\documentclass[11pt]{amsart}
\usepackage[utf8]{inputenc}
\usepackage{kantlipsum}
\usepackage{comment}
\setlength{\textwidth}{\paperwidth}
\addtolength{\textwidth}{-2.5in}
\calclayout
\newif\ifdraft
\draftfalse
% Fonts
\usepackage{amsmath,amsfonts,amsthm,mathrsfs}
\usepackage{amssymb, stmaryrd}

% Hyperlinks
\usepackage[unicode,bookmarks]{hyperref}
\usepackage[usenames,dvipsnames]{xcolor}
\hypersetup{colorlinks=true,citecolor=NavyBlue,linkcolor=Brown,urlcolor=Orange}

% Bibliography
\usepackage[alphabetic,initials]{amsrefs}

%\usepackage{ifthen}

% Enumerate
\usepackage{enumitem}

% Counters etc.
\usepackage{chngcntr}

% Display keys (disable with option final)
\ifdraft
\usepackage[notcite,notref,color]{showkeys}

\definecolor{labelkey}{gray}{0.5}
\fi

% Improved commutative diagrams
\usepackage{tikz}

\usepackage{tikz-cd}
%\tikzset{commutative diagrams/arrow style=math font}

\usetikzlibrary{matrix,arrows}
\newlength{\myarrowsize} 

% Version similar to Computer Modern
\pgfarrowsdeclare{cmto}{cmto}{
	\pgfsetdash{}{0pt} 
	\pgfsetbeveljoin 
	\pgfsetroundcap 
	\setlength{\myarrowsize}{0.6pt}
	\addtolength{\myarrowsize}{.5\pgflinewidth}
	\pgfarrowsleftextend{-4\myarrowsize-.5\pgflinewidth} 
	\pgfarrowsrightextend{.8\pgflinewidth}
}{
	\setlength{\myarrowsize}{0.6pt} 
  	\addtolength{\myarrowsize}{.5\pgflinewidth}  
	\pgfsetlinewidth{0.5\pgflinewidth}
	\pgfsetroundjoin
	% top half
	\pgfpathmoveto{\pgfpoint{1.5\pgflinewidth}{0}}
	\pgfpatharc{-109}{-170}{4\myarrowsize}
	\pgfpatharc{10}{189}{0.58\pgflinewidth and 0.2\pgflinewidth}
	\pgfpatharc{-170}{-115}{4\myarrowsize+\pgflinewidth}
	\pgfpathclose
	\pgfusepathqfillstroke
	% bottom half
	\pgfpathmoveto{\pgfpoint{1.5\pgflinewidth}{0}}
	\pgfpatharc{109}{170}{4\myarrowsize}
	\pgfpatharc{-10}{-189}{0.58\pgflinewidth and 0.2\pgflinewidth}
	\pgfpatharc{170}{115}{4\myarrowsize+\pgflinewidth}
	\pgfpathclose
	\pgfusepathqfillstroke
	% Change line width back
	\pgfsetlinewidth{2\pgflinewidth}
}

\pgfarrowsdeclare{cmonto}{cmonto}{
	\pgfsetdash{}{0pt} 
	\pgfsetbeveljoin 
	\pgfsetroundcap 
	\setlength{\myarrowsize}{0.6pt}
	\addtolength{\myarrowsize}{.5\pgflinewidth}
	\pgfarrowsleftextend{-4\myarrowsize-.5\pgflinewidth} 
	\pgfarrowsrightextend{.8\pgflinewidth}
}{
	\setlength{\myarrowsize}{0.6pt} 
  	\addtolength{\myarrowsize}{.5\pgflinewidth}  
	\pgfsetlinewidth{0.5\pgflinewidth}
	\pgfsetroundjoin
	% top half
	\pgfpathmoveto{\pgfpoint{1.5\pgflinewidth}{0}}
	\pgfpatharc{-109}{-170}{4\myarrowsize}
	\pgfpatharc{10}{189}{0.58\pgflinewidth and 0.2\pgflinewidth}
	\pgfpatharc{-170}{-115}{4\myarrowsize+\pgflinewidth}
	\pgfpathclose
	\pgfusepathqfillstroke
	% bottom half
	\pgfpathmoveto{\pgfpoint{1.5\pgflinewidth}{0}}
	\pgfpatharc{109}{170}{4\myarrowsize}
	\pgfpatharc{-10}{-189}{0.58\pgflinewidth and 0.2\pgflinewidth}
	\pgfpatharc{170}{115}{4\myarrowsize+\pgflinewidth}
	\pgfpathclose
	\pgfusepathqfillstroke
	% top half (2)
	\pgfpathmoveto{\pgfpoint{1.5\pgflinewidth-0.3em}{0}}
	\pgfpatharc{-109}{-170}{4\myarrowsize}
	\pgfpatharc{10}{189}{0.58\pgflinewidth and 0.2\pgflinewidth}
	\pgfpatharc{-170}{-115}{4\myarrowsize+\pgflinewidth}
	\pgfpathclose
	\pgfusepathqfillstroke
	% bottom half (2)
	\pgfpathmoveto{\pgfpoint{1.5\pgflinewidth-0.3em}{0}}
	\pgfpatharc{109}{170}{4\myarrowsize}
	\pgfpatharc{-10}{-189}{0.58\pgflinewidth and 0.2\pgflinewidth}
	\pgfpatharc{170}{115}{4\myarrowsize+\pgflinewidth}
	\pgfpathclose
	\pgfusepathqfillstroke
	% Change line width back
	\pgfsetlinewidth{2\pgflinewidth}
}

\pgfarrowsdeclare{cmhook}{cmhook}{
	\pgfsetdash{}{0pt} 
	\pgfsetbeveljoin 
	\pgfsetroundcap 
	\setlength{\myarrowsize}{0.6pt}
	\addtolength{\myarrowsize}{.5\pgflinewidth}
	\pgfarrowsleftextend{-4\myarrowsize-.5\pgflinewidth} 
	\pgfarrowsrightextend{.8\pgflinewidth}
}{
	\setlength{\myarrowsize}{0.6pt} 
  	\addtolength{\myarrowsize}{.5\pgflinewidth}  
 	\pgfsetdash{}{0pt}
	\pgfsetroundcap
	\pgfpathmoveto{\pgfqpoint{0pt}{-4.667\pgflinewidth}}
	\pgfpathcurveto
    {\pgfqpoint{4\pgflinewidth}{-4.667\pgflinewidth}}
    {\pgfqpoint{4\pgflinewidth}{0pt}}
    {\pgfpointorigin}
	\pgfusepathqstroke
}

% #1 = column sep = 2em
% #2 = row sep = 2.5em

\newenvironment{diagram*}[2]{%
\[%
\begin{tikzpicture}[>=cmto,baseline=(current bounding box.center),%
	to/.style={->,font=\scriptsize,cap=round},%
	into/.style={cmhook->,font=\scriptsize,cap=round},%
	onto/.style={-cmonto,font=\scriptsize,cap=round},%
	math/.style={matrix of math nodes, row sep=#2, column sep=#1,%
		text height=1.5ex, text depth=0.25ex}]%
}{%
\end{tikzpicture}%
\]% 
\ignorespacesafterend%
}

% Example of a diagram
% 
% \begin{diagram}{2.5em}{2em}
% \matrix[math] (m) { A & B & C & D \\ }; 
% \path[into] (m-1-1) edge (m-1-2);
% \path[to] (m-1-2) edge node[auto] {$f$} (m-1-3);
% \path[onto] (m-1-3) edge (m-1-4);
% \end{diagram}

% For testing
% \begin{diagram}{2.5em}{2em}
% \matrix[math] (m) { A & \hspace{-1.29em} \raisebox{0.19pt}{$\to$} B \\ };
% \path[to] (m-1-1) edge[red] (m-1-2);
% \end{diagram}

% D-modules and MHM

% Derived category

\newcommand{\cohH}{\mathcal{H}}

% Some basic analysis

%\renewcommand{\dbar}{\bar{\partial}}

% Spaces

% Various sets

% Derivatives, vector fields etc.

% Sets

% Operators

% Defining terms (in text)

% Lie algebras

% Groups

% Sheaves

% Other things

% putting a bar over something (in math mode)
% #2 = shorten the bar by this much
% #3 = move bar to right by this much

\def\overbar#1#2#3{{%
	\setbox0=\hbox{\displaystyle{#1}}%
	\dimen0=\wd0
	\advance\dimen0 by -#2 
	\vbox {\nointerlineskip \moveright #3 \vbox{\hrule height 0.3pt width \dimen0}%
		\nointerlineskip \vskip 1.5pt \box0}%
}}

% Disks etc.

% Arrows

% Other commands

% Print \subsection numbers in bf
\makeatletter
\let\@@seccntformat\@seccntformat
\renewcommand*{\@seccntformat}[1]{%
  \expandafter\ifx\csname @seccntformat@#1\endcsname\relax
    \expandafter\@@seccntformat
  \else
    \expandafter
      \csname @seccntformat@#1\expandafter\endcsname
  \fi
    {#1}%
}
\newcommand*{\@seccntformat@subsection}[1]{%
  \textbf{\csname the#1\endcsname.}
}
\makeatother

% Print paragraph headings in italic
\makeatletter
\let\@paragraph\paragraph
\renewcommand*{\paragraph}[1]{%
	\vspace{0.3\baselineskip}%
	\@paragraph{\textit{#1}}%
}
\makeatother

% Commands for section numbering
\counterwithin{equation}{subsection}
\counterwithout{subsection}{section}
\counterwithin{figure}{subsection}

% Number everything consistently
\newtheorem{theorem}[equation]{Theorem}
\newtheorem*{theorem*}{Theorem}
\newtheorem{lemma}[equation]{Lemma}
\newtheorem*{lemma*}{Lemma}
\newtheorem{corollary}[equation]{Corollary}
\newtheorem{proposition}[equation]{Proposition}
\newtheorem*{proposition*}{Proposition}

\newtheorem{claim}[equation]{Claim}
\theoremstyle{definition}
\newtheorem{definition}[equation]{Definition}
\newtheorem*{definition*}{Definition}
\newtheorem{remark}[equation]{Remark}

\newtheorem{example}[equation]{Example}
\newtheorem*{example*}{Example}
\newtheorem*{problem*}{Problem}

\theoremstyle{plain}

% Nicer hyperlinks
\newcommand{\theoremref}[1]{\hyperref[#1]{Theorem~\ref*{#1}}}
\newcommand{\lemmaref}[1]{\hyperref[#1]{Lemma~\ref*{#1}}}
\newcommand{\definitionref}[1]{\hyperref[#1]{Definition~\ref*{#1}}}
\newcommand{\propositionref}[1]{\hyperref[#1]{Proposition~\ref*{#1}}}
\newcommand{\conjectureref}[1]{\hyperref[#1]{Conjecture~\ref*{#1}}}
\newcommand{\corollaryref}[1]{\hyperref[#1]{Corollary~\ref*{#1}}}
\newcommand{\exampleref}[1]{\hyperref[#1]{Example~\ref*{#1}}}
\newcommand{\setupref}[1]{\hyperref[#1]{Set-up~\ref*{#1}}}
\newcommand{\remarkref}[1]{\hyperref[#1]{Remark~\ref*{#1}}}
\newcommand{\claimref}[1]{\hyperref[#1]{Claim~\ref*{#1}}}
\newcommand{\figureref}[1]{\hyperref[#1]{Figure~\ref*{#1}}}

% Number \figure using equation
\makeatletter
\let\old@caption\caption
\renewcommand*{\caption}[1]{%
	\setcounter{figure}{\value{equation}}%
	\stepcounter{equation}%
	\old@caption{#1}\relax%
}
\makeatother

% For the introduction
\newcounter{intro}

\newtheorem{intro-conjecture}[intro]{Conjecture}
\newtheorem{intro-corollary}[intro]{Corollary}
\newtheorem{intro-theorem}[intro]{Theorem}

% Other macros

% Sectioning
%\newcommand{\newpar}[1]{\subsection{\texorpdfstring{}{}}}

\newcommand{\parref}[1]{\hyperref[#1]{\S\ref*{#1}}}

% Nice command for \widebar
\makeatletter
\newcommand*\if@single[3]{%
  \setbox0\hbox{${\mathaccent"0362{#1}}^H$}%
  \setbox2\hbox{${\mathaccent"0362{\kern0pt#1}}^H$}%
  \ifdim\ht0=\ht2 #3\else #2\fi
  }
%The bar will be moved to the right by a half of \macc@kerna, which is computed by amsmath:
\newcommand*\rel@kern[1]{\kern#1\dimexpr\macc@kerna}
%If there's a superscript following the bar, then no negative kern may follow the bar;
%an additional {} makes sure that the superscript is high enough in this case:
\newcommand*\widebar[1]{\@ifnextchar^{{\wide@bar{#1}{0}}}{\wide@bar{#1}{1}}}
%Use a separate algorithm for single symbols:
\newcommand*\wide@bar[2]{\if@single{#1}{\wide@bar@{#1}{#2}{1}}{\wide@bar@{#1}{#2}{2}}}
\newcommand*\wide@bar@[3]{%
  \begingroup
  \def\mathaccent##1##2{%
%If there's more than a single symbol, use the first character instead (see below):
    \if#32 \let\macc@nucleus\first@char \fi
%Determine the italic correction:
    \setbox\z@\hbox{$\macc@style{\macc@nucleus}_{}$}%
    \setbox\tw@\hbox{$\macc@style{\macc@nucleus}{}_{}$}%
    \dimen@\wd\tw@
    \advance\dimen@-\wd\z@
%Now \dimen@ is the italic correction of the symbol.
    \divide\dimen@ 3
    \@tempdima\wd\tw@
    \advance\@tempdima-\scriptspace
%Now \@tempdima is the width of the symbol.
    \divide\@tempdima 10
    \advance\dimen@-\@tempdima
%Now \dimen@ = (italic correction / 3) - (Breite / 10)
    \ifdim\dimen@>\z@ \dimen@0pt\fi
%The bar will be shortened in the case \dimen@<0 !
    \rel@kern{0.6}\kern-\dimen@
    \if#31
      \overline{\rel@kern{-0.6}\kern\dimen@\macc@nucleus\rel@kern{0.4}\kern\dimen@}%
      \advance\dimen@0.4\dimexpr\macc@kerna
%Place the combined final kern (-\dimen@) if it is >0 or if a superscript follows:
      \let\final@kern#2%
      \ifdim\dimen@<\z@ \let\final@kern1\fi
      \if\final@kern1 \kern-\dimen@\fi
    \else
      \overline{\rel@kern{-0.6}\kern\dimen@#1}%
    \fi
  }%
  \macc@depth\@ne
  \let\math@bgroup\@empty \let\math@egroup\macc@set@skewchar
  \mathsurround\z@ \frozen@everymath{\mathgroup\macc@group\relax}%
  \macc@set@skewchar\relax
  \let\mathaccentV\macc@nested@a
%The following initialises \macc@kerna and calls \mathaccent:
  \if#31
    \macc@nested@a\relax111{#1}%
  \else
%If the argument consists of more than one symbol, and if the first token is
%a letter, use that letter for the computations:
    \def\gobble@till@marker##1\endmarker{}%
    \futurelet\first@char\gobble@till@marker#1\endmarker
    \ifcat\noexpand\first@char A\else
      \def\first@char{}%
    \fi
    \macc@nested@a\relax111{\first@char}%
  \fi
  \endgroup
}
\makeatother

\DeclareMathOperator{\Gr}{Gr}
\newtheorem{set-up}[equation]{Set-up}

\usepackage{tikz-3dplot}
\definecolor{gray(x11gray)}{rgb}{0.75, 0.75, 0.75}
\definecolor{aliceblue}{rgb}{0.94, 0.97, 1.0}
\usepackage{float}
\usepackage[normalem]{ulem}

%%%%%%%%%%%%%%%%%%%%%

\begin{document}

\vspace{\baselineskip}

\title[Singularities of secant varieties from a Hodge theoretic perspective]{Singularities of secant varieties from a Hodge theoretic perspective}

\author[S.~Olano]{Sebasti\'an~Olano}
\address{Department of Mathematics, University of Toronto, 40 St. George St., Toronto, Ontario 
Canada, M5S 2E4}
\email{{\tt seolano@math.toronto.edu}}

\author[D.~Raychaudhury]{Debaditya Raychaudhury}
\address{Department of Mathematics, University of Arizona, 617 N. Santa Rita Ave., Tucson, Arizona 85721,
USA}
\email{draychaudhury@math.arizona.edu}

\author[L.~Song]{Lei Song}
\address{School of Mathematics, Sun Yat-sen University, No.~135 Xingang Xi Road, Guangzhou, Guangdong 510275, P. R. China}
\email{songlei3@mail.sysu.edu.cn}

\thanks{}

\subjclass[2020]{14J17, 14N07}
\keywords{Secant varieties, higher Du Bois, and higher rational singularities.}

\maketitle

\vspace{-20pt}

\begin{abstract}
    We study the singularities of secant varieties of smooth projective varieties using methods from birational geometry when the embedding line bundle is sufficiently positive. More precisely, we study the Du Bois complex of secant varieties and its relationship with the sheaves of \textcolor{black}{differential} forms. Through this analysis, we give a necessary and sufficient condition for these varieties to have $p$-Du Bois singularities (in a sense that was proposed in \cite{SVV}). In addition, we show that the singularities of these varieties are never higher rational, by giving a classification of the cases when they are pre-$1$-rational. From these results, we deduce several consequences, including a Kodaira-Akizuki-Nakano type vanishing result for the reflexive differential forms of the secant varieties. 
\end{abstract}

\section{Introduction}

Secant varieties have been vastly studied in the literature. In particular, there has been a great deal of interest in understanding their defining equations and syzygies as well as their singularities (\cite{ENP, Rai, SV1, SV, Ver, V2, V1} and the references therein). The research on these varieties is also partly motivated by topics in algebraic statistics and algebraic complexity theory (\cite{SS, LW}). A few years ago, under some positivity conditions on the embedding line bundle, \cite{Ull16} had shown the normality of these varieties, thereby completing some results of Vermeire.
More recently, the singularities of the secant varieties were shown to be Du Bois, which is an important class of singularities (see \cite{KS2} for a survey), in \cite{CS18}, and also a characterization for when these are rational was given.

\vspace{5pt}

Very recently, the notions of Du Bois and rational singularities have been extended substantially in a series of papers \cite{MPOW, friedmanlaza22, jksy22, KL, SVV}\textcolor{black}{, and in particular, the notions of higher Du Bois and higher rational singularities have emerged}. For this reason, it is natural to ask: {\it to which of these newly defined singularity classes do the secant varieties belong?} \textcolor{black}{Although these generalized notions of $p$-Du Bois singularities are defined via some conditions imposed on the first $p$ associated graded pieces of the Du Bois complex and require $p$ to be in an admissible range (based on the codimension of the singular locus of the variety under consideration), our study of secant varieties in this article will often go beyond that, and we will show a similar behavior on the rest of the graded pieces.}

\vspace{5pt}

To set up the context, we start with a smooth projective variety $X$ of dimension $n$ and a very ample line bundle $L$ on $X$. We denote by $\Sigma=\Sigma(X,L)$ the secant variety of $X$ with respect to $L$. Under a mild assumption on the positivity of $L$, which requires $L$ to be $3$-very ample (see \definitionref{defva}), $\dim \Sigma=2n+1$ and the singular locus of $\Sigma$ is $X\subset \Sigma$ (see \propositionref{stronglog}). 

\vspace{5pt}

In \cite{Ull16} and \cite{CS18}, the singularities of the secant varieties were studied when $X$ is embedded by a sufficiently positive linear series. In this article, the positivity of $L$ is measured through the integer $p$ for which it satisfies the $(Q_p)$-property, see \definitionref{qp}. This property, for a 3-very ample line bundle $L$, is defined through properties $(Q1)$, $(Q2)$ (which have no bearing on $p$) and $(Q3_p)$. It is straightforward to see from the definition that if $L$ satisfies $(Q_p)$-property (more precisely $(Q3_p)$) for some $p\geq 0$, then it satisfies $(Q_k)$-property (more precisely $(Q3_k)$) for all $0\leq k\leq p$. Also, if $L$ satisfies $(Q_n)$-property (resp. $(Q3_n)$), then it satisfies $(Q_p)$-property (resp. $(Q3_p)$) for all $p\geq 0$.

\vspace{5pt}

It turns out that $L$ satisfies $(Q_0)$-property if one of the following holds:
\begin{itemize}
    \item $n=1$ and $\textrm{deg}(L)\geq 2g+3$ where $g$ is the genus of $X$ (in fact, in this case $L$ satisfies $(Q_p)$-property for all $p\geq 0$); or 
    \item $n\geq 2$ and $L=K_X+dA+B$ with $d\geq 2n+2$ where $A$ and $B$ are very ample and nef line bundles respectively. 
\end{itemize}
The main results of Ullery and Chou--Song can be stated as follows:

\begin{theorem}[\cite{Ull16, CS18}] 
Assume $L$ satisfies $(Q_0)$-property. Then:
    \begin{itemize}
        \item[(1)] $\Sigma$ is normal, and its singularities are Du Bois.
        \item[(2)] $\Sigma$ has rational singularities if and only if $H^i(\mathcal{O}_X)=0$ for all $i>0$.\footnote{More precisely, assuming $L$ is 3-very ample, \cite{Ull16} showed the normality of $\Sigma$ when $L$ satisfies $(Q1)$, and \cite{CS18} showed that $\Sigma$ has Du Bois singularities and the assertion (2) when $L$ satisfies $(Q1)$ and $(Q3_0)$.}
    \end{itemize}
\end{theorem}

The present work is devoted to understanding the singularities of $\Sigma$, where we assume that $L$ satisfies $(Q_p)$-property for some $p\geq 0$. Roughly speaking, the larger $p$ for which $L$ satisfies $(Q_p)$-property implies the better singularities of $\Sigma$.

\vspace{5pt}

As for concrete examples of line bundles $L$ that satisfy this property, it turns out that there are explicit functions $f(p,n)$ and $g(l,p,n)$ such that the pluri-adjoint linear series $lK_X+dA+B$ where $A$ is a very ample line bundle and $B$ is a nef line bundle satisfies $(Q_p)$-property if  $l\geq f(p,n)$ and $d\geq g(l,p,n)$, see \theoremref{pluriqp}. In particular, $lK_X+dA+B$ satisfies $(Q_p)$-property for all $p\geq 0$ if $d\gg l\gg 0$. For the convenience of the reader, here we point out special cases of \theoremref{pluriqp}: for $n\geq 2$, the line bundle 
$$L = 2K_X + dA + B$$ satisfies $(Q_1)$-property, when $d\geq 3n +4$ (if $n\geq 3$, then $d = 3n+3$ also satisfies the condition). 
Moreover, for $n\geq 2$ and $1\leq p\leq n$, the line bundle
\small
$$L=\begin{cases} 
      \left[\binom{n-1}{p-1}+1\right]K_X+\left[\left(\binom{n-1}{p-1}+1\right)(n+2)+2p\left(\binom{n}{\lfloor\frac{n}{2}\rfloor}+1\right)\right]A+B & \textrm{if } p-1\leq \lfloor\binom{n-1}{2}\rfloor; \\[5pt]
      \left[\binom{n-1}{\lfloor\frac{n-1}{2}\rfloor}+1\right]K_X+\left[\left(\binom{n-1}{\lfloor\frac{n-1}{2}\rfloor}+1\right)(n+2)+2p\left(\binom{n}{\lfloor\frac{n}{2}\rfloor}+1\right)\right]A+B & \textrm{otherwise}
   \end{cases}$$
\normalsize
satisfies $(Q_p)$-property.\\

\noindent\textbf{Higher Du Bois singularities.} We now recall that associated to any complex variety $Z$ there is the Du Bois complex $\underline{\Omega}^{\bullet}_Z$, introduced in \cite{dubois81}, which is an object in the derived category of filtered complexes on $Z$. The associated graded objects $$\underline{\Omega}^{k}_Z := \Gr_F^k\underline{\Omega}^{\bullet}_Z[k]$$ are objects in the derived category of coherent sheaves. A variety $Z$ is said to have Du Bois singularities if the canonical morphism $\mathcal{O}_Z \to \underline{\Omega}^{0}_Z$ is a quasi-isomorphism. This notion has been generalized by requiring the canonical morphisms $\Omega^k_Z \to \underline{\Omega}^{k}_Z$ to be quasi-isomorphisms for $0\leq k\leq p$ as well, and varieties that satisfy this property are said to have $p$-Du Bois singularities. The condition is well-behaved for varieties whose singularities are local complete intersections (see \cite{MP,MP22}) but usually fails otherwise, as is the case of secant varieties \textcolor{black}{as they are in general not even Cohen-Macaulay (\cite[Theorem 1.3]{CS18}).}

\vspace{5pt}

For this reason, a new definition of varieties having $p$-Du Bois singularities was proposed in \cite{SVV}, which generalizes the previously described notion for varieties with local complete intersection singularities. Our first result is about a vanishing condition on the cohomology of the Du Bois complexes. Following \cite{SVV}, we say that a variety has {\it pre-$p$-Du Bois singularities} if these complexes are concentrated in degree zero for $0\leq k\leq p$ (see \definitionref{defpredb}). When $L$ satisfies $(Q_n)$-property, $\Sigma$ satisfies this for every non-negative integer $p$. In fact, we have:

\begin{intro-theorem}\label{thm1} Let $p\in\mathbb{N}$ and assume $L$ satisfies $(Q_p)$-property. Then the natural maps $$\mathcal{H}^0(\underline{\Omega}^k_{\Sigma})\to \underline{\Omega}^k_{\Sigma}$$ are quasi-isomorphisms for all $0\leq k\leq p$; in particular, the singularities of $\Sigma$ are pre-$p$-Du Bois.
\end{intro-theorem}

We remark that the conclusion of the above holds more generally when $L$ is 3-very ample and satisfies $(Q3_p)$ as our proof shows.

\vspace{5pt}

In addition to requiring $Z$ to have pre-$p$-Du Bois singularities, following \cite{SVV}, a variety is said to have $p$-Du Bois singularities if two extra conditions are satisfied. The first is a codimension condition on the singular locus, and the second is a condition in degree zero for $\underline{\Omega}^{k}_Z$ when $0\leq k\leq p$ (see \definitionref{defDB}). Secant varieties satisfy the first condition up to a certain range; and in relation to the last condition, we show the following where we write $\Omega_{\Sigma}^{[k]}$ for the reflexive hull of $\Omega_{\Sigma}^{k}$:

\begin{intro-theorem}\label{thm2.3} Let $p$ be a positive integer and assume $L$ satisfies $(Q_p)$-property. Then the natural maps $$\delta_k:\cohH^0(\underline{\Omega}^{k}_{\Sigma})\to\Omega_{\Sigma}^{[k]}$$ are isomorphisms for $1\leq k \leq p$ if and only if $H^k(\mathcal{O}_{X})=0$ for $1\leq k\leq p$.
\end{intro-theorem} 

As before, we remark here that if $\Sigma$ is normal, then its singularities are Du Bois and the conclusion of \theoremref{thm2.3} holds if $L$ is 3-very ample and satisfies $(Q3_p)$.

\vspace{5pt}

We now introduce a useful invariant $$\nu(X):=\max\left\{i\mid 1\leq i\leq n-1\,\textrm{ such that }\, H^j(\mathcal{O}_X)=0\,\textrm{ for all }\, 1\leq j\leq i\right\},$$ where we set the convention $\nu(X)=0$ if the set above is empty. The importance of this invariant was observed in \cite{CS18}; it follows from {\it loc. cit.} that if $L$ satisfies $(Q_0)$-property, then $\Sigma$ is Cohen-Macaulay if and only if $\nu(X)=n-1$ (notice that this always holds when $n=1$), and it has rational singularities if and only if $\nu(X)=n-1$ and $H^n(\mathcal{O}_X)=0$.
The following is an immediate consequence of \theoremref{thm1} and \theoremref{thm2.3}. 

\begin{intro-corollary}\label{corpdb} Let $p\in\mathbb{N}$ with $p\leq\lfloor\frac{n}{2}\rfloor$, and assume $L$ satisfies $(Q_p)$-property.
Then $\Sigma$ has $p$-Du Bois singularities if and only if $p\leq \nu(X)$.
\end{intro-corollary}

When $L$ is $3$-very ample, the codimension of the singular locus of $\Sigma$ is $n+1$ unless $\Sigma$ is smooth, whence it follows from the definition of $p$-Du Bois singularity and \propositionref{stronglog}, that in this case $\Sigma$ has $p$-Du Bois singularity for some $p>\lfloor\frac{n}{2}\rfloor$ if and only if $\Sigma=\mathbb{P}(H^0(L))$.

\vspace{5pt}

We further study the morphisms $\delta_k:\mathcal{H}^0(\underline{\Omega}^k_{\Sigma})\to \Omega_{\Sigma}^{[k]}$ and in \theoremref{thm2}, we also discuss the case for higher degrees not considered in the definition of $p$-Du Bois singularities. 
Recall that a variety $Z$ is said to have weakly rational singularities if the Grauert-Riemenschneider sheaf $\omega_Z^{\textrm{GR}}$, which is by definition the push-forward of the canonical bundle from a resolution of singularities of $Z$, is reflexive. When $L$ satisfies $(Q_0)$-property, \cite[Theorem 1.4]{CS18} shows that the natural map $$\omega_{\Sigma}^{\textrm{GR}}\hookrightarrow \omega_{\Sigma}:=\mathcal{H}^{-2n-1}(\omega_{\Sigma}^{\bullet})$$ is an isomorphism if and only if $H^n(\mathcal{O}_X)=0$ (throughout this article, we denote the dualizing complex of a variety $Z$ by $\omega_Z^{\bullet}$; in particular if $Z$ is smooth or even Cohen-Macaulay, we have $\omega_Z^{\bullet}=\omega_Z[\dim Z]$ where $\omega_Z$ is the canonical sheaf). Notice that under our set-up, the map above is an isomorphism if and only if the singularities of $\Sigma$ are weakly rational as $\Sigma$ is normal and $\omega_{\Sigma}$ is reflexive. When $L$ satisfies $(Q_n)$-property, {\it loc. cit.} combined with \theoremref{thm2} (1) establishes the equivalences:
\begin{center}
    $\Sigma$ has weakly rational singularities $\iff H^n(\mathcal{O}_X)=0\iff \delta_{2n+1}$ is an isomorphism\\ $\iff \delta_{2n}$ is an isomorphism.
\end{center}

\vspace{10pt}

\noindent\textbf{Higher rational singularities.} We discuss next the extension of the notion of rational singularities. Recall that a variety $Z$ is said to have rational singularities if for a resolution of singularities $f: \widetilde{Z}\to Z$, the canonical morphism $\mathcal{O}_Z\to {\bf R}f_*\mathcal{O}_{\widetilde{Z}}$ is a quasi-isomorphism. Intrinsically, this is equivalent to requiring that the morphism $\mathcal{O}_Z\to{\bf D}_Z(\underline{\Omega}_Z^{\dim Z})$ is a quasi-isomorphism, where ${\bf D}_Z(-)$ is the Grothendieck dual. Analogous to the definition of pre-$p$-Du Bois singularities, a variety $Z$ is said to have {\it pre-$p$-rational singularities} if the complexes ${\bf D}_Z(\underline{\Omega}_Z^{\dim Z-k})$ are concentrated in degree zero for any $0\leq k\leq p$ (see \definitionref{defprerat}). When $p<\textrm{codim}_Z(Z_{\textrm{sing}})$ and $f:\Tilde{Z}\to Z$ is a strong log resolution with $E:=f^{-1}{(Z_{\textrm{sing}})}_{\textrm{red}}$ a simple normal crossing divisor (see \definitionref{defsl}), this condition is equivalent to requiring that the complexes ${\bf R}f_*\Omega^k_{\widetilde{Z}}(\log E)$ are concentrated in degree zero for $0\leq k\leq p$. It was shown in \cite[Theorem B]{SVV} that for a normal variety, pre-$p$-rational singularities are pre-$p$-Du Bois. However, it turns out that the singularities of secant varieties are almost never pre-$1$-rational: 

\begin{intro-theorem}\label{prerat}
Assume $L$ satisfies $(Q_1)$-property. Then $\Sigma$ has pre-$1$-rational singularities if and only if $X\subset\mathbb{P}(H^0(L))$ is a rational normal curve of degree $\geq 3$.
\end{intro-theorem}

The above result is a consequence of a more general fact that when $L$ satisfies $(Q_1)$-property, we have $\mathcal{H}^1({\bf D}_{\Sigma}(\underline{\Omega}_{\Sigma}^{\dim\Sigma-1}))\neq 0$.

\vspace{5pt}

Following \cite{SVV}, a variety is said to have $p$-rational singularities if $Z$ is normal, has pre-$p$-rational singularities, and additionally satisfies a codimension condition of the singular locus (see \definitionref{defhr}). 
By \propositionref{stronglog}, this last condition is never satisfied when $X\subset\mathbb{P}(H^0(L))$ is a rational normal curve of degree $\geq 4$.

\vspace{5pt}

It is worth mentioning that, unlike the local complete intersection case, where $p$-Du Bois singularities are $(p-1)$-rational (\cite{CDM, MP22, FL}), our study produces secant varieties whose singularities are $p$-Du Bois for some $p\geq 2$ but not even pre-$1$-rational. This feature is also shared by the singularities of non-simplicial affine toric varieties, see \cite[Proposition E]{SVV}. We also obtain secant varieties whose singularities are $p$-Du Bois for large $p$ but not rational, a feature that is shared by the singularities of certain affine cones over smooth projective varieties, see \cite[Proposition F]{SVV}. \\

\noindent\textbf{Some examples.} We provide concrete examples to highlight the scope of our results:

\begin{example}[Curves]
Let $C\subset\mathbb{P}^N$ be a smooth curve of genus $g$, embedded by the complete linear series of a line bundle $L$ with $\textrm{deg}(L)\geq 2g+3$. Then:
\begin{itemize}
    \item $\Sigma$ is normal, Cohen-Macaulay, and has Du Bois singularities. It has weakly rational singularities $\iff$ it has rational singularities $\iff$ $g=0$.
    \item The singularities of $\Sigma$ are pre-$p$-Du Bois for all $p\geq 0$. They are pre-$1$-rational if and only if $g=0$. 
    \item The singularities are not $p$-Du Bois for any $p\geq 1$ unless $\Sigma=\mathbb{P}^N$ (one can show that this happens if and only if $C\subset\mathbb{P}^N$ is a twisted cubic, i.e., $(C,L)\cong (\mathbb{P}^1,\mathcal{O}_{\mathbb{P}^1}(3))$).
\end{itemize}
\end{example}

\begin{example}[Higher dimensions] Let $X\subset\mathbb{P}^N$ be a smooth projective variety of dimension $n\geq 2$, embedded by the complete linear series of $$L:=\left[\binom{n-1}{\lfloor\frac{n-1}{2}\rfloor}+1\right]K_X+\left[\left(\binom{n-1}{\lfloor\frac{n-1}{2}\rfloor}+1\right)(n+2)+2n\left(\binom{n}{\lfloor\frac{n}{2}\rfloor}+1\right)\right]A+B$$ where $A$ and $B$ are very ample and nef line bundles respectively. Then $\Sigma$ is normal and has Du Bois singularities. Moreover, the singularities of $\Sigma$ has the following properties:
\begin{itemize}
    \item They are pre-$p$-Du Bois for all $p\geq 0$. However, they are never pre-$1$-rational.
    \item If $X$ is rationally connected, then the singularities are rational and $\lfloor\frac{n}{2}\rfloor$-Du Bois.
    \item If $X$ is Calabi-Yau in the strong sense (i.e., $H^i(\mathcal{O}_X)=0$ for all $1\leq i\leq n-1$) then they are $\lfloor\frac{n}{2}\rfloor$-Du Bois, but not rational. In this case, $\Sigma$ is Cohen-Macaulay, but not weakly rational. 
    \item If $X$ is hyper-K\"ahler (recall that they live in even dimensions), then they are $1$-Du Bois, but not $2$-Du Bois. In this case, $\Sigma$ is Cohen-Macaulay if and only if $X$ is a K3 surface. However, the singularities of $\Sigma$ are not weakly rational. 
\end{itemize}
\end{example}

One could also obtain many other examples with various features. \\

\noindent\textbf{Consequences.} As the associated graded objects $\underline{\Omega}_Z^k$ of the Du Bois complex of a variety $Z$ are generalizations of the sheaf of $k$-forms in the smooth case, we can use \theoremref{thm1} and \theoremref{thm2.3} to prove a Kodaira-Akizuki-Nakano type vanishing theorem:

\begin{intro-corollary}[Analogue of Kodaira-Akizuki-Nakano vanishing theorem]\label{nakano}
    Let $p$ be a positive integer and assume $L$ satisfies $(Q_p)$-property. Let $\mathcal{L}$ be an ample line bundle on $\Sigma$. If $H^k(\mathcal{O}_{X})=0$ for $1\leq k\leq p$, then $$H^q(\Omega_{\Sigma}^{[p]}\otimes \mathcal{L})=0\,\textrm{ when $p+q>\dim\Sigma=2n+1$}.$$
\end{intro-corollary}

We also apply our results to obtain consequences for $h$-differentials, an introduction to which can be found in \cite{HJ}. It has been proven in \cite{KS} that if $Z$ is a variety with rational singularities, then 
$\Omega^p_h|_{Z}\cong\Omega_{Z}^{[p]}$ for all $p$. Here we obtain the isomorphisms $\Omega^p_h|_{\Sigma}\cong\Omega_{\Sigma}^{[p]}$ up to a certain range of $p$, even when the singularities of $\Sigma$ are not rational:

\begin{intro-corollary}[Description of the $h$-differentials]\label{corhdiff}
    Let $p$ be a positive integer and assume $L$ satisfies $(Q_p)$-property. If $H^k(\mathcal{O}_{X})=0$ for $1\leq k\leq p$, then then there is a natural isomorphism $\Omega^p_h|_{\Sigma}\cong\Omega_{\Sigma}^{[p]}$.
\end{intro-corollary}   

The previous two results are special cases of \corollaryref{nakano'} and \corollaryref{h'}. Finally, pre-$1$-rational singularities of rational normal curves of degree $\geq 3$ give consequences for the Hodge-Du Bois numbers of their secant varieties:

\begin{intro-corollary}[Symmetry of Hodge-Du Bois numbers]\label{hodgesym}
    Let $X\subset \mathbb{P}^{c+1}$ be a rational normal curve of degree $\geq 3$. Then 
    \begin{equation}\label{hdb}
        \underline{h}^{p,q}(\Sigma)=\underline{h}^{q,p}(\Sigma)=\underline{h}^{3-p,3-q}
    \end{equation} for all $0\leq p,q\leq 3$, where $\underline{h}^{p,q}(\Sigma):=\dim\mathbb{H}^q(\Sigma,\underline{\Omega}_{\Sigma}^p)$.
\end{intro-corollary}

As a concluding note, we remark that there is a third measure of singularities that is natural to consider, which is the local cohomological dimension $\textrm{lcd}(\mathbb{P}^N,\Sigma)$ of $\Sigma$ inside $\mathbb{P}^N:=\mathbb{P}(H^0(L))$, and to study the filtrations on the local cohomology sheaves along the direction of \cite{MP}. This will be the topic of a future study.

\vspace{5pt}

The structure of this article can be summarized as follows: Sect. \ref{ch1} is divided in three parts, \S \ref{prelimsing}, \S \ref{prelims} and \S \ref{secpl}. In \S \ref{prelimsing}, we provide a brief review on higher Du Bois and higher rational singularities, and in \S \ref{prelims}, we recall the basics on secant varieties. The objective of \S \ref{secpl} is to study $(Q_p)$-property of pluri-adjoint linear series. The proofs of \theoremref{thm1} and \theoremref{thm2.3} are given in Sect. \ref{sectiondb}. The proof of \theoremref{prerat} appears in Sect. \ref{sectionrat}.  \\

\noindent
{\bf Acknowledgements.} 
We are very grateful to Mircea Musta\c{t}\u{a} and Mihnea Popa for valuable comments on an earlier version of this manuscript and conversations at different stages of this article. We thank Sridhar Venkatesh for several helpful discussions. D.R. expresses his gratitude to Angelo Felice Lopez for valuable comments on an earlier draft. We also thank the anonymous referee for comments and suggestions that improved the exposition. L.S. was partially supported by Guangzhou Science and Technology Programme (No. 2024A04J6409) and NSFC grant (No.~12371063).

\section{Preliminaries}\label{ch1}

We work over the field $\mathbb{C}$ of complex numbers. By a {\it variety}, we mean an integral separated scheme of finite type over $\mathbb{C}$. For a variety $Z$, we denote by $Z_{\textrm{sing}}$ its singular locus. 

\subsection{Hodge theory}\label{prelimsing}
In this section, we recall the basics of higher Du Bois and rational singularities. In pursuit of a general theory of these singularities beyond the local complete intersection case, \cite{SVV} extracted their key features that we will describe.  

\subsubsection{Higher Du Bois singularities}  The first property of this type of singularities is a vanishing condition:
\begin{definition}\label{defpredb}
    A variety $Z$ is said to have {\it pre-$p$-Du Bois singularities} for $p\in\mathbb{N}$ if $$\mathcal{H}^i(\underline{\Omega}_Z^k)=0\,\textrm{ for all }\,i\geq 1, 0\leq k\leq p.$$ Equivalently, the complexes $\underline{\Omega}_Z^k$ are concentrated in degree zero for $k$ in the given range.
\end{definition}

Recall that by definition, a variety $Z$ has Du Bois singularities if the morphism $\mathcal{O}_Z\to\underline{\Omega}_Z^0$ is a quasi-isomorphism.
Also recall that $Z$ is called {\it seminormal} if $\mathcal{H}^0(\underline{\Omega}_Z^0)\cong\mathcal{O}_Z$. In particular, $Z$ has Du Bois singularities if and only if it is seminormal and its singularities are pre-$0$-Du Bois. The picture generalizes through the following
\begin{definition}\label{defDB}
    A variety $Z$ is said to have {\it $p$-Du Bois singularities} for $p\in\mathbb{N}$ if it is seminormal, and the following conditions are satisfied:
    \begin{enumerate}
        \item $\textrm{codim}_Z(Z_{\textrm{sing}})\geq 2p+1$,
        \item $Z$ has pre-$p$-Du Bois singularities,
        \item $\mathcal{H}^0(\underline{\Omega}_Z^k)$ is reflexive for all $0\leq k\leq p$.
    \end{enumerate}
\end{definition}

A related condition on a variety $Z$ is the requirement that the morphisms 
\begin{equation}\label{stDB}
    \Omega_Z^k\to \underline{\Omega}_Z^k\,\textrm{ are quasi-isomorphisms for }\,0\leq k\leq p.
\end{equation} 
The above condition is equivalent to the conditions stated in \definitionref{defDB} when $Z$ is a local complete intersection, but the requirement \eqref{stDB} is generally more restrictive. If $Z$ satisfies \eqref{stDB}, then its singularities are called {\it strict-$p$-Du Bois} in \cite{SVV}.

\subsubsection{Higher rational singularities} We now proceed towards the definition of higher rational singularities. Let us first introduce the notation for the (shifted) Grothendieck duality functor: given a variety $Z$, we set $${\bf D}_Z(-):={\bf R}\mathcal{H}\textit{om}_{\mathcal{O}_Z}(-,\omega_Z^{\bullet})[-\dim Z].$$
While defining higher rational singularities, one is concerned with the complex ${\bf D}_Z(\underline{\Omega}_Z^{\dim Z-k})$:

\begin{definition}\label{defprerat}
    A variety $Z$ is said to have {\it pre-$p$-rational singularities} for $p\in\mathbb{N}$ if $$\mathcal{H}^i({\bf D}_Z(\underline{\Omega}_Z^{\dim Z-k}))=0\,\textrm{ for all }\, i\geq 1,0\leq k\leq p.$$
    Equivalently, ${\bf D}_Z(\underline{\Omega}_Z^{\dim Z-k})$ is concentrated in degree zero for $k$ in the given range.
\end{definition}
Recall that $Z$ is said to have rational singularities if the map $\mathcal{O}_Z\to {\bf D}_Z(\underline{\Omega}_Z^{\dim Z})$ (this is the case $k=0$ of the morphisms $\Omega_Z^k\to {\bf D}_Z(\underline{\Omega}_Z^{\dim Z-k})$ constructed in \cite[Proposition 6.1]{MP22}) is a quasi-isomorphism; this is equivalent to the requirement that $Z$ is normal and $R^if_*\mathcal{O}_{\Tilde{Z}}=0$ when $i>0$ for a log resolution $f:\Tilde{Z}\to Z$ with reduced exceptional divisor $E$ simple normal crossings. In particular, $Z$ has rational singularities if and only if it is normal and its singularities are pre-$0$-rational. In the upcoming sections, we will use the following kind of resolution of singularities.
\begin{definition}\label{defsl}
    Let $Z$ be a variety. By a {\it strong log resolution} of $Z$, we mean a proper morphism $\mu:\Tilde{Z}\to Z$ that is an isomorphism over $Z_\textrm{sm}:=Z\backslash Z_{\textrm{sing}}$ with $\Tilde{Z}$ smooth, and $\mu^{-1}{(Z_{\textrm{sing}})}_{\textrm{red}}$ is a simple normal crossing divisor.
\end{definition}

The picture for rational singularities generalizes through the following: 
\begin{definition}\label{defhr}
    A variety $Z$ is said to have {\it $p$-rational singularities} for $p\in\mathbb{N}$ if it is normal, and 
    \begin{enumerate}
        \item $\textrm{codim}_Z(Z_{\textrm{sing}})> 2p+1$,
        \item $R^if_*\Omega_{\Tilde{Z}}^k(\log E)=0$ for all $i>0$ and $0\leq k\leq p$ and for any strong log resolution $f:\Tilde{Z}\to Z$.
    \end{enumerate}
\end{definition}
The second condition above is equivalent to the requirement that the singularities of $Z$ are pre-$p$-rational. When $Z$ is a local complete intersection, the conditions of \definitionref{defhr} are equivalent to requiring the morphisms 
\begin{equation}\label{stpr}
    \Omega_Z^k\to {\bf D}_Z(\underline{\Omega}_Z^{\dim Z-k})\,\textrm{ are quasi-isomorphisms for }\,0\leq k\leq p.
\end{equation}
However, the above condition is more restrictive in general, and if $Z$ satisfies this, then its singularities are called {\it strict-$p$-rational} in \cite{SVV}. We refer the interested reader to {\it loc. cit.} where various relationships among these singularities are discussed. 

\subsection{Preliminaries on secant varieties}\label{prelims}

In this section, we describe the geometry of secant varieties and provide several computational tools that will be used throughout the rest of the article.

\subsubsection{Strong log resolution of secant varieties}\label{s1}
Let $X$ be a smooth projective variety of dimension $n$. Let $L$ be a very ample line bundle on $X$ inducing the embedding $X\hookrightarrow\mathbb{P}(H^0(L))$. Further, let $X^{[2]}$ be the Hilbert scheme of two points on $X$, which is a smooth projective variety. Consider the universal family $\Phi\subset X^{[2]}\times X$ that comes with two natural projections $q:\Phi\to X$ and $\theta:\Phi\to X^{[2]}$. It is known that $\Phi\cong \textrm{Bl}_{\Delta}(X\times X)$ and let $b_{\Delta}:\Phi\cong\textrm{Bl}_{\Delta}(X\times X)\to X\times X$ be the blow-up morphism where $\Delta\subset X\times X$ is the diagonal. We have the following commutative diagram:
\begin{equation*}
    \begin{tikzcd}
    & \Phi\arrow[dl, swap, "\theta"]\arrow[rr, "b_{\Delta}"]\arrow[dr, "q"] & & X\times X\arrow[dl, swap, "p_1"]\\
    X^{[2]} && X
\end{tikzcd}
\end{equation*}
We set $\mathcal{E}_L:=\theta_*q^*L$ and notice that this bundle is globally generated as $L$ is very ample, i.e., the evaluation map
\begin{equation*}\label{ev}
    H^0(\mathcal{E}_L)\otimes\mathcal{O}_{X^{[2]}}\to \mathcal{E}_L
\end{equation*}
is surjective. Observe that $H^0(\mathcal{E}_L)\cong H^0(L)$ whence the above surjection induces a map $f:\mathbb{P}(\mathcal{E}_L)\to\mathbb{P}(H^0(L))$ which surjects onto the secant variety $\Sigma:=\Sigma(X,L)$ which, by definition, is the Zariski closure of the union of 2-secant lines of $X\hookrightarrow\mathbb{P}(H^0(L))$. This way, we obtain the surjective map $t:\mathbb{P}(\mathcal{E}_L)\to \Sigma$. In what follows, we set $\pi:\mathbb{P}(\mathcal{E}_L)\to X^{[2]}$ to be the structure morphism. Recall the following
\begin{definition}\label{defva}
A bundle $L$ on a smooth projective variety $X$ is called {\it $k$--very ample} if for any $0$--dimensional subscheme $\xi$ of length $k+1$, the evaluation map of global sections
$H^0(L)\to H^0(L\otimes \mathcal{O}_{\xi})$
surjects. 
\end{definition}

The singular locus of secant varieties is very simple under the mild condition that $L$ is 3-very ample. Moreover, in this case the map $t$ described above is a resolution of singularities. Although these facts are well-known to experts, 
we include the proofs for the convenience of the reader. 

\begin{proposition}\label{stronglog}
Assume $L$ is 3-very ample. Then the following statements hold:
\begin{enumerate}
    \item\label{stronglog1} $t|_{\mathbb{P}(\mathcal{E}_L)\backslash t^{-1}(X)}: \mathbb{P}(\mathcal{E}_L)\backslash t^{-1}(X)\to \Sigma\backslash X$ is an isomorphism. In particular $\Sigma_{\textrm{sing}}\subseteq X$.
    \item\label{stronglog2} If in addition $\Sigma\neq\mathbb{P}(H^0(L))$, then $\Sigma_{\textrm{sing}}=X$.
\end{enumerate} 
\end{proposition}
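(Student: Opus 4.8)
The plan is to exploit the structure of $t$ as a map from the $\mathbb{P}^1$-bundle $\pi\colon\mathbb{P}(\mathcal{E}_L)\to X^{[2]}$. Since $H^0(L\otimes\mathcal{O}_{\xi})$ is two-dimensional for every length-two subscheme $\xi$, the bundle $\mathcal{E}_L$ has rank $2$, so $\mathbb{P}(\mathcal{E}_L)$ is smooth of dimension $2n+1$, and the surjection $H^0(L)\otimes\mathcal{O}\twoheadrightarrow\mathcal{E}_L$ carries each fibre $\pi^{-1}(\xi)$ isomorphically onto the secant line $\overline{\xi}\subset\mathbb{P}(H^0(L))$ spanned by $\xi$ (already $1$-very ampleness makes the evaluation $H^0(L)\to H^0(L\otimes\mathcal{O}_{\xi})$ surjective). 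For part (1), I would prove that $t$ is a proper, injective immersion over $\Sigma\setminus X$; such a morphism is a closed immersion onto its image, which here is exactly the open set $\Sigma\setminus X$, so $t$ restricts to an isomorphism there and $\Sigma\setminus X$ is smooth, giving $\Sigma_{\mathrm{sing}}\subseteq X$.

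The first step is bijectivity over $\Sigma\setminus X$. Surjectivity onto $\Sigma$ is built into the definition of $t$, so the content is that a point $p\in\Sigma\setminus X$ lies on a unique secant line. Suppose $p\in\overline{\xi_1}\cap\overline{\xi_2}$ with $\xi_1\neq\xi_2$. If $\xi_1$ and $\xi_2$ are disjoint, then $\xi_1\cup\xi_2$ has length $4$, so $3$-very ampleness (see \definitionref{defva}) makes $H^0(L)\to H^0(L\otimes\mathcal{O}_{\xi_1\cup\xi_2})$ surjective and forces $\xi_1\cup\xi_2$ to span a $\mathbb{P}^3$; but $\overline{\xi_1}$ and $\overline{\xi_2}$ are two lines through the common point $p$, so $\xi_1\cup\xi_2$ lies in their joint span, a plane, a contradiction. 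If instead $\xi_1$ and $\xi_2$ share a point $x\in X$, then $p\neq x$ because $p\notin X$; two distinct lines meet in at most one point, so $\overline{\xi_1}=\overline{\xi_2}$, and this common line then carries a subscheme of $X$ of length at least $3$, contradicting $2$-very ampleness (implied by $3$-very ampleness). Hence the secant line through $p$ is unique and $t$ is injective over $\Sigma\setminus X$. The same length-two and length-three considerations show $\overline{\xi}\cap X=\Supp(\xi)$, which pins down $t^{-1}(X)$.

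The main obstacle is upgrading this bijection to an isomorphism, i.e.\ showing that $t$ is unramified over $\Sigma\setminus X$. Writing $t$ as the morphism $f\colon\mathbb{P}(\mathcal{E}_L)\to\mathbb{P}(H^0(L))$ attached to $\mathcal{O}_{\pi}(1)$ and the subspace $H^0(L)=H^0(\mathcal{E}_L)$, followed by the inclusion $\Sigma\hookrightarrow\mathbb{P}(H^0(L))$, it is enough to check that $f$ is an immersion at each $u\notin t^{-1}(X)$, that is, that $H^0(L)$ separates tangent vectors at $u$. First-order secant data at such a $u$ is governed by a length-four subscheme of $X$, a second-order thickening of $\xi=\pi(u)$, and separation of tangent vectors translates precisely into the surjectivity of $H^0(L)\to H^0(L\otimes\mathcal{O}_{\zeta})$ for these length-four $\zeta$, which $3$-very ampleness supplies; the degenerate configurations, where the relevant scheme drops length or becomes supported on $X$, are exactly those in $t^{-1}(X)$. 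Making this infinitesimal dictionary precise, especially for the non-reduced and tangential configurations, is the technical heart of part (1). Granting it, $t$ is a proper injective immersion over $\Sigma\setminus X$, hence an isomorphism onto $\Sigma\setminus X$, and part (1) follows.

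For part (2), assume $\Sigma\neq\mathbb{P}(H^0(L))$; by part (1) it remains to show $X\subseteq\Sigma_{\mathrm{sing}}$. Suppose instead that some $x\in X$ is a smooth point of $\Sigma$, so that the embedded projective tangent space $\mathbb{T}_x\Sigma$ is a linear subspace of $\mathbb{P}(H^0(L))$ of dimension $\dim\Sigma$. For every $y\in X$ the secant line $\overline{xy}$ lies in $\Sigma$ and passes through the smooth point $x$, hence is contained in $\mathbb{T}_x\Sigma$; in particular $y\in\mathbb{T}_x\Sigma$, so $X\subseteq\mathbb{T}_x\Sigma$. Since $X$ is non-degenerate (it is embedded by the complete linear system $|L|$), this forces $\mathbb{T}_x\Sigma=\mathbb{P}(H^0(L))$, whence $\dim\Sigma=\dim\mathbb{P}(H^0(L))$ and $\Sigma=\mathbb{P}(H^0(L))$, contrary to hypothesis. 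Therefore every point of $X$ is singular on $\Sigma$, and together with part (1) we conclude $\Sigma_{\mathrm{sing}}=X$.
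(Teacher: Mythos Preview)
Your set-theoretic injectivity argument for part~(1) is essentially the paper's: disjoint supports force the union to span a $\mathbb{P}^3$ by $3$-very ampleness, and shared supports force the two secant lines to coincide. Where you diverge is in upgrading the bijection to an isomorphism. You propose to show that $f$ is an immersion by translating the tangent-vector separation into a length-four condition on $X$, but you explicitly leave this ``infinitesimal dictionary'' unproved and grant it. The paper instead argues via fibres (\lemmaref{c2}): once one knows the unique length-two $\xi$ with $x\in\langle\xi\rangle$, the restriction $t|_{\pi^{-1}(\xi)}\colon\pi^{-1}(\xi)\to\langle\xi\rangle$ is already an isomorphism, so the scheme-theoretic fibre $t^{-1}(x)$ is a reduced point; upper semicontinuity of fibre dimension and of the rank of $t_*\mathcal{O}$ then produces an open neighbourhood over which $t$ is an isomorphism. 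This fibrewise route is shorter and self-contained, and it avoids the tangent-space analysis you identify as the hard step.

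Your argument for part~(2), by contrast, is cleaner than the paper's. The paper proves two preliminary lemmas on embedded tangent spaces of $X$ and then applies Terracini's lemma twice to deduce $X\subseteq\mathbb{T}_x\Sigma$. You observe directly that any line in $\Sigma$ through a smooth point $x$ must lie in $\mathbb{T}_x\Sigma$, so every secant $\overline{xy}$ does, hence every $y\in X$ does; non-degeneracy then finishes. This one-line containment $X\subseteq\mathbb{T}_x\Sigma$ bypasses Terracini entirely and is a genuine simplification.
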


We proceed to the proof of the above result. In what follows, we use that given a zero-dimensional subspace $\xi\subset X$ with ideal sheaf $\mathcal{I}_{\xi}$, the linear subspace $\langle\xi\rangle$ spanned by $\xi$ is isomorphic to $\mathbb{P}(H^0(L)/H^0(L\otimes\mathcal{I}_{\xi}))$. Observe that if $\xi_1\subset X$ and $\xi_2\subset X$ are distinct zero-dimensional subschemes of length 2, then $\langle\xi_1\rangle\neq\langle\xi_2\rangle$ when $L$ is $3$-very ample.

\begin{lemma}\label{c1}
Assume $L$ is $3$-very ample. Let $\xi_1\subset X$ and $\xi_2\subset X$ be zero-dimensional subschemes of length 2 with $\textrm{Supp}(\xi_1)\cap\textrm{Supp}(\xi_2)=\emptyset$. Then $\langle\xi_1\rangle\cap\langle\xi_2\rangle=\emptyset$.
\end{lemma}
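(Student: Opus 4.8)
The plan is to reduce the statement to the Grassmann dimension formula for linear subspaces of $\PP(H^0(L))$, after using $3$-very ampleness to pin down the dimension of the span of the length-four scheme $\xi_1\cup\xi_2$. The whole argument is really linear algebra, packaged so that the positivity hypothesis enters in exactly one place.

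First I would record that for a length-two subscheme $\xi\subset X$ the span $\langle\xi\rangle=\PP(H^0(L)/H^0(L\otimes\mathcal{I}_\xi))$ is a line. Indeed, since $L$ is in particular $1$-very ample, the evaluation $H^0(L)\to H^0(L\otimes\mathcal{O}_\xi)$ is surjective, so $H^0(L)/H^0(L\otimes\mathcal{I}_\xi)\cong H^0(L\otimes\mathcal{O}_\xi)$ has dimension equal to the length of $\xi$, namely $2$; thus $\dim\langle\xi_i\rangle=1$. Next, because $\textrm{Supp}(\xi_1)\cap\textrm{Supp}(\xi_2)=\emptyset$, the subscheme $\xi:=\xi_1\cup\xi_2$ is a disjoint union, hence a zero-dimensional scheme of length $4$ with $\mathcal{O}_\xi\cong\mathcal{O}_{\xi_1}\oplus\mathcal{O}_{\xi_2}$. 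Applying \definitionref{defva} with $k=3$ to this length-four scheme, the evaluation $H^0(L)\to H^0(L\otimes\mathcal{O}_\xi)$ is again surjective, so that $\dim\langle\xi\rangle=4-1=3$.

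The geometric heart of the argument is the identity $\langle\xi_1\cup\xi_2\rangle=\langle\,\langle\xi_1\rangle,\langle\xi_2\rangle\,\rangle$, i.e. the span of the union equals the join of the two lines. This is essentially tautological: the join on the right is the smallest linear subspace containing both $\xi_1$ and $\xi_2$, which is exactly $\langle\xi_1\cup\xi_2\rangle$. Concretely I would verify it by passing to annihilators in $H^0(L)^\ast$: one has $H^0(L\otimes\mathcal{I}_\xi)=H^0(L\otimes\mathcal{I}_{\xi_1})\cap H^0(L\otimes\mathcal{I}_{\xi_2})$, and the annihilator of an intersection is the sum of the annihilators, which yields precisely the join. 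With this identity in hand, the projective Grassmann formula applied to the two lines $\ell_i:=\langle\xi_i\rangle$ reads $\dim\langle\ell_1,\ell_2\rangle=\dim\ell_1+\dim\ell_2-\dim(\ell_1\cap\ell_2)$, under the convention $\dim\emptyset=-1$. Substituting $\dim\langle\ell_1,\ell_2\rangle=3$ and $\dim\ell_1=\dim\ell_2=1$ forces $\dim(\ell_1\cap\ell_2)=-1$, that is $\langle\xi_1\rangle\cap\langle\xi_2\rangle=\emptyset$.

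The only genuine subtlety — and the step I would be most careful about — is the length-four computation together with the correct invocation of $3$-very ampleness: one must check that disjoint supports make $\xi_1\cup\xi_2$ a bona fide length-four zero-dimensional subscheme, so that the $k=3$ case of $k$-very ampleness applies, and that the evaluation map splits over the two disjoint pieces, which is what guarantees the span is genuinely a $\PP^3$ rather than something of smaller dimension. Once these dimensions are established, the conclusion is pure linear algebra, and in fact amounts to the familiar observation that two lines in $\PP^3$ whose join is all of $\PP^3$ must be skew.
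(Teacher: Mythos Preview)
Your proof is correct and follows essentially the same approach as the paper: both use $3$-very ampleness to show that $\langle\xi_1\cup\xi_2\rangle\cong\PP^3$, and then invoke the linear-algebra fact that two lines spanning a $\PP^3$ must be skew. The only cosmetic difference is that the paper argues by contradiction (if the lines met, their join would be a $\PP^2$, contradicting $\PP^3$), whereas you argue directly via the Grassmann dimension formula; the content is identical.
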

\begin{proof} Since $\xi:=\xi_1\cup\xi_2$ has length 4 and $L$ is 3-very ample, the natural map $H^0(L)\to H^0(L\otimes\mathcal{O}_{\xi})$ is surjective. It follows that $\langle\xi\rangle\cong\mathbb{P}(H^0(L\otimes\mathcal{O}_{\xi}))\cong\mathbb{P}^3$. Therefore the lines $\langle\xi_1\rangle$ and $\langle\xi_2\rangle$ do not intersect. \end{proof}

\begin{lemma}\label{c2}
Assume $L$ is $3$-very ample, and let $x\in\Sigma$. If there is a unique zero-dimensional subscheme $\xi\subset X$ of length 2 such that $x\in\langle\xi\rangle$, then there exists an open set $x\in V\subset \Sigma$ such that $t|_{t^{-1}(V)}:t^{-1}(V)\to V$ is an isomorphism; in particular $x\in \Sigma$ is smooth.
\end{lemma}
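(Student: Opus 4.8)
The goal is to produce an open $V\ni x$ over which $t$ is an isomorphism; smoothness of $x$ is then automatic since $t^{-1}(V)$ is open in the smooth variety $\mathbb{P}(\mathcal{E}_L)$. My plan has three steps: (i) compute the fibre $t^{-1}(x)$ set-theoretically; (ii) show $t$ is unramified at the unique preimage $\tilde x$; and (iii) combine these via a finite-morphism argument to conclude. The technical heart is step (ii).

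\textbf{Step (i): the fibre is a single point.} The map $f$ restricts on each fibre $\pi^{-1}([\eta])\cong\mathbb{P}^1$ to an isomorphism onto the secant line $\langle\eta\rangle\subset\mathbb{P}(H^0(L))$: the quotient defining $f$ restricts there to the evaluation $H^0(L)\twoheadrightarrow H^0(L\otimes\mathcal{O}_\eta)$, and $\langle\eta\rangle\cong\mathbb{P}(H^0(L)/H^0(L\otimes\mathcal{I}_\eta))$. Hence any $\tilde y\in\mathbb{P}(\mathcal{E}_L)$ lying over $[\eta]$ satisfies $f(\tilde y)\in\langle\eta\rangle$, so $t(\tilde y)=x$ forces $x\in\langle\eta\rangle$ and therefore $\eta=\xi$ by the uniqueness hypothesis. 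Within the single fibre $\pi^{-1}([\xi])\xrightarrow{\ \sim\ }\langle\xi\rangle$ there is exactly one point over $x$, so $t^{-1}(x)=\{\tilde x\}$. I also note that uniqueness forces $x\notin X$, since any $a\in X$ lies on $\langle\{a,y\}\rangle$ for infinitely many $y$.

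\textbf{Step (ii): injective differential (the main obstacle).} I would show that $df_{\tilde x}\colon T_{\tilde x}\mathbb{P}(\mathcal{E}_L)\to T_x\mathbb{P}(H^0(L))$ is injective, i.e.\ that $f$ is unramified at $\tilde x$. Since $f$ is the morphism attached to the globally generated bundle $\mathcal{O}_{\mathbb{P}(\mathcal{E}_L)}(1)$ with $H^0(\mathcal{E}_L)=H^0(L)$, this amounts to showing that $H^0(L)$ separates every length-two subscheme of $\mathbb{P}(\mathcal{E}_L)$ supported at $\tilde x$. I would translate each such tangent vector into a subscheme of $X$: a direction at $\tilde x$ combines a first-order deformation of $\xi$ in $X^{[2]}$ with a vertical direction along $\langle\xi\rangle$, and tracking these produces a curvilinear subscheme $\zeta\subset X$ of length at most $4$ (in the reduced case $\xi=\{a,b\}$ one thickens $a$ and $b$ each by a single tangent vector; the non-reduced case is concentrated but still of length $\leq 4$). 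Because $L$ is $3$-very ample, $H^0(L)\to H^0(L\otimes\mathcal{O}_\zeta)$ is surjective for every such $\zeta$, and this surjectivity is exactly what separates the corresponding tangent vector; the hypothesis $x\notin X$ is what keeps the relevant functionals nondegenerate. In the reduced case this is the statement that the embedded projective tangent spaces $\mathbb{T}_aX$ and $\mathbb{T}_bX$ are disjoint in $\mathbb{P}(H^0(L))$, so $\dim\langle\mathbb{T}_aX,\mathbb{T}_bX\rangle=2n+1$ and $df_{\tilde x}$ has full rank. Carrying out this separation uniformly over all tangent directions, and across the reduced and non-reduced shapes of $\xi$, is the crux of the argument.

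\textbf{Step (iii): conclusion.} Granting (i) and (ii), $t$ is proper (source and target are projective) with finite fibre over $x$, hence finite over some open $V\ni x$; write $W:=t^{-1}(V)$. Unramifiedness at $\tilde x$ together with $t^{-1}(x)=\{\tilde x\}$ shows the scheme-theoretic fibre over $x$ is reduced of length one, so $(t_*\mathcal{O}_W)_x\otimes_{\mathcal{O}_{\Sigma,x}}k(x)=k(x)$. Since $W$ and $\Sigma$ are integral and $t$ is dominant, $\mathcal{O}_{\Sigma,x}\hookrightarrow(t_*\mathcal{O}_W)_x$, and Nakayama's lemma applied to this module-finite extension forces $\mathcal{O}_{\Sigma,x}\xrightarrow{\ \sim\ }(t_*\mathcal{O}_W)_x$. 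A finite morphism inducing an isomorphism of structure sheaves is an isomorphism, so after shrinking $V$ we obtain that $t|_{t^{-1}(V)}\colon t^{-1}(V)\to V$ is an isomorphism; as $t^{-1}(V)$ is smooth, $x\in\Sigma$ is a smooth point.
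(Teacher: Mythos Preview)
Your architecture is sound, but Step (ii) takes a far more laborious path than the paper. The paper does not compute any differential: having identified the set-theoretic fibre as your Step (i) does, it simply notes that $t|_{\pi^{-1}([\xi])}:\pi^{-1}([\xi])\to\langle\xi\rangle$ is an isomorphism and reads off from the resulting Cartesian square
\[
\begin{tikzcd}
    t^{-1}(x)\arrow[d]\arrow[r,hook] & \pi^{-1}([\xi])\arrow[d,"\cong"]\\
    \{x\}\arrow[r,hook] & \langle\xi\rangle
\end{tikzcd}
\]
that the \emph{scheme}-theoretic fibre $t^{-1}(x)$ is already a reduced point; this collapses your Steps (i) and (ii) into one line. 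Then semicontinuity of fibre dimension gives finiteness over an open $U\ni x$, and upper semicontinuity of the rank of the coherent sheaf $t_*\mathcal{O}$ (your Nakayama phrasing has the same content) produces the open $V\ni x$ over which $t$ is an isomorphism.

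Your Step (ii), as written, is moreover incomplete. The passage from a tangent vector at $\tilde x$ to a curvilinear $\zeta\subset X$ of length $\le 4$, and from the surjectivity $H^0(L)\twoheadrightarrow H^0(L\otimes\mathcal{O}_\zeta)$ to the nonvanishing of $df_{\tilde x}$ along that direction, is only asserted; one really must track how the position of $x$ on the secant line interacts with the first-order deformation of $\xi$, and the case where $\xi$ is non-reduced is a separate computation you do not carry out. In particular, your claim that $\mathbb{T}_aX\cap\mathbb{T}_bX=\varnothing$ does not follow from $3$-very ampleness alone: that hypothesis only forces any intersection point to lie in $\{a,b\}$, and one then needs the uniqueness of $\xi$ to exclude these (for instance, if $a\in\mathbb{T}_bX$ then $x\in\overline{ab}\subset\mathbb{T}_bX$, yielding a second length-$2$ subscheme through $x$). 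So your route is viable but requires real work beyond what you sketch, whereas the paper's shortcut sidesteps the entire differential calculation.
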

\begin{proof} Put $l=\langle\xi\rangle$ and consider the Cartesian square:
\[
\begin{tikzcd}
    t^{-1}(x)\arrow[d]\arrow[r, hook] & \pi^{-1}(\xi)\arrow[d, "\cong"]\\
    \left\{x\right\}\arrow[r,hook] & l
\end{tikzcd}
\]
Since $t|_{\pi^{-1}(\xi)}:\pi^{-1}([\xi])\to l$ is an isomorphism by the construction of the resolution $t$, the scheme-theoretic preimage $t^{-1}(x)$ is a reduced point. Thus, there exists an open set $x\in U\subset \Sigma$ such that for all $y\in U$, we have $\dim(t^{-1}(y))=0$. Consequently, $t|_U:t^{-1}(U)\to U$, is a finite morphism. Using base change of affine morphisms, and upper semi-continuity of ranks of coherent sheaves, we conclude that there exists an open subset $x\in V\subseteq U$ such that $t|_V:t^{-1}(V)\to V$ is an isomorphism. The last assertion follows since $t^{-1}(V)$ is smooth.
\end{proof}

\begin{proof}[Proof of \propositionref{stronglog} (\ref{stronglog1}).] Thanks to  \lemmaref{c2}, it is enough to show that for any $x\in \Sigma\backslash X$, there exists a unique zero-dimensional subscheme $\xi\subset X$ of length 2 such that $x\subset \langle\xi\rangle$. Suppose to the contrary, $x\in \Sigma\backslash X$ with 
$x\in \left(l_1=\langle\xi_1\rangle\right)\cap \left(l_2=\langle\xi_2\rangle\right)$, where $\xi_i\subset X$ are distinct zero-dimensional subschemes of length 2. Since $\langle\xi_1\rangle\neq\langle\xi_2\rangle$ and since two lines intersect at most at one point, we conclude that $\textrm{Supp}(\xi_1)\cap\textrm{Supp}(\xi_2)=\emptyset$ (see the left schematic diagram in \figureref{fig1}). 
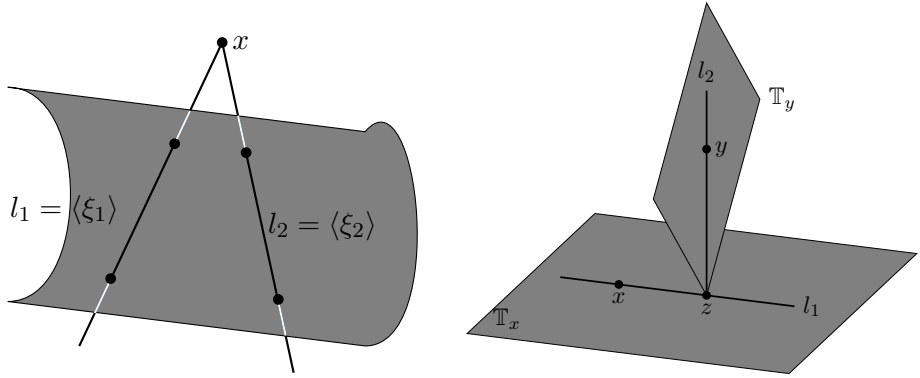
\begin{figure}[H]
    \centering
    \tdplotsetmaincoords{70}{110}
    \resizebox{0.4\textwidth}{!}{
\begin{tikzpicture}[tdplot_main_coords,font=\sffamily]
\draw[fill=gray,opacity=0.2] (0,-3,2) -- (0,2,2) to[out=50,in=0] (0,2,-1) -- (0,-3,-1) to[out=0,in=0] cycle;
\draw[thick, postaction={-,draw=aliceblue,dash pattern=on 0pt off 0.48cm on 0.48cm off 1.50cm}] (0,-2,-1.5) -- (0,0,3);
\draw[thick, postaction={-,draw=aliceblue,dash pattern=on 0pt off 0.50cm on 0.48cm off 1.50cm}] (0,1,-1.5) -- (0,0,3);
\fill (0,-1.56,-0.5) circle[radius=2pt];
\fill (0,-0.665,1.5) circle[radius=2pt];
\fill (0,0.335,1.5) circle[radius=2pt];
\fill (0,0.785,-0.5) circle[radius=2pt];
\fill (0,0,3) circle[radius=2pt] node[right] {$x$};
\fill (0,-1.3,0.5) node[left] {$l_1=\langle\xi_1\rangle$};
\fill (0,0.5,0.5) node[right] {$l_2=\langle\xi_2\rangle$};
\end{tikzpicture}}
\resizebox{0.4\textwidth}{!}{
\begin{tikzpicture}[tdplot_main_coords,font=\sffamily]
\draw[fill=gray,opacity=0.1] (-3,-3,0) -- (-3,2.5,0) -- (3,2.5,0) -- (3,-3,0) -- cycle;
\draw[fill=gray,opacity=0.2] (0,0,0) -- (2.5,0,2.5) -- (0,0,5) -- (-2.5,0,2.5) -- cycle;
\draw[thick] (2.0,-2.3,0) node[left] {\textcolor{black}{$\mathbb{T}_{x}$}};
\draw[thick] (-2.5,0,2.5) node[right] {\textcolor{black}{$\mathbb{T}_{y}$}};
\draw[thick] (0,1.5,0) node[right] {\textcolor{black}{$l_1$}};
\draw[thick] (0,0,3.5) node[above] {\textcolor{black}{$l_2$}};
\draw[thick] (0,-2.5,0) -- (0,1.5,0);
\draw[thick] (0,0,0) -- (0,0,3.5);
\fill (0,-1.5,0) circle[radius=2pt] node[below] {$x$};
\fill (0,0,2.5) circle[radius=2pt] node[right] {$y$};
\fill (0,0,0) circle[radius=2pt] node[below] {$z$};
\end{tikzpicture}}
\caption{Left: $x\in \left(l_1=\langle\xi_1\rangle\right)\cap \left(l_2=\langle\xi_2\rangle\right)$. Right: $z\in \mathbb{T}_x\cap\mathbb{T}_y$ where $z\notin\left\{x,y\right\}$}
\label{fig1}
\end{figure}
\vspace{-0.6cm}
\noindent But this contradicts \lemmaref{c1}.
\end{proof}

We now proceed to prove \propositionref{stronglog} (\ref{stronglog2}).

\begin{lemma}\label{c3}
Assume $L$ is $3$-very ample. Let $x,y\in X$ be two distinct points with the embedded tangent spaces $\mathbb{T}_x\cap\mathbb{T}_y\neq\emptyset$. Then either $\mathbb{T}_x\cap\mathbb{T}_y=\left\{x\right\}$ or $\mathbb{T}_x\cap\mathbb{T}_y=\left\{y\right\}$.
\end{lemma}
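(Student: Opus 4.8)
The plan is to argue by contradiction that $\mathbb{T}_x\cap\mathbb{T}_y$ can contain no point other than $x$ or $y$, then to rule out the possibility that the intersection equals $\{x,y\}$; combined with the hypothesis $\mathbb{T}_x\cap\mathbb{T}_y\neq\phi$, this forces the intersection to be exactly $\{x\}$ or $\{y\}$. The key geometric input is that at a smooth point the embedded tangent space $\mathbb{T}_x$ is swept out by the embedded tangent lines: since $\dim\mathbb{T}_x=n=\dim X$, the lines through $x$ inside $\mathbb{T}_x$ form exactly a $\mathbb{P}^{n-1}$, matching the tangent directions $\mathbb{P}(T_xX)$, so every line through $x$ in $\mathbb{T}_x$ is a tangent line. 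Such a tangent line is of the form $\langle\xi\rangle$ for a length-$2$ subscheme $\xi\subset X$ supported at $x$, using that $L$ is $3$-very ample (in particular $1$-very ample), so that $\langle\xi\rangle\cong\mathbb{P}(H^0(L)/H^0(L\otimes\mathcal{I}_{\xi}))$ is genuinely a line.

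Concretely, I would suppose $z\in\mathbb{T}_x\cap\mathbb{T}_y$ with $z\neq x$ and $z\neq y$; this is precisely the configuration in the right-hand picture of \figureref{fig1}. Since $z\in\mathbb{T}_x\setminus\{x\}$, the line $l_1$ through $x$ and $z$ lies in $\mathbb{T}_x$ and is a tangent line, so $l_1=\langle\xi_1\rangle$ for a length-$2$ subscheme $\xi_1\subset X$ with $\textrm{Supp}(\xi_1)=\{x\}$; symmetrically, the line $l_2$ through $y$ and $z$ satisfies $l_2=\langle\xi_2\rangle$ for a length-$2$ subscheme $\xi_2\subset X$ with $\textrm{Supp}(\xi_2)=\{y\}$. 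Because $x\neq y$, the supports are disjoint, so \lemmaref{c1} yields $l_1\cap l_2=\phi$; but $z\in l_1\cap l_2$ by construction, a contradiction. This establishes the inclusion $\mathbb{T}_x\cap\mathbb{T}_y\subseteq\{x,y\}$.

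It then remains to exclude $\mathbb{T}_x\cap\mathbb{T}_y=\{x,y\}$. If both $x$ and $y$ lay in the intersection, then $x,y\in\mathbb{T}_x$ and $x,y\in\mathbb{T}_y$; since each $\mathbb{T}$ is a linear subspace, the entire line through $x$ and $y$ would be contained in $\mathbb{T}_x\cap\mathbb{T}_y$, contradicting the inclusion just proved (a line over $\mathbb{C}$ has infinitely many points). Hence $\mathbb{T}_x\cap\mathbb{T}_y$ is a nonempty subset of $\{x,y\}$ that is not all of $\{x,y\}$, so it equals $\{x\}$ or $\{y\}$, as claimed.

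The main obstacle — indeed essentially the only nontrivial point — is the identification in the first paragraph of every line through $x$ in $\mathbb{T}_x$ with the span of a length-$2$ subscheme of $X$ supported at $x$. Once this dictionary between points of the embedded tangent space and length-$2$ schemes is in place, the statement reduces immediately to the disjointness of spans proved in \lemmaref{c1}. I would therefore take care to state this correspondence precisely, emphasizing that it uses the smoothness of $X$ at $x$ together with the dimension equality $\dim\mathbb{T}_x=\dim X$, since it is exactly what lets us produce two length-$2$ schemes with disjoint support to which \lemmaref{c1} applies.
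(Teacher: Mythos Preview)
Your proposal is correct and follows essentially the same route as the paper: assume a point $z\in\mathbb{T}_x\cap\mathbb{T}_y$ with $z\notin\{x,y\}$, observe that the lines $\overline{xz}\subset\mathbb{T}_x$ and $\overline{yz}\subset\mathbb{T}_y$ are spans of length-$2$ subschemes supported at $x$ and $y$ respectively, and then contradict \lemmaref{c1}. The paper phrases the key step as ``$l_1$ intersects $X$ at $x$ with multiplicity $\geq 2$'' rather than via the tangent-direction dictionary, and it leaves the exclusion of $\mathbb{T}_x\cap\mathbb{T}_y=\{x,y\}$ implicit (since an intersection of linear spaces is linear), but these are only differences in exposition.
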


\begin{proof} For the sake of contradiction, assume $z\in \mathbb{T}_x\cap\mathbb{T}_y$ where $z\notin\left\{x,y\right\}$. Consider the lines $l_1=\overline{xz}$, $l_2=\overline{yz}$ (see the right schematic diagram in \figureref{fig1}). Notice that $l_1$ (resp. $l_2$) intersects $X$ at $x$ (resp. $y$) with multiplicity $\geq 2$. Consequently, we get zero-dimensional subschemes $\xi_1\subset X$ and $\xi_2\subset X$ of length 2, with $\textrm{Supp}(\xi_1)=\left\{x\right\}$ and $\textrm{Supp}(\xi_2)=\left\{y\right\}$ with $z\in \langle\xi_1\rangle\cap\langle\xi_2\rangle$. This contradicts \lemmaref{c1}.
\end{proof}

\begin{lemma}\label{c4}
Assume $L$ is $3$-very ample. Let $x\in X$. Then for general $y\in X$, $\mathbb{T}_x\cap\mathbb{T}_y=\emptyset$.
\end{lemma}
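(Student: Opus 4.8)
The plan is to combine \lemmaref{c3} with a dimension count that confines the "bad" locus of $y$ to a proper closed subset of $X$. By \lemmaref{c3}, for any $y\neq x$ with $\mathbb{T}_x\cap\mathbb{T}_y\neq\emptyset$ the intersection is either $\{x\}$ or $\{y\}$; in the first case $x\in\mathbb{T}_y$ and in the second $y\in\mathbb{T}_x$. Hence the set of $y$ for which the tangent spaces meet is contained in $V_1\cup V_2$, where
\[
V_1:=X\cap\mathbb{T}_x\qquad\text{and}\qquad V_2:=\{\,y\in X\mid x\in\mathbb{T}_y\,\}.
\]
Both are closed: $V_1$ is the intersection of $X$ with a linear subspace, and $V_2$ is the preimage under the Gauss morphism $\gamma\colon X\to\mathbb{G}(n,N)$, $y\mapsto\mathbb{T}_y$, of the Schubert variety of $n$-planes through the fixed point $x$. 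It then suffices to prove that each $V_i$ is a \emph{proper} subset of $X$, since then $X\setminus(V_1\cup V_2)$ is a dense open set on which $\mathbb{T}_x\cap\mathbb{T}_y=\emptyset$ for all $y$.

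For $V_1$ the argument is short: if $V_1=X$ then $X\subseteq\mathbb{T}_x$, and since $\dim\mathbb{T}_x=n=\dim X$ with $X$ irreducible this forces $X=\mathbb{T}_x\cong\mathbb{P}^n$, embedded as a linear subspace, so that $L\cong\mathcal{O}_{\mathbb{P}^n}(1)$. But a length-$4$ subscheme supported on a line violates the surjectivity in \definitionref{defva}, so $\mathcal{O}_{\mathbb{P}^n}(1)$ is never $3$-very ample; this contradiction gives $V_1\subsetneq X$.

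The hard part will be showing $V_2\subsetneq X$, i.e.\ that not every embedded tangent space passes through the single point $x$. I would phrase this through the linear projection $\pi_x\colon X\setminus\{x\}\to\mathbb{P}^{N-1}$ away from $x$: for $y\neq x$ the differential $d(\pi_x)_y$ is non-injective precisely when the line $\overline{xy}$ lies in $\mathbb{T}_y$, that is, when $x\in\mathbb{T}_y$; thus $V_2$ is exactly the ramification locus of $\pi_x$. If $V_2=X$, then $\pi_x$ is everywhere ramified, which over $\mathbb{C}$ (generic smoothness) is impossible for a map that is generically finite onto its image. Hence the image of $\pi_x$ would have dimension $<n$, forcing the general line through $x$ to lie on $X$, i.e.\ $X$ a cone with vertex $x$; but a smooth variety that is a cone over its vertex $x\in X$ must be a linear space, which is again excluded by $3$-very ampleness exactly as for $V_1$. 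Therefore $V_2\subsetneq X$, and combining the two bounds shows that $\mathbb{T}_x\cap\mathbb{T}_y=\emptyset$ for general $y\in X$.
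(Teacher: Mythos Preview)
Your proof is correct but takes a genuinely different route from the paper's, particularly in handling $V_2$. The paper also reduces via \lemmaref{c3} to the two cases $y\in\mathbb{T}_x$ and $x\in\mathbb{T}_y$. For the first it simply invokes that $X\subset\mathbb{P}(H^0(L))$ is non-degenerate (the embedding is by a complete linear series), so $X\not\subseteq\mathbb{T}_x$; your argument that $X=\mathbb{T}_x$ would force $L\cong\mathcal{O}_{\mathbb{P}^n}(1)$, which is not $3$-very ample, reaches the same conclusion slightly less directly. The real divergence is for $V_2$: rather than your projection/generic smoothness argument, the paper observes that if two distinct points $y_1,y_2\neq x$ satisfied $x\in\mathbb{T}_{y_i}$, then each line $\overline{xy_i}\subset\mathbb{T}_{y_i}$ would be tangent to $X$ at $y_i$, yielding length-$2$ subschemes $\xi_i$ with disjoint supports $\{y_i\}$ and $x\in\langle\xi_1\rangle\cap\langle\xi_2\rangle$, contradicting \lemmaref{c1}. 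This is shorter, stays within the paper's internal toolkit, and actually proves the stronger fact that $V_2\setminus\{x\}$ contains at most one point. Your approach, on the other hand, is more self-contained geometrically and does not rely on \lemmaref{c1}, at the cost of the extra projection and cone argument.
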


\begin{proof} Since $X\subset \mathbb{P}(H^0(L))$ is non-degenerate (in particular, $X\not\subseteq\mathbb{T}_x$), for general $y\in X$, we have $\mathbb{T}_x\cap\mathbb{T}_y\neq\left\{y\right\}$. For $y_1,y_2\in X$ distinct points, both distinct from $x$, assume $\mathbb{T}_x\cap\mathbb{T}_{y_1}=\mathbb{T}_x\cap\mathbb{T}_{y_2}=\left\{x\right\}$. Then, as in the proof of \lemmaref{c3}, working with $\overline{xy_1}$ and $\overline{xy_2}$, we obtain length 2 subschemes $\xi_1,\xi_2\subset X$ with $\textrm{Supp}(\xi_1)\cap\textrm{Supp}(\xi_2)=\emptyset$ and $x\in \langle\xi_1\rangle\cap\langle\xi_2\rangle$, which contradicts \lemmaref{c1}. The conclusion follows from \lemmaref{c3}.
\end{proof} 

\begin{proof}[Proof of \propositionref{stronglog} (\ref{stronglog2}).] In view of (1), we suppose to the contrary that $\Sigma_{\text{sing}} \subsetneq X$, and fix $x\in X\backslash{\Sigma_{\text{sing}}}$. Then $\dim \mathbb{T}_x\Sigma=2n+1$. For general $y\in X$, applying Terracini's lemma (cf.~\cite[Lemma 3.4.28]{Laz}), we have that $\textrm{span}(\mathbb{T}_x,\mathbb{T}_y)\subseteq \mathbb{T}_x\Sigma$. By \lemmaref{c4}, $\dim(\textrm{span}(\mathbb{T}_x,\mathbb{T}_y))=2n+1=\dim \mathbb{T}_x\Sigma$. Thus $\textrm{span}(\mathbb{T}_x,\mathbb{T}_y)=\mathbb{T}_x\Sigma$. On the other hand, by the generality of $y$, $\Sigma$ is smooth at $y$, so another application of Terracini's lemma yields that $\textrm{span}(\mathbb{T}_x,\mathbb{T}_y)=\mathbb{T}_y\Sigma$. Therefore $X\subseteq \mathbb{T}_x\Sigma$. Recall that $X\subset\mathbb{P}(H^0(L))$ is non-degenerate, so  $\mathbb{T}_x\Sigma=\mathbb{P}(H^0(L))$. Since $x$ is a smooth point of $\Sigma$, we obtain $\Sigma=\mathbb{P}(H^0(L))$, a contradiction.
\end{proof}

From here until the end of \S \ref{s2}, we tacitly assume that $L$ is 3-very ample. In this case, $\Sigma_{\textrm{sing}}\subseteq X$ and $\mathbb{P}(\mathcal{E}_L)\subset X^{[2]}\times\mathbb{P}(H^0(L))$ together with the second projection provides a natural resolution of singularities $t:\mathbb{P}(\mathcal{E}_L)\to \Sigma(X,L)$ by \propositionref{stronglog}. It follows from \cite[Lemma 3.8]{Ver} that scheme-theoretically we can identify the exceptional divisor $t^{-1}(X)\cong\Phi$ and the restriction of $t$ on it coincides with the surjection $q:\Phi\to X$. \\

As an immediate consequence of the above discussion and \propositionref{stronglog}, we obtain

\begin{corollary}
    Assume $L$ is $3$-very ample and $\Sigma\neq\mathbb{P}(H^0(L))$. Then the morphism $t$ is a strong log resolution of $\Sigma$.
\end{corollary}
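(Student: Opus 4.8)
The plan is to verify directly the four requirements encoded in \definitionref{defsl}: properness of $t$, smoothness of the source, the isomorphism over the smooth locus, and the simple normal crossings condition on the preimage of the singular locus. Most of these are immediate from the construction together with \propositionref{stronglog}, so the proof amounts to assembling the pieces already in place.

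First I would dispatch properness and smoothness. Since $\mathbb{P}(\mathcal{E}_L)\subset X^{[2]}\times\mathbb{P}(H^0(L))$ and $\Sigma\subset\mathbb{P}(H^0(L))$ are both projective, $t$ is a morphism of projective varieties and hence proper. Smoothness of $\mathbb{P}(\mathcal{E}_L)$ is equally formal: it is a projective bundle over the smooth variety $X^{[2]}$. Next I would pin down the smooth locus. Under the standing hypotheses ($L$ is $3$-very ample and $\Sigma\neq\mathbb{P}(H^0(L))$), \propositionref{stronglog}~(\ref{stronglog2}) gives $\Sigma_{\textrm{sing}}=X$, so that $\Sigma_{\textrm{sm}}=\Sigma\backslash X$. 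Then \propositionref{stronglog}~(\ref{stronglog1}) asserts precisely that $t$ restricts to an isomorphism from $\mathbb{P}(\mathcal{E}_L)\backslash t^{-1}(X)$ onto $\Sigma\backslash X$, which is exactly the required isomorphism over $\Sigma_{\textrm{sm}}$.

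The only substantive point that remains is that $t^{-1}(\Sigma_{\textrm{sing}})_{\textrm{red}}=t^{-1}(X)_{\textrm{red}}$ is a simple normal crossings divisor. Here I would invoke the scheme-theoretic identification $t^{-1}(X)\cong\Phi$ recorded above from \cite[Lemma 3.8]{Ver}. Because $\Phi\cong\textrm{Bl}_{\Delta}(X\times X)$ is the blow-up of the smooth variety $X\times X$ along the smooth center $\Delta$, it is itself smooth and irreducible; in particular it is reduced, so $t^{-1}(X)_{\textrm{red}}=t^{-1}(X)\cong\Phi$. A dimension count confirms it is a divisor: $\dim\Phi=\dim(X\times X)=2n$, one less than $\dim\mathbb{P}(\mathcal{E}_L)=\dim\Sigma=2n+1$. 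I expect this last step to be the crux of the argument, but the blow-up description of $\Phi$ resolves it cleanly: a reduced, smooth, irreducible divisor trivially has simple normal crossings, so no transversality among several components ever needs to be checked. This completes the verification of all conditions in \definitionref{defsl}, and thus $t$ is a strong log resolution of $\Sigma$.
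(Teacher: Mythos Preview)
Your proof is correct and follows exactly the approach the paper indicates: the paper states the corollary as ``an immediate consequence of the above discussion and \propositionref{stronglog}'' without further argument, and you have simply unpacked that sentence by verifying each clause of \definitionref{defsl} using \propositionref{stronglog} together with the scheme-theoretic identification $t^{-1}(X)\cong\Phi$ from \cite[Lemma~3.8]{Ver}.
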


Strictly speaking, we don't use that $t$ is a {\it strong} log resolution when $\Sigma\neq\mathbb{P}(H^0(L))$ in the proof of \theoremref{prerat}. This is because, to check whether $\Sigma$ has pre-$1$-rational singularities through its birational description, we only need a resolution which is an isomorphism outside a locus of codimension at least two. The morphism $t$ satisfies this when $L$ is $3$-very ample as in this case $\textrm{codim}_{\Sigma}(X)=n+1\geq 2$. See \remarkref{alex} for more details.

\medskip

In summary, for any $x\in X$ we have the following diagram with Cartesian squares where the vertical arrows are surjections
\begin{equation}\label{ulldiag}
    \begin{tikzcd}
    F_x\arrow[r, hook]\arrow[d] & \Phi\arrow[r, hook]\arrow[d, "q"] & \mathbb{P}(\mathcal{E}_L)\arrow[d, "t"]\arrow[dr, "f"] &\\
    \{x\}\arrow[r, hook] & X\arrow[r, hook] & \Sigma\arrow[r, hook] & \mathbb{P}(H^0(L))
\end{tikzcd}
\end{equation}
and $F_x\cong\textrm{Bl}_xX$, the blow-up of $X$ at $x$. We set $b_x:F_x\cong\textrm{Bl}_xX\to X$ to be the blow-up morphism. In the sequel, we will often use the fact that the map $q:\Phi\to X$ is smooth by \cite[Lemma 2.1]{CS18} without any further reference. 

\subsubsection{Useful isomorphisms and exact sequences}\label{s2} 
We start by recalling some basic facts about the log resolution of $\Sigma$ described in \S \ref{s1} that are used crucially in the proofs of our main results. 
\smallskip

First of all, by \cite[Proof of Lemma 2.3]{Ull16}, we have the isomorphisms
\begin{equation}\label{0}
    \mathcal{N}_{F_x/\Phi}^*\cong\mathcal{O}_{F_x}^{\oplus n}\textrm{, and } \mathcal{N}^*_{\Phi/\mathbb{P}(\mathcal{E}_L)}|_{F_x}\cong b_x^*L(-2E_x),
\end{equation}
where $E_x$ is the exceptional divisor of $b_x$. Moreover, by \cite[Proof of Lemma 2.3]{Ull16}, the normal bundle sequence of $F_x\subset\Phi\subset\mathbb{P}(\mathcal{E}_L)$ is split. Consequently, by using \eqref{0} one obtains the following isomorphism: 
\begin{equation}\label{1}
    \mathcal{N}_{F_x/\mathbb{P}(\mathcal{E}_L)}^*\cong \mathcal{O}_{F_x}^{\oplus n}\oplus b_x^*L(-2E_x).
\end{equation}
Denoting the ideal sheaf of $F_x\subset\mathbb{P}(\mathcal{E}_L)$ by $\mathcal{I}_{F_x}$, and using \eqref{1}, we also obtain the isomorphisms
\begin{equation}\label{sym}
    \mathcal{I}_{F_x}^j/\mathcal{I}_{F_x}^{j+1}\cong\textrm{Sym}^j\mathcal{N}_{F_x/\mathbb{P}(\mathcal{E}_L)}^*\cong \bigoplus\limits_{m=0}^j\left[b_x^*(mL)(-2mE_x)\right]^{\oplus \binom{n+j-m-1}{n-1}}.
\end{equation}

Next, observe that we have the following commutative diagram with exact rows and columns
for $p\geq 1$:
\begin{equation}\label{omega}
\begin{tikzcd}
    & & 0\arrow[d] & 0\arrow[d] &\\
    0\arrow[r] & \Omega^p_{\mathbb{P}(\mathcal{E}_L)}(\log\Phi)(-\Phi)\arrow[r]\arrow[d, equal] & \Omega^p_{\mathbb{P}(\mathcal{E}_L)}\arrow[r]\arrow[d] & \Omega^p_{\Phi}\arrow[r]\arrow[d] & 0\\
    0\arrow[r] & \Omega^p_{\mathbb{P}(\mathcal{E}_L)}(\log\Phi)(-\Phi)\arrow[r] & \Omega^p_{\mathbb{P}(\mathcal{E}_L)}(\log\Phi)\arrow[r]\arrow[d] & \Omega^p_{\mathbb{P}(\mathcal{E}_L)}(\log\Phi)|_{\Phi}\arrow[r]\arrow[d] & 0\\
    & & \Omega^{p-1}_{\Phi}\arrow[r,equal]\arrow[d] &\mathcal{F}\arrow[d] &\\
    && 0 & 0 &
\end{tikzcd}
\end{equation}
Since $\Omega_{\Phi}^{p-1}$ is locally free, for any $x\in X$, restricting the resulting exact sequence appearing in the right vertical column on $F_x$, we obtain the following short exact sequence for any $p\geq 1$:
\begin{equation}\label{ex1}
    0\to{\Omega}^p_{\Phi}|_{F_x}\to \Omega^p_{\mathbb{P}(\mathcal{E}_L)}(\log\Phi)|_{F_x}\to\Omega^{p-1}_{\Phi}|_{F_x}\to 0.
\end{equation}
The above exact sequence will be essential for us in the sequel.\\

We now prove a proposition crucially needed in the proof of \theoremref{thm2.3}: 
  
\begin{proposition}\label{iffeq} There is an isomorphism
\begin{equation}\label{iffeq1}
    H^k(\Phi, \mathcal{O}_{\Phi}) \cong \bigoplus\limits_{j=0}^k H^{k-j}(X, H^j(X,\mathcal{O}_X)\otimes \mathcal{O}_X).
\end{equation}
In particular, 
\begin{equation}\label{iffeq2}
    h^{0,k}(\Phi) = h^{0,k}(X)h^{0,0}(X) + h^{0,k-1}(X)h^{0,1}(X) + \cdots + h^{0,0}(X)h^{0,k}(X).
\end{equation}
\end{proposition}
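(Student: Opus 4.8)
The computation rests entirely on the identification $\Phi\cong\textrm{Bl}_{\Delta}(X\times X)$ established above, together with the fact that structure-sheaf cohomology is insensitive to resolving a smooth variety. Concretely, since $b_{\Delta}\colon\Phi\to X\times X$ is the blow-up of the smooth variety $X\times X$ along the smooth center $\Delta\cong X$, I would first record the vanishing
\begin{equation*}
    \derR b_{\Delta *}\mathcal{O}_{\Phi}\cong\mathcal{O}_{X\times X},
\end{equation*}
i.e. $b_{\Delta *}\mathcal{O}_{\Phi}=\mathcal{O}_{X\times X}$ and $R^i b_{\Delta *}\mathcal{O}_{\Phi}=0$ for $i\geq 1$. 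This is the standard behavior of blow-ups along smooth centers (the fibres over $\Delta$ are projective spaces $\mathbb{P}^{n-1}$, which have no higher structure-sheaf cohomology), and it also follows from $X\times X$ being smooth, hence having rational singularities. From this, the (derived) projection formula, or equivalently the Leray spectral sequence for $b_{\Delta}$, immediately gives $H^k(\Phi,\mathcal{O}_{\Phi})\cong H^k(X\times X,\mathcal{O}_{X\times X})$ for every $k$.

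Next I would apply the Künneth formula to the product $X\times X$, obtaining
\begin{equation*}
    H^k(X\times X,\mathcal{O}_{X\times X})\cong\bigoplus_{j=0}^{k}H^{j}(X,\mathcal{O}_X)\otimes H^{k-j}(X,\mathcal{O}_X).
\end{equation*}
Since each $H^{j}(X,\mathcal{O}_X)$ is a finite-dimensional vector space and $H^{k-j}(X,V\otimes\mathcal{O}_X)\cong V\otimes H^{k-j}(X,\mathcal{O}_X)$ for any such $V$, setting $V=H^{j}(X,\mathcal{O}_X)$ rewrites each Künneth summand in the shape of \eqref{iffeq1}, which proves \eqref{iffeq1}. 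The numerical identity \eqref{iffeq2} then follows by taking dimensions, using $h^{0,k}(\Phi)=\dim H^k(\Phi,\mathcal{O}_{\Phi})$ and $h^{0,j}(X)=\dim H^j(X,\mathcal{O}_X)$, so that the direct-sum decomposition becomes the stated convolution.

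It is worth noting that the precise indexing in \eqref{iffeq1} is exactly what the Leray spectral sequence of the smooth morphism $q\colon\Phi\to X$ produces: writing $q=p_1\circ b_{\Delta}$ and using the vanishing above gives $R^j q_*\mathcal{O}_{\Phi}\cong R^j p_{1*}\mathcal{O}_{X\times X}\cong H^j(X,\mathcal{O}_X)\otimes\mathcal{O}_X$ by flat base change, so that $E_2^{k-j,j}=H^{k-j}(X,H^j(X,\mathcal{O}_X)\otimes\mathcal{O}_X)$ is precisely the right-hand side of \eqref{iffeq1}. The spectral sequence degenerates because $\derR q_*\mathcal{O}_{\Phi}=\derR p_{1*}\mathcal{O}_{X\times X}$ splits as the sum of its cohomology sheaves (equivalently, Künneth degenerates on the product). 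Either route works; the whole argument is formal once the blow-up vanishing is in hand.

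The only step requiring genuine care is that vanishing $\derR b_{\Delta *}\mathcal{O}_{\Phi}\cong\mathcal{O}_{X\times X}$, which I would justify either through the projective-bundle structure of the exceptional divisor $E=\mathbb{P}(\mathcal{N}_{\Delta/X\times X})$ combined with the formal functions theorem, or more cheaply by invoking rational singularities of the smooth variety $X\times X$; everything else is a direct application of Künneth and a dimension count. I would also emphasize that no positivity hypothesis on $L$ is used here, so the statement is valid for $\Phi$ attached to any very ample $L$.
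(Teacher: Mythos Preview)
Your argument is correct, but it follows a different route from the one the paper writes out. The paper works directly with the smooth morphism $q\colon\Phi\to X$: since $q$ is smooth, Deligne's decomposition $\derR q_*\mathbb{C}_{\Phi}\cong\bigoplus R^jq_*\mathbb{C}_{\Phi}[-j]$ holds, and taking $\gr_F^0$ yields the splitting $\derR q_*\mathcal{O}_{\Phi}\cong\bigoplus R^jq_*\mathcal{O}_{\Phi}[-j]$; combining this with $R^jq_*\mathcal{O}_{\Phi}\cong H^j(X,\mathcal{O}_X)\otimes\mathcal{O}_X$ (quoted from \cite{CS18}) and passing to hypercohomology gives \eqref{iffeq1}. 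Your approach instead exploits $\Phi\cong\textrm{Bl}_{\Delta}(X\times X)$ to reduce to $H^k(X\times X,\mathcal{O}_{X\times X})$ and then invokes K\"unneth, which is exactly the ``more elementary proof'' the paper alludes to in the remark immediately following the proposition. Your method avoids any appeal to mixed Hodge theory or Deligne's theorem and makes clear that no positivity on $L$ is needed, at the cost of not isolating the sheaves $R^jq_*\mathcal{O}_{\Phi}$ themselves; the paper's route, by contrast, packages the degeneration of the Leray spectral sequence for $q$ in a way that parallels later computations (e.g.\ for $\Omega^1_{\Phi}$) where the blow-up shortcut is less direct.
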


\begin{proof} We note first that since the map $q$ is smooth, we have an isomorphism 
\begin{equation}\label{del}
    \textbf{R}q_*\mathcal{O}_{\Phi} \cong \bigoplus R^jq_*\mathcal{O}_{\Phi}[-j]
\end{equation} in ${\bf D}^b(\text{Coh}(X))$, the bounded derived category of coherent sheaves on $X$ \cite[Theorem 6.1]{deligne 68} (note that this is an instance of the Decomposition Theorem and taking the Hodge degree 0 \cite[Theorem 1]{saito88} in the simpler case when the map is smooth and projective).
%$$\textbf{R}q_*\mathbb{C}_{\Phi} \cong \bigoplus R^jq_*\mathbb{C}_{\Phi}[-j]$$ \cite[Theorem 2.11]{deligne68}, and taking $\rm{gr}^0_F$, where $F$ is the Hodge filtration. 
Moreover, 
\begin{equation}\label{cs}
    R^jq_*\mathcal{O}_{\Phi} \cong H^j(X,\mathcal{O}_X)\otimes \mathcal{O}_X
\end{equation} 
by \cite[Lemma 2.2]{CS18}. Taking hypercohomology $\mathbb{H}^k$ of \eqref{del}, this says that $$H^k(\Phi, \mathcal{O}_{\Phi}) \cong \bigoplus H^{k-j}(X, H^j(X,\mathcal{O}_X)\otimes \mathcal{O}_X)$$ which is \eqref{iffeq1}. Lastly, \eqref{iffeq2} follows immediately from this.
\end{proof}

We remark that there is a more elementary proof of \eqref{iffeq2} using the fact that $\Phi$ is the blow-up of $X\times X$ along the diagonal. Lastly, we compute the direct and higher direct images of $\Omega_{\Phi}^1$ that will be required in the proof of \theoremref{prerat}:

\begin{lemma}\label{ext}
    The following statements hold:
    \begin{enumerate}
        \item If $n=1$, then we have the following isomorphisms for all $j$:
        $$R^jq_*\Omega_{\Phi}^1\cong \left[H^j(\mathcal{O}_X)\otimes \Omega_X^1\right]\oplus \left[H^j(\Omega_X^1)\otimes\mathcal{O}_X\right].$$
        \item Assume $n\geq 2$. Then:
        \begin{itemize}
            \item[(i)] $R^jq_*\Omega_{\Phi}^1\cong \left[H^j(\mathcal{O}_X)\otimes \Omega_X^1\right]\oplus \left[H^j(\Omega_X^1)\otimes\mathcal{O}_X\right]$ for all $j\neq 1$;
            \item[(ii)] We have an exact sequence $$0\to \left[H^1(\mathcal{O}_X)\otimes \Omega_X^1\right]\oplus \left[H^1(\Omega_X^1)\otimes \mathcal{O}_X\right]\to R^1q_*\Omega_{\Phi}^1\to\mathcal{O}_X\to 0.$$
        \end{itemize}
    \end{enumerate}
\end{lemma}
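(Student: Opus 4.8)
The plan is to factor the pushforward as $\mathbf{R}q_* = \mathbf{R}p_{1*}\circ\mathbf{R}(b_\Delta)_*$, where $b_\Delta\colon\Phi\cong\mathrm{Bl}_\Delta(X\times X)\to X\times X$ is the blow-up along the diagonal $\Delta\cong X$ (of codimension $n$) and $p_1\colon X\times X\to X$ is the first projection, so that $q=p_1\circ b_\Delta$. The point is that each of the two factors is easy to handle: $b_\Delta$ via the blow-up formula, and $p_1$ via the K\"unneth decomposition.

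First I would compute $\mathbf{R}(b_\Delta)_*\Omega^1_\Phi$. Writing $\iota\colon\Delta\hookrightarrow X\times X$ for the inclusion and $\mathcal{E}\subset\Phi$ for the exceptional divisor, the cotangent sequence of the blow-up
\[
0\to b_\Delta^*\Omega^1_{X\times X}\to\Omega^1_\Phi\to\Omega^1_{\Phi/(X\times X)}\to 0
\]
is exact, with cokernel a sheaf supported on $\mathcal{E}=\mathbb{P}(\mathcal{N}_{\Delta/X\times X})$ whose relative pushforward along $\mathcal{E}\to\Delta$ is $\mathcal{O}_\Delta[-1]$, because $H^\bullet(\mathbb{P}^{n-1},\Omega^1_{\mathbb{P}^{n-1}})$ is one-dimensional and concentrated in degree $1$. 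Combining this with $\mathbf{R}(b_\Delta)_*\mathcal{O}_\Phi=\mathcal{O}_{X\times X}$ and the projection formula yields
\[
\mathbf{R}(b_\Delta)_*\Omega^1_\Phi\cong\Omega^1_{X\times X}\oplus\iota_*\mathcal{O}_\Delta[-1]\qquad(n\geq 2),
\]
with the second summand absent when $n=1$ (where $b_\Delta$ is an isomorphism and $\Phi=X\times X$). I expect the splitting here to be the main obstacle: the cotangent sequence only produces an exact triangle a priori, and upgrading it to a genuine direct sum is what makes the subsequent cohomology computation immediate (in particular the degree-$2$ isomorphism, which otherwise requires checking that a connecting map vanishes). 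This direct sum can be justified either by the classical blow-up formula for $\mathbf{R}(b_\Delta)_*\Omega^p$, or by applying the decomposition theorem to $\mathbb{Q}_\Phi^H$ and passing to $\mathrm{gr}^1_F\mathrm{DR}$ — the latter being in the same spirit as the use of Deligne's theorem in \propositionref{iffeq}.

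Next I would apply $\mathbf{R}p_{1*}$ to each summand. Since $\Omega^1_{X\times X}=p_1^*\Omega^1_X\oplus p_2^*\Omega^1_X$, the projection formula together with $R^jp_{1*}\mathcal{O}_{X\times X}\cong H^j(\mathcal{O}_X)\otimes\mathcal{O}_X$ and the K\"unneth formula give
\[
\mathcal{H}^j\bigl(\mathbf{R}p_{1*}\Omega^1_{X\times X}\bigr)\cong H^j(\mathcal{O}_X)\otimes\Omega^1_X\ \oplus\ H^j(\Omega^1_X)\otimes\mathcal{O}_X,
\]
while $p_1\circ\iota\colon\Delta\to X$ being an isomorphism gives $\mathbf{R}p_{1*}(\iota_*\mathcal{O}_\Delta[-1])\cong\mathcal{O}_X[-1]$, contributing a copy of $\mathcal{O}_X$ to cohomology only in degree $1$. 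Taking cohomology sheaves now finishes the proof: for $n=1$ the extra summand is absent and one reads off part (1) in every degree; for $n\geq 2$ the summand $\mathcal{O}_X[-1]$ affects only $\mathcal{H}^1$, yielding the isomorphisms of (2)(i) for all $j\neq 1$ and, in degree $1$, the short exact sequence of (2)(ii). Since the computation rests on a direct sum decomposition, the sequence in (ii) is in fact split, so no separate analysis of a connecting homomorphism is needed.
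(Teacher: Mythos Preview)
Your argument is correct and follows the same overall architecture as the paper's proof: factor $q$ as $p_1\circ b_\Delta$, use the cotangent sequence of the blow-up to relate $\Omega^1_\Phi$ to $b_\Delta^*\Omega^1_{X\times X}$ and the relative cotangent sheaf of the exceptional divisor, compute $R^jp_{1*}\Omega^1_{X\times X}$ by K\"unneth, and identify the contribution of the exceptional piece as $\mathcal{O}_X$ in degree $1$.

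The one substantive difference is exactly the point you flagged. The paper does \emph{not} invoke the splitting $\mathbf{R}(b_\Delta)_*\Omega^1_\Phi\cong\Omega^1_{X\times X}\oplus\iota_*\mathcal{O}_\Delta[-1]$; it works only with the exact triangle, and consequently must show by hand that the connecting map $\mathcal{O}_X\to R^2p_{1*}\Omega^1_{X\times X}$ vanishes. It does this by contradiction: if the map were nonzero it would be injective, forcing $R^1p_{1*}\Omega^1_{X\times X}\cong R^1q_*\Omega^1_\Phi$, and then a count of $h^{1,1}$ via the Leray spectral sequence and the blow-up formula for Hodge numbers of $\Phi$ gives a numerical contradiction. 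Your appeal to the decomposition theorem (or the classical blow-up formula for $\mathbf{R}b_*\Omega^p$) bypasses this entirely and is cleaner; it also yields, as you note, that the sequence in (2)(ii) is split, which the paper does not state. On the other hand, the paper's route is self-contained and avoids citing the splitting as a black box.
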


\begin{proof} Let us denote by $p_1$ and $p_2$ the two projections from $X\times X$ to its factors. Using the diagram 
\[
\begin{tikzcd}
    X\times X\arrow[r, "p_2"]\arrow[d, "p_1"] & X\arrow[d, "q_2"]\\
    X\arrow[r, "q_1"] & \bullet
\end{tikzcd}
\]
and flat base change, we deduce that $R^j{p_1}_*p_2^*\Omega_X^1\cong q_1^*R^j{q_2}_*\Omega_X^1\cong H^j(\Omega_X^1)\otimes \mathcal{O}_X$. Combining this with projection formula and $R^j{p_1}_*\mathcal{O}_{X\times X}\cong H^j(\mathcal{O}_X)\otimes \mathcal{O}_X$, we obtain for all $j$
\begin{equation}\label{rjexplicit}
    R^j{p_1}_*\Omega_{X\times X}^1\cong R^j{p_1}_*(p_1^*\Omega_X^1\oplus  p_2^*\Omega_X^1)\cong \left[H^j(\mathcal{O}_X)\otimes \Omega_X^1\right]\oplus \left[H^j(\Omega_X^1)\otimes\mathcal{O}_X\right].
\end{equation}
We recall that $q=p_1\circ b_{\Delta}$ and note that we have the following commutative diagram:
\[
\begin{tikzcd}
    \mathbb{P}^{n-1}\cong E_x\arrow[r, hook]\arrow[d] & E_{\Delta}\arrow[r, hook, "j_{\Delta}'"]\arrow[d, "q_{\Delta}"] & \Phi\cong\textrm{Bl}_{\Delta}(X\times X)\arrow[d, "b_{\Delta}"]\arrow[dd, bend left=60, "q"]\\
    \left\{(x,x)\right\}\arrow[r, hook]& \Delta\arrow[r, hook, "j_{\Delta}"]\arrow{dr}{\cong}[swap]{p_0} & X\times X\arrow[d, "p_1"]\\
    & & X
\end{tikzcd}
\]
When $n=1$, $b_{\Delta}$ is an isomorphism, whence $R^jq_*\Omega_{\Phi}^1\cong R^j{p_1}_*\Omega_{X\times X}^1$ and the conclusion follows from \eqref{rjexplicit}. This proves (1).

Now assume $n\geq 2$. Recall that $R^j{b_{\Delta}}_*\mathcal{O}_{\Phi}=0$ for all $j\geq 1$, and ${b_{\Delta}}_*\mathcal{O}_{\Phi}\cong \mathcal{O}_{X\times X}$. Using Leray spectral sequence and projection formula, we obtain the following isomorphisms for all $j$: 
\begin{equation}\label{isolcd}
    R^jq_*b_{\Delta}^*\Omega_{X\times X}^1\cong R^j{p_1}_*\Omega_{X\times X}^1\,\textrm{ and } R^jq_*{j'_{\Delta}}_*\Omega_{E_{\Delta}/\Delta}^1\cong {p_0}_*R^j{q_{\Delta}}_*\Omega_{E_{\Delta}/\Delta}^1.
\end{equation}
Notice that $q_{\Delta}:E_{\Delta}\cong \mathbb{P}({\mathcal{N}^*_{\Delta}})\to\Delta$ is the structure morphism of the projective bundle, where $\mathcal{N}_{\Delta}$ is the normal bundle of $\Delta\hookrightarrow X\times X$. Consequently, passing to the long exact sequence corresponding to ${q_\Delta}_*$ of the exact sequence $$0\to \Omega_{E_{\Delta}/\Delta}^1\to q_{\Delta}^*\mathcal{N}^*_{\Delta}(-1)\to\mathcal{O}_{E_{\Delta}}\to 0,$$ we obtain 
\begin{equation}\label{isolcd'}
    R^j{q_{\Delta}}_*\Omega_{E_{\Delta}/\Delta}^1\cong \begin{cases} 
      \mathcal{O}_{\Delta} & \textrm{if }\, j=1; \\
      0 & \textrm{otherwise}.
   \end{cases}
\end{equation}
Also, passing to the long exact sequence corresponding to $q_*$ of the following short exact sequence 
$$0\to b_{\Delta}^*\Omega_{X\times X}^1\to\Omega_{\Phi}^1\to {j'_{\Delta}}_*\Omega_{E_{\Delta}/\Delta}^1\to 0,$$
and using \eqref{isolcd}, \eqref{isolcd'} we obtain the isomorphisms
\begin{equation}\label{isolcd''}
\begin{gathered}
    q_*\Omega_{\Phi}^1\cong {p_1}_*\Omega_{X\times X}^1,\\
    R^jq_*\Omega_{\Phi}^1\cong R^j{p_1}_*\Omega_{X\times X}^1\,\textrm{ for all }\, j\geq 3.
\end{gathered}
\end{equation}
Moreover, we also obtain the following exact sequence
\begin{equation}\label{leslcd}
    0\to R^1{p_1}_*\Omega_{X\times X}^1\to R^1q_*\Omega_{\Phi}^1\to \mathcal{O}_X\to R^2{p_1}_*\Omega_{X\times X}^1\to R^2q_*\Omega_{\Phi}^1\to 0.
\end{equation}
Now, the map $$\mathcal{O}_X\to R^2{p_1}_*\Omega_{X\times X}^1$$ is injective if it is non-zero. We claim that it is the zero map. Indeed, for otherwise $R^1{p_1}_*\Omega_{X\times X}^1\cong R^1q_*\Omega_{\Phi}^1$. The Leray spectral sequence $$E_2^{i,j}:=H^i(R^jq_*\Omega_{\Phi}^1)\implies H^{i+j}(\Omega_{\Phi}^1),$$
being a first quadrant spectral sequence, induces the exact sequence $$0\to E_2^{1,0}\to H^1(\Omega_{\Phi}^1)\to E_2^{0,1}\to E_2^{2,0}.$$ We conclude that 
\begin{equation}\label{ineq1}
    h^0(R^1{p_1}_*\Omega_{X\times X}^1)=h^0(R^1q_*\Omega_{\Phi}^1)\geq h^1(\Omega_{\Phi}^1)-h^1(q_*\Omega_{\Phi}^1).
\end{equation}
By \eqref{rjexplicit}, we compute 
\begin{equation}\label{ineq2}
    h^0(R^1{p_1}_*\Omega_{X\times X}^1)=(h^{1,0}(X))^2+h^{1,1}(X).
\end{equation}
On the other hand, $h^1(\Omega_{\Phi}^1)-h^1(q_*\Omega_{\Phi}^1)=h^1(\Omega_{X\times X}^1)+1-h^1(q_*\Omega_{\Phi}^1)$ as $\Phi\cong \textrm{Bl}_{\Delta}(X\times X)$, whence using \eqref{rjexplicit} once again, this time to calculate $h^1(q_*\Omega_{\Phi}^1)$ using \eqref{isolcd''}, we obtain 
\begin{equation}\label{ineq3}
    h^1(\Omega_{\Phi}^1)-h^1(q_*\Omega_{\Phi}^1)=(h^{1,0}(X))^2+h^{1,1}(X)+1.
\end{equation}
But \eqref{ineq2} and \eqref{ineq3} contradicts \eqref{ineq1}. Thus, \eqref{leslcd} breaks off into the desired short exact sequence and the isomorphism. This, combined with \eqref{isolcd''} and \eqref{rjexplicit} proves (2). 
\end{proof}

\subsection{$(Q_p)$-property and pluri-adjoint series}\label{secpl} Let $X$ be a smooth projective variety of dimension $n$. Let $\mathcal{I}_x$ be the ideal sheaf of $x\in X$. We now formally introduce the following:

\begin{definition}\label{qp}
Let $p\geq 0$ be an integer, and let $L$ be a 3-very ample line bundle on $X$. Then $L$ is said to satisfy {\it $(Q_p)$-property} if the following conditions are satisfied for all $x\in X$:
\begin{description}
    \item[$(Q1)$] the natural map $\text{Sym}^iH^0(L\otimes \mathcal{I}_x^2)\to H^0(L^{\otimes i}\otimes \mathcal{I}_x^{2i})$ is surjective for all $i\geq 1$ (this is equivalent to requiring $b_x^*L(-2E_x)$ is projectively normal),
    \item[$(Q2)$] $b_x^*L(-2E_x)$ is ample\footnote{If we assume $L$ is 3-jet ample (c.f. page 16), then $(Q2)$ is automatic by \cite[Proposition 2.7]{BDS99}. Also, an interested reader may verify that if an arbitrary line bundle $L$ (not necessarily assumed to be 3-very ample) satisfies $(Q1)$ and $(Q2)$, then it is almost immediate from an observation of Mumford (\cite[page 38]{Mum}) that $b_x^*L(-2E_x)$ is very ample for all $x\in X$, whence $L$ is also very ample.},
    \item[$(Q3_p)$] $H^i(\Omega^q_{F_x}\otimes b_x^*(jL)(-2jE_x))=0$ for all $i,j\geq 1$, $0\leq q\leq p$.
\end{description}
\end{definition}

Examples where $(Q3_p)$ is satisfied include the case when $(Q2)$ is satisfied and $F_x$ satisfies Bott vanishing for all $x\in X$. We proceed to show that the line bundles as in the set-up below satisfy $(Q_p)$-property for suitable $p$.

\begin{set-up}\label{setup}Let $X$ be a smooth projective variety of dimension $n$. Let $L$ be a line bundle on $X$ that satisfies:
\begin{itemize}
    \item If $n=1$, we assume $\mathrm{deg}(L)\geq 2g+3$.
    \item If $n\geq 2$, then $L=L_{l,d}:=lK_X+dA+B$ where $A$ and $B$ are very ample and nef line bundles respectively. We additionally assume for a given $s\in\mathbb{N}$ (which will be specified in the statement of our results) that $(l,d)\in\mathbb{N}\times\mathbb{N}$ satisfies the following conditions\footnote{Here we introduce the convention that $\binom{a}{b}=0$ if $b<0$ or if $b>a$ or if $a=0$.}:
\begin{equation}\label{co3}
\begin{array}{c}
l\geq \max\limits_{-1\leq i\leq s-1}\left\{\binom{n-1}{i}+1\right\},\\[10pt]
d\geq \max\left\{\begin{array}{l}
  l(n+2)+2,l(n+1)+2+s,(n+1)(l+1), \max\limits_{-1\leq i\leq s-1}\left\{d_{n,i}\right\}   
\end{array}\right\}
\end{array}
\end{equation}
where 
\small
$$d_{n,i}=\max\left\{(n+2)\left(l-\binom{n-1}{i}-1\right)+2(i+1)\left(\binom{n}{i+1}+1\right), 2(i+1)\left(\binom{n}{i+1}+1\right)+1\right\}.$$
\end{itemize}  
\end{set-up}

\normalsize
Two remarks are in order:

\begin{remark}\label{rmkva}
    We note that in the situation of \setupref{setup} (by which we mean, in particular, that $(l,d)$ satisfies \eqref{co3} for some $s\geq 0$ when $n\geq 2$), $L$ is 3-very ample. This is evident for $n=1$. To see this for $n\geq 2$, use the well-known fact that $K_X+(n+2)A+B'$ is very ample for any nef line bundle $B'$ as $A$ is very ample. Write $$L_{l,d}=[K_X+(n+2)A+B]+[(l-1)K_X+(d-n-2)A].$$ Recall that a line bundle is $1$-very ample is equivalent to it being very ample, and by \cite[Theorem 1.1]{HTT}, a tensor product of $a$ and $b$-very ample line bundles is $(a+b)$-very ample when $a,b\geq 0$. To this end, note that $d-n-2\geq (l-1)(n+2)+2$ as $d\geq l(n+2)+2$ by assumption, whence $$L_{l,d}=\underbrace{[K_X+(n+2)A+B]}_{\text{very ample}}+(l-1)\underbrace{[K_X+(n+2)A]}_{\text{very ample}}+jA \textrm{ with $j\geq 2$}$$ which proves the assertion. In fact, let us also note that $L_{l,d}$ is 3-very ample if $l=2$ (resp. $l\geq 3$) and $d\geq l(n+2)+1$ (resp. $d\geq l(n+2)$).
\end{remark}

\begin{remark}\label{rmkndb}
    We note that in the situation of \setupref{setup}, $L$ satisfies $(Q1)$ and $(Q3_0)$. For $n=1$, this follows from \cite[Proof of Corollary A]{Ull16} and \cite[Proof of Theorem 1.2]{CS18}. To see this for $n\geq 2$, use the well-known fact that $K_X+(n+1)A+B$ is nef as $A$ and $B$ are very ample and nef line bundles respectively. write $$L_{l,d}=K_X+(2n+2)A+[(l-1)K_X+(d-2n-2)A+B].$$ Using \cite[Proof of Corollary C]{Ull16} and \cite[Proof of Theorem 1.2]{CS18}, it is enough to show that $d-2n-2\geq (l-1)(n+1)$ which holds as $d\geq (l+1)(n+1)$ by assumption. In other words, the above discussion verifies that \setupref{setup} satisfies \cite[Assumption 1.1]{CS18}.
\end{remark}

We aim to prove the following 

\begin{theorem}\label{pluriqp}
Let $0\leq p\leq n$ be an integer. Suppose we are in the situation of \setupref{setup} and assume $(l,d)$ satisfies \eqref{co3} with $s=p$ when $n\geq 2$. Then $L$ satisfies $(Q_p)$-property.
\end{theorem}

To prove this, first recall that a vector bundle $\mathcal{E}$ on $X$ is called {\it $k$--jet ample} if for every choice of $t$ distinct points $x_1, \cdots, x_t\in X$ and for every tuple $(k_1,\cdots ,k_t)$ of positive integers with $\sum k_i=k+1$, the evaluation map
$$H^0(\mathcal{E})\to H^0\left(\mathcal{E}\otimes \left(\mathcal{O}_X/(\mathcal{I}_{x_1}^{k_1}\otimes\cdots\otimes\mathcal{I}_{x_t}^{k_t})\right)\right)=\bigoplus_{i=1}^{t} H^0\left(\mathcal{E}\otimes \left(\mathcal{O}_X/\mathcal{I}_{x_i}^{k_i}\right)\right)$$
surjects where $\mathcal{I}_{x_i}$ is the ideal sheaf of $x_i\in X$. 

\vspace{5pt}

Recall also that when $n\geq 2$, $A$ is a very ample and $B$ is a nef line bundle on $X$, for a given $l,d\in\mathbb{N}$, we have $$L_{l,d}:=lK_X+dA+B.$$
We first prove a few results on $L_{l,d}$ when $n\geq 2$:

\begin{lemma}\label{l1}
Let $n\geq 2$, $0\leq p\leq n$ and $j\geq \max\left\{\frac{p}{2},1\right\}$. If $l\geq 0$ and $d\geq l(n+1)+2+p$ then $\Omega_X^p(jL_{l,d})$ is $(2j-p)$-jet ample. 
\end{lemma}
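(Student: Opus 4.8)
The plan is to reduce the jet ampleness of the \emph{vector} bundle $\Omega_X^p\otimes jL_{l,d}$ to the jet ampleness of a single \emph{line} bundle, by splitting off a globally generated factor from $\Omega_X^p$. First I would record two structural inputs. \emph{(i) Global generation of a twist of $\Omega_X^p$:} since $A$ is very ample it embeds $X\hookrightarrow\mathbb{P}:=\mathbb{P}(H^0(A)^{\vee})$ with $\mathcal{O}_{\mathbb{P}}(1)|_X\cong A$, and $\Omega_{\mathbb{P}}^1(2)$ is globally generated; restricting and composing with the surjection $\Omega_{\mathbb{P}}^1|_X\twoheadrightarrow\Omega_X^1$ shows $\Omega_X^1\otimes A^{\otimes 2}$ is globally generated, hence so is $\bigwedge^p(\Omega_X^1\otimes A^{\otimes 2})\cong\Omega_X^p\otimes A^{\otimes 2p}$. \emph{(ii) A ``globally generated $\otimes$ jet ample'' principle:} if $\mathcal{F}$ is a globally generated vector bundle and $M$ is a $k$-jet ample line bundle, then $\mathcal{F}\otimes M$ is $k$-jet ample. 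I would prove this directly: for distinct $x_1,\dots,x_t$ and positive integers $k_i$ with $\sum k_i=k+1$, set $Z=\coprod\operatorname{Spec}(\mathcal{O}_X/\mathcal{I}_{x_i}^{k_i})$; a finite surjection $\mathcal{O}_X^{\oplus N}\twoheadrightarrow\mathcal{F}$ yields $M^{\oplus N}\twoheadrightarrow\mathcal{F}\otimes M$, and since $Z$ is $0$-dimensional the functor $H^0(Z,-)$ is exact, so in the commutative square relating the global sections of $M^{\oplus N}$, of $\mathcal{F}\otimes M$, and their restrictions to $Z$, the surjectivity of $H^0(X,M)\to H^0(Z,M|_Z)$ (which is exactly the $k$-jet ampleness of $M$) forces surjectivity of $H^0(X,\mathcal{F}\otimes M)\to H^0(Z,(\mathcal{F}\otimes M)|_Z)$.

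With these in hand the problem becomes purely numerical. Writing
\[
\Omega_X^p\otimes jL_{l,d}\cong(\Omega_X^p\otimes A^{\otimes 2p})\otimes N',\qquad N':=jlK_X+(jd-2p)A+jB,
\]
it suffices by (i)--(ii) to show that the line bundle $N'$ is $(2j-p)$-jet ample. Here I would invoke the standard facts that $A^{\otimes m}$ is $m$-jet ample (as $A$ is $1$-jet ample and jet ampleness is additive under tensor products), that $K_X+(n+1)A$ and more generally $K_X+(n+1)A+(\text{nef})$ are base-point-free, and that jet ampleness is monotone (a $k$-jet ample bundle is $k'$-jet ample for all $0\le k'\le k$). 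Absorbing the nef bundle $jB$ into a single base-point-free adjoint factor, I decompose
\[
N'=\bigl[K_X+(n+1)A+jB\bigr]\otimes\bigl[K_X+(n+1)A\bigr]^{\otimes(jl-1)}\otimes A^{\otimes e},\qquad e=jd-2p-jl(n+1).
\]
The first two factors are globally generated, i.e. $0$-jet ample, so by additivity $N'$ is $e$-jet ample.

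It then remains to verify the arithmetic forced by the hypotheses $d\ge l(n+1)+2+p$ and $j\ge\max\{p/2,1\}$. One computes $e=j\bigl(d-l(n+1)\bigr)-2p\ge j(2+p)-2p=2j+p(j-2)$, from which both $e\ge 2j-p$ (equivalent to $p(j-1)\ge 0$) and $e\ge 0$ follow (the latter being immediate for $j\ge 2$, while for $j=1$ the constraint $j\ge p/2$ forces $p\le 2$ and hence $e\ge 2-p\ge 0$). Since $N'$ is $e$-jet ample with $e\ge 2j-p\ge 0$, monotonicity yields that $N'$ is $(2j-p)$-jet ample, and then (i)--(ii) give the claim. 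This argument needs $jl-1\ge 0$, i.e. $l\ge 1$; this is precisely the range relevant to \setupref{setup}, since \eqref{co3} always forces $l\ge 1$ (its $i=-1$ term already gives $l\ge 1$). The boundary case $l=0$, where there is no adjoint factor available to absorb the nef bundle $B$, does not occur in our applications.

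The step I expect to be the main obstacle is exactly this final bookkeeping. The bound is tight in the edge cases ($j=1$, or $p$ close to $n$), so one must be careful to certify the precise order $(2j-p)$ rather than settle for something weaker, and to check $e\ge 0$ so that the exponent of $A$ is admissible. The genuinely delicate point is that the merely nef bundle $B$ must be routed through the base-point-free adjoint bundle $K_X+(n+1)A+jB$ and cannot be treated as an independent positive factor: tensoring a jet ample (indeed even a globally generated) line bundle with a bare nef bundle need not preserve jet ampleness, so it is the presence of at least one copy of $K_X$ — guaranteed by $l\ge 1$ — that makes the decomposition legitimate.
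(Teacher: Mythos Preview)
Your argument is essentially identical to the paper's: the same splitting $\Omega_X^p(jL_{l,d})=\Omega_X^p(2pA)\otimes N'$, the same global generation of $\Omega_X^p(2pA)$ via the embedding by $A$, the same decomposition of $N'$ into copies of $K_X+(n+1)A$ (one absorbing $jB$) and a residual power of $A$, and the same numerical check $jd-2p-jl(n+1)\ge 2j-p$, all assembled via additivity of jet ampleness (\cite[Proposition~2.3]{BDS99}). Your explicit proof of the ``globally generated $\otimes$ $k$-jet ample $\Rightarrow$ $k$-jet ample'' step is a nice touch that the paper leaves implicit.

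You are in fact more careful than the paper on one point: you flag that the decomposition of $N'$ requires $jl\ge 1$, hence $l\ge 1$, in order to route the merely nef bundle $jB$ through an adjoint factor $K_X+(n+1)A+jB$. The paper's proof silently uses the same maneuver and is equally incomplete at $l=0$; as you note, \eqref{co3} forces $l\ge 1$ in every application, so nothing is lost. Your handling of this is correct and your observation that a bare nef twist need not preserve jet ampleness is the right diagnosis.
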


\begin{proof} We write $\Omega_X^p(jL_{l,d})=\Omega_X^p\left(2pA\right)\left(jlK_X+(jd-2p)A+jB\right)$. It is well-known that $\Omega_X(2A)$ is globally generated (i.e. 0-jet ample), whence so is $\Omega_X^p(2pA)$. Elementary consideration of Castelnuovo-Mumford regularity and Kodaira vanishing theorem guarantees that $K_X+(n+1)A$ and $K_X+(n+1)A+jB$ are also globally generated. Moreover, since $A$ is a very ample line bundle, it is $1$-jet ample (see \cite[page 3]{BDS99}). Observe that we have $$jd-2p-jl(n+1)\geq 2j-p$$ by our assumption on $d$. Consequently we can write $$jlK_X+(jd-2p)A+jB=[(jl-1)(K_X+(n+1)A)]+[K_X+(n+1)A+jB]+[(2j-p)A]+rA$$ with $r\geq 0$. The conclusion follows from \cite[Proposition 2.3]{BDS99}.\end{proof} 

\begin{lemma}\label{l2}
Let $n\geq 2$, $0\leq p\leq n$. Further assume 
\begin{center}
$l\geq\binom{n-1}{p-1}+1$,\\[5pt]
$d\geq \max\left\{l(n+2), (n+2)\left(l-\binom{n-1}{p-1}-1\right)+2p\left(\binom{n}{p}+1\right), 2p\left(\binom{n}{p}+1\right)+1\right\}.$ 
\end{center}
Then $H^i(\Omega_X^p(jL_{l,d}))=0$ for all $i, j\geq 1$.
\end{lemma}

\begin{proof} It is easy to see using a splitting principle that $\textrm{det}(\Omega_X^p)=\binom{n-1}{p-1}K_X$. Consequently, we obtain $$\Omega_X^p(jL_{l,d})=K_X\otimes\Omega_X^p(2pA)\otimes \textrm{det}(\Omega_X^p(2pA))\otimes Q$$
where $$Q=\underbrace{\left(jl-\binom{n-1}{p-1}-1\right)}_{\text{Term 1}}K_X+\underbrace{\left(jd-2p-\binom{n}{p}2p\right)}_{\text{Term 2}}A+jB.$$
Notice that 
\begin{align*}
jd-2p-\binom{n}{p}2p=& (j-1)d+\left(d-2p-\binom{n}{p}2p\right)\\
\geq & (j-1)l(n+2)+(n+2)\left(l-\binom{n-1}{p-1}-1\right)\\
=& (n+2)\left(jl-\binom{n-1}{p-1}-1\right)
\end{align*}
by assumption. Since $K_X+(n+2)A$ is very ample, we see that $Q$ is ample when $\textrm{Term 1}>0$. Now, $\textrm{Term 1}=0$ is possible only if $j=1$ by our assumption, in which case $\textrm{Term 2}>0$. Thus, under our assumption, $Q$ is ample. Since $\Omega_X^p(2pA)$ is nef, the assertion follows from Griffiths' vanishing theorem (\cite[Variant 7.3.2]{Laz'}).\end{proof}

\begin{proposition}\label{vanishing}
Let $n\geq 2$, $0\leq p\leq n$ and assume $(l,d)$ satisfies \eqref{co3} with $s=p$. 
Then $$H^i(\Omega_{F_x}^{p'}\otimes b_x^*(jL_{l,d})(-2jE_x))=0$$ for all $i,j\geq 1$, $0\leq p'\leq p$ and for all $x\in X$.
\end{proposition}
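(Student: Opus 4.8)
The plan is to compute the cohomology on $F_x$ by pushing forward along $b_x$ and reducing everything to the two vanishing inputs already available: \lemmaref{l2} on $X$ and Bott vanishing on the exceptional $E_x\cong\mathbb{P}^{n-1}$, with \lemmaref{l1} controlling the single boundary term. Throughout, since $0\le p'\le p$ and $(l,d)$ satisfies \eqref{co3} with $s=p$, the numerical hypotheses of both \lemmaref{l1} and \lemmaref{l2} hold for $\Omega_X^{p'}(jL_{l,d})$ for every $j\ge 1$ (taking $i=p'-1$ in the bounds of \eqref{co3}); I abbreviate $M:=b_x^*(jL_{l,d})(-2jE_x)$.

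First I would establish the higher direct image vanishing
$$R^ab_{x*}\bigl(\Omega_{F_x}^{p'}(-2jE_x)\bigr)=0\quad\text{for all }a\ge 1.$$
By the formal function theorem this reduces to the vanishing of $H^a$ of the graded pieces of the $E_x$-adic filtration, namely of $\Omega_{F_x}^{p'}|_{E_x}\otimes\mathcal{O}_{E_x}(2j+m)$ for $m\ge 0$, where I use $\mathcal{O}_{F_x}(E_x)|_{E_x}\cong\mathcal{O}_{E_x}(-1)$. The conormal sequence $0\to\mathcal{O}_{E_x}(1)\to\Omega_{F_x}^1|_{E_x}\to\Omega_{E_x}^1\to 0$ induces a filtration on $\Omega_{F_x}^{p'}|_{E_x}$ with graded pieces $\Omega_{E_x}^{p'}$ and $\Omega_{E_x}^{p'-1}(1)$; twisting by $\mathcal{O}_{E_x}(2j+m)$ with $2j+m\ge 2$ and applying Bott vanishing on $E_x\cong\mathbb{P}^{n-1}$ gives the claim. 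By the projection formula this yields
$$H^i\bigl(F_x,\Omega_{F_x}^{p'}\otimes M\bigr)\cong H^i\bigl(X,\mathcal{G}\otimes jL_{l,d}\bigr),\qquad \mathcal{G}:=b_{x*}\bigl(\Omega_{F_x}^{p'}(-2jE_x)\bigr)\subseteq b_{x*}\Omega_{F_x}^{p'}=\Omega_X^{p'}.$$

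Next I would identify the cokernel $\Omega_X^{p'}/\mathcal{G}$, which is supported at $x$. The key local fact is that the natural inclusion $b_x^*\Omega_X^{p'}\hookrightarrow\Omega_{F_x}^{p'}$ has image inside $\Omega_{F_x}^{p'}(-(p'-1)E_x)$: in the blow-up chart $x_1=y_1,\ x_i=y_1t_i$ one checks that each $b_x^*(dx_{i_1}\wedge\cdots\wedge dx_{i_{p'}})$ is divisible by exactly $y_1^{p'-1}$. Consequently a form with coefficients in $\mathcal{I}_x^{c}$ pulls back to vanish to order $\ge c+(p'-1)$ along $E_x$, so that $\mathcal{I}_x^{\max\{2j-p'+1,0\}}\Omega_X^{p'}\subseteq\mathcal{G}$. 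In particular, when $2j\le p'-1$ one gets $\mathcal{G}=\Omega_X^{p'}$ and the desired vanishing is immediate from \lemmaref{l2}; when $2j\ge p'$, the quotient $\Omega_X^{p'}/\mathcal{G}$ is a quotient of $\Omega_X^{p'}\otimes(\mathcal{O}_X/\mathcal{I}_x^{2j-p'+1})$.

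Finally, from the short exact sequence $0\to\mathcal{G}\otimes jL_{l,d}\to\Omega_X^{p'}\otimes jL_{l,d}\to(\Omega_X^{p'}/\mathcal{G})\otimes jL_{l,d}\to 0$ together with $H^i(X,\Omega_X^{p'}\otimes jL_{l,d})=0$ for $i\ge 1$ (\lemmaref{l2}) and the fact that the quotient is a skyscraper, I would deduce $H^i(X,\mathcal{G}\otimes jL_{l,d})=0$ for $i\ge 2$, while $H^1(X,\mathcal{G}\otimes jL_{l,d})$ is the cokernel of the restriction $H^0(\Omega_X^{p'}(jL_{l,d}))\to H^0\bigl((\Omega_X^{p'}/\mathcal{G})\otimes jL_{l,d}\bigr)$. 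In the range $2j\ge p'$ this cokernel vanishes because $\Omega_X^{p'}(jL_{l,d})$ is $(2j-p')$-jet ample by \lemmaref{l1} (applicable since $j\ge p'/2$ and $d\ge l(n+1)+2+p'$), which forces surjectivity onto the fat point $\mathcal{O}_X/\mathcal{I}_x^{2j-p'+1}$ (tensored with $\Omega_X^{p'}$), hence onto its quotient. The hard part is precisely the bookkeeping of this last step: verifying that the form-part of the pullback absorbs exactly $p'-1$ orders of vanishing along $E_x$, so that the $(2j-p')$-jet ampleness delivered by \lemmaref{l1} is tailored to the $2j$-fold twist and the two ranges $2j\lessgtr p'$ close up with no gap.
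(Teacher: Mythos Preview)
Your proof is correct and uses the same three inputs as the paper (\lemmaref{l1}, \lemmaref{l2}, and Bott vanishing on $E_x\cong\mathbb{P}^{n-1}$), together with the same case split $2j\gtrless p'$. The organization differs slightly: the paper works on $F_x$ throughout via the exact sequence
\[
0\to b_x^*(\Omega_X^{p'}(jL_{l,d}))(-(2j-p'+1)E_x)\to\Omega_{F_x}^{p'}\otimes b_x^*(jL_{l,d})(-2jE_x)\to\Omega_{E_x}^{p'}\otimes b_x^*(jL_{l,d})(-2jE_x)\to 0
\]
coming from $b_x^*\Omega_X^1\cong\Omega_{F_x}^1(\log E_x)(-E_x)$, killing the right-hand term by Bott (after first checking that $b_x^*(jL_{l,d})(-2jE_x)$ is ample) and handling the left-hand term by pushing forward along $b_x$. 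You instead push forward along $b_x$ at the outset, folding the $E_x$ contribution into the vanishing of $R^ab_{x*}(\Omega_{F_x}^{p'}(-2jE_x))$ for $a\ge1$, and then analyze the pushforward sheaf $\mathcal{G}$ directly via the inclusion $\mathcal{I}_x^{\max\{2j-p'+1,0\}}\Omega_X^{p'}\subseteq\mathcal{G}$. The paper's route has the advantage that the log exact sequence isolates the $\mathcal{I}_x^{2j-p'+1}$ twist on the nose, without having to identify $\mathcal{G}$ or argue that $\Omega_X^{p'}/\mathcal{G}$ is a quotient of the fat point; your route avoids the separate ampleness verification needed for the Bott step. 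Both reduce identically to \lemmaref{l1} and \lemmaref{l2}.
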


\begin{proof} Recall from \remarkref{rmkndb} that we can write $L_{l,d}=K_X+(2n+2)A+B'$ where $B'=(l-1)K_X+(d-2n-2)A+B$ is nef. Also recall from \cite[Proof of Proposition 3.2]{CS18} that $$b_x^*(jL_{l,d})(-2jE_x)\cong K_{F_x}+(n+1)P+Q$$ where $P=b_x^*(2A)(-E_x)$ and $Q=(j-1)(K_{F_x}+(n+1)P+b_x^*B')+b_x^*B'$. It is well-known that $K_{F_x}+(n+1)P$ is very ample (see for e.g. \cite[Proof of Proposition 3.2]{CS18} where \cite[page 57]{EL} is used). Since $b_x^*A$ is nef and big, we conclude that 
\begin{equation}\label{ample}
    \textrm{$b_x^*(jL_{l,d})(-2jE_x)$ and $b_x^*(jL_{l,d})(-2jE_x)-K_{F_x}$ are both ample $\forall j\geq 1$, $\forall x\in X$. }
\end{equation}
Thus, the conclusion follows for $p'=0$ by Kodaira vanishing theorem. Henceforth we assume $p'\geq 1$ (whence $p\geq 1$). Since $b_x$ is the blow-up of $X$ at $x$, we observe that $\Omega_{F_x}^1(\log E_x)(-E_x)\cong b_x^*\Omega_X^1$
and we deduce the following exact sequence for any $x\in X$ and for any $1\leq p'\leq p\leq n$, $j\geq 1$
\begin{equation}\label{mainex}
    0\to b_x^*(\Omega_X^{p'}(jL_{l,d}))(-(2j-p'+1)E_x)\to\Omega_{F_x}^{p'}\otimes b_x^*(jL_{l,d})(-2jE_x)\to\Omega_{E_x}^{p'}\otimes b_x^*(jL_{l,d})(-2jE_x)\to 0.
\end{equation}
Notice that \eqref{ample} also yields 
\begin{equation}\label{bott}
    \textrm{$H^i(\Omega_{E_x}^{p'}(b_x^*(jL_{l,d})(-2jE_x)))=0$ for all $i,j\geq 1$ and for all $x\in X$}
\end{equation}
by Bott vanishing theorem, which is known to hold on $E_x$ since it is a projective space. Now assume $2j-p'\geq 0$, and we use the exact sequence 
\begin{equation}\label{jetex}
    0\to \Omega_X^{p'}(jL_{l,d})\otimes\mathcal{I}_x^{2j-p'+1}\to \Omega_X^{p'}(jL_{l,d})\to \Omega_X^{p'}(jL_{l,d})\otimes(\mathcal{O}_X/\mathcal{I}_x^{2j-p'+1})\to 0.
\end{equation}
Using \lemmaref{l1}, we observe that the map $H^0(\Omega_X^{p'}(jL_{l,d}))\to H^0(\Omega_X^{p'}(jL_{l,d})\otimes(\mathcal{O}_X/\mathcal{I}_x^{2j-p'+1}))$ surjects. Passing to the cohomology of \eqref{jetex}, and using \lemmaref{l2}, we get that 
\begin{equation}\label{2j-p}
    \textrm{$H^i(\Omega_X^{p'}(jL_{l,d})\otimes\mathcal{I}_x^{2j-p'+1})=0$ for $i\geq 1$ and for all $x\in X$ if $2j-p'\geq 0$.}
\end{equation}  
It is well-known (see for example \cite[Proof of Lemma 1.4]{BEL}) that for $0\leq s\leq n-1$
\[
R^i{b_x}_*\mathcal{O}_{F_x}(sE_x)= \begin{cases} 
      \mathcal{O}_{F_x} & \textrm{if $i=0$,} \\
      0 & \textrm{if $i>0$.}
      \end{cases}
\]
whence $H^i(b_x^*(\Omega_X^{p'}(jL_{l,d}))(-(2j-p'+1)E_x))=0$ by \lemmaref{l2} for $i\geq 1$ if $2j-p'< 0$. Thus, using \eqref{2j-p} and Leray spectral sequence
we get
\begin{equation}\label{2j-p-1}
    \textrm{$H^i(b_x^*(\Omega_X^{p'}(jL_{l,d}))(-(2j-p'+1)E_x))=0$ for all $i,j\geq 1$ and for all $x\in X$}.
\end{equation} 
The assertion follows from \eqref{bott}, \eqref{2j-p-1} and the cohomology sequence of \eqref{mainex}.\end{proof}

We are now ready to provide the

\begin{proof}[Proof of \theoremref{pluriqp}]
Notice that by assumption, $L$ is 3-very ample by \remarkref{rmkva}, and satisfies $(Q1)$ by \remarkref{rmkndb}.

When $n=1$, $\textrm{deg}(L)\geq 2g+3$ by assumption whence $\textrm{deg}(b_x^*(L)(-2E_x))\geq 2g+1$. Thus $L$ satisfies $(Q2)$ and $(Q3_p)$ for all $p\geq 0$.
Now assume $n\geq 2$ and $L_{l,d}$ satisfies \eqref{co3} with $s=p$. Then $L$ satisfies $(Q2)$ by \eqref{ample}, and satisfies $(Q3_p)$ by \propositionref{vanishing}. 
\end{proof}

\section{Du Bois complex of secant varieties}\label{sectiondb}

Let $X$ be a smooth projective variety of dimension $n$ and $L$ a 3-very ample line bundle. We introduce the following notation that will be used throughout the sequel: $U_X:=\Sigma\backslash X$ and we set $j_{U_X}: U_X\rightarrow\Sigma$ to be the inclusion. 

\vspace{5pt}

In this section, we discuss the Du Bois complex of the secant variety $\Sigma = \Sigma(X,L)$. In particular, we prove \theoremref{thm1} and \theoremref{thm2.3}. 

\subsection{Secant varieties with pre-\texorpdfstring{$p$}{TEXT}-Du Bois singularities.}
We first give sufficient conditions for the secant variety $\Sigma = \Sigma(X,L)$ to have pre-$p$-Du Bois singularities. Recall that this is a condition on the complex $\underline{\Omega}^p_{\Sigma}$. The first result that we are after is a local vanishing statement:

\begin{theorem}\label{lv1}
Let $p\in\mathbb{N}$. If $L$ is $3$-very ample and satisfies $(Q3_p)$, then $$R^it_*\Omega^k_{\mathbb{P}(\mathcal{E}_L)}(\log\Phi)(-\Phi)=0\textrm{ for all } i\geq 1\textrm{ and }0\leq k\leq p.$$
\end{theorem}

\begin{remark}This condition is sufficient for $\cohH^j(\underline{\Omega}^p_{\Sigma}) = 0$ for $j\neq 0$, as is explained in the proof of \theoremref{thm1} (see \eqref{ste}).
\end{remark}

We need some preparations to prove the above result.

\begin{proposition}\label{dbvg}
Let $i,p\geq 1$ be integers and let $L$ be a $3$-very ample line bundle. Suppose that for all $x\in X$, $j\geq 1$, and $0\leq q\leq p$,
 \begin{equation}\label{prophyp}H^i(\Omega^q_{F_x}\otimes b_x^*(jL)(-2jE_x))=0.\end{equation}
 Then $R^it_*\Omega^p_{\mathbb{P}(\mathcal{E}_L)}(\log\Phi)(-\Phi)=0$.
\end{proposition}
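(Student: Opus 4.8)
The plan is to prove the vanishing $R^it_*\mathcal{F}=0$ for $\mathcal{F}:=\Omega^p_{\mathbb{P}(\mathcal{E}_L)}(\log\Phi)(-\Phi)$, a locally free sheaf on $\mathbb{P}(\mathcal{E}_L)$, by reducing to the fibres of $t$ and applying the hypothesis \eqref{prophyp} term by term. Since $t$ is an isomorphism over $\Sigma\setminus X$ by \propositionref{stronglog}, the coherent sheaf $R^it_*\mathcal{F}$ is supported on $X$ for every $i\ge 1$, so to show it vanishes it suffices to show that its completion at each closed point $x\in X$ is zero. By the theorem on formal functions this completion is $\varprojlim_k H^i\bigl(\mathcal{F}\otimes\mathcal{O}_{\mathbb{P}(\mathcal{E}_L)}/\mathfrak{m}_x^{k+1}\mathcal{O}_{\mathbb{P}(\mathcal{E}_L)}\bigr)$. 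The scheme-theoretic fibre $t^{-1}(x)$ is set-theoretically $F_x$, so $\mathfrak{m}_x\mathcal{O}_{\mathbb{P}(\mathcal{E}_L)}$ and the reduced ideal $\mathcal{I}_{F_x}$ have the same radical; their powers are therefore cofinal, and it is enough to prove
\[ H^i\bigl(\mathcal{F}\otimes\mathcal{O}_{\mathbb{P}(\mathcal{E}_L)}/\mathcal{I}_{F_x}^{k+1}\bigr)=0 \quad\text{for all }k\ge 0. \]

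I would establish this by induction on $k$. As $\mathcal{F}$ is locally free, tensoring the short exact sequences $0\to\mathcal{I}_{F_x}^{k}/\mathcal{I}_{F_x}^{k+1}\to\mathcal{O}/\mathcal{I}_{F_x}^{k+1}\to\mathcal{O}/\mathcal{I}_{F_x}^{k}\to 0$ by $\mathcal{F}$ keeps them exact, so the induction reduces the problem to the vanishing of $H^i\bigl(\mathcal{F}\otimes\mathcal{I}_{F_x}^{k}/\mathcal{I}_{F_x}^{k+1}\bigr)$ for every $k\ge 0$. Since $\mathcal{I}_{F_x}^{k}/\mathcal{I}_{F_x}^{k+1}$ is a sheaf on $F_x$, one has $\mathcal{F}\otimes\mathcal{I}_{F_x}^{k}/\mathcal{I}_{F_x}^{k+1}=\mathcal{F}|_{F_x}\otimes\mathcal{I}_{F_x}^{k}/\mathcal{I}_{F_x}^{k+1}$, and by \eqref{sym} the second factor is a direct sum of line bundles $b_x^*(mL)(-2mE_x)$ with $0\le m\le k$.

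It remains to understand $\mathcal{F}|_{F_x}$. From \eqref{0} we have $\mathcal{O}_{\mathbb{P}(\mathcal{E}_L)}(-\Phi)|_{F_x}\cong b_x^*L(-2E_x)$, so restricting \eqref{ex1} and twisting by this line bundle exhibits $\mathcal{F}|_{F_x}$ as an extension whose graded pieces are $\Omega^r_{\Phi}|_{F_x}\otimes b_x^*L(-2E_x)$ for $r\in\{p-1,p\}$. In turn, the conormal sequence of $F_x\subset\Phi$, whose conormal bundle is $\mathcal{O}_{F_x}^{\oplus n}$ by \eqref{0}, filters $\Omega^r_{\Phi}|_{F_x}=\bigwedge^r\bigl(\Omega^1_{\Phi}|_{F_x}\bigr)$ with associated graded $\bigoplus_{a=0}^{r}(\Omega^{r-a}_{F_x})^{\oplus\binom{n}{a}}$. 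Combining these three filtrations, the associated graded of $\mathcal{F}\otimes\mathcal{I}_{F_x}^{k}/\mathcal{I}_{F_x}^{k+1}$ is a direct sum of sheaves of the form $\Omega^q_{F_x}\otimes b_x^*(jL)(-2jE_x)$ with $0\le q\le p$ and $j=m+1\in\{1,\dots,k+1\}$. I want to stress that the twist by $-\Phi$ is exactly what shifts $m$ to $m+1$ and thereby forces $j\ge 1$; without it one would meet untwisted summands $\Omega^q_{F_x}$, whose higher cohomology does not vanish in general.

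The hypothesis \eqref{prophyp} says precisely that $H^i$ of each of these summands vanishes, hence $H^i\bigl(\mathcal{F}\otimes\mathcal{I}_{F_x}^{k}/\mathcal{I}_{F_x}^{k+1}\bigr)=0$; feeding this into the induction yields $H^i\bigl(\mathcal{F}\otimes\mathcal{O}/\mathcal{I}_{F_x}^{k+1}\bigr)=0$ for all $k$, so the inverse limit is zero and $R^it_*\mathcal{F}=0$. The formal-functions reduction and the dévissage are routine; the one place demanding genuine care is the bookkeeping in the previous paragraph—assembling the log-residue sequence \eqref{ex1}, the conormal filtration of $\Omega^{\bullet}_{\Phi}|_{F_x}$, and the symmetric-power decomposition \eqref{sym} simultaneously, while checking that \emph{every} resulting twist is of the form $b_x^*(jL)(-2jE_x)$ with $j\ge 1$ and \emph{every} differential degree $q$ lies in $[0,p]$, so that \eqref{prophyp} applies to each graded piece.
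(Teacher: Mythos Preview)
Your proof is correct and follows essentially the same route as the paper: formal function theorem to reduce to the vanishing of $H^i(\mathcal{F}\otimes\mathcal{I}_{F_x}^k/\mathcal{I}_{F_x}^{k+1})$, then the decomposition via \eqref{sym}, the log-residue sequence \eqref{ex1}, and the conormal filtration of $\Omega^r_\Phi|_{F_x}$ coming from \eqref{0}. The only organizational difference is that the paper packages these three reductions into three separate claims applied in sequence, whereas you assemble the filtrations simultaneously; the mathematical content and the key observation that the twist by $-\Phi$ forces $j\ge 1$ are identical.
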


\begin{proof} We prove the assertion using the following claims. 

\begin{claim}
    If $H^i(\Omega^p_{\mathbb{P}(\mathcal{E}_L)}(\log\Phi)|_{F_x}\otimes b_x^*(mL)(-2mE_x))=0$ for all $x\in X$ and $m\geq 1$, then $R^{i}t_*\Omega^p_{\mathbb{P}(\mathcal{E}_L)}(\log\Phi)(-\Phi)=0$.
\end{claim}
\begin{proof}
Using the formal function theorem for $x\in \Sigma$, we obtain the isomorphism
\begin{equation*}
    \left(R^it_*\Omega^p_{\mathbb{P}(\mathcal{E}_L)}(\log\Phi)(-\Phi)\right)_x^{\widehat{}}\cong \varprojlim H^i(\Omega^p_{\mathbb{P}(\mathcal{E}_L)}(\log\Phi)(-\Phi)\otimes(\mathcal{O}_{\mathbb{P}(\mathcal{E}_L)}/\mathcal{I}_{F_x}^j)).
\end{equation*}
Since $(R^it_*\Omega^p_{\mathbb{P}(\mathcal{E}_L)}(\log\Phi)(-\Phi))_y=0$ for $y\in U_X$, it is enough to check that for $x\in X$, 
\begin{equation}
    H^i(\Omega^p_{\mathbb{P}(\mathcal{E}_L)}(\log\Phi)(-\Phi)\otimes(\mathcal{O}_{\mathbb{P}(\mathcal{E}_L)}/\mathcal{I}_{F_x}^j))=0\textrm{ for $j\geq 1$.}
\end{equation}
Passing to the cohomology of the following exact sequence 
\begin{equation*}
\begin{split}
    0\to \Omega^p_{\mathbb{P}(\mathcal{E}_L)}(\log\Phi)(-\Phi)\otimes(\mathcal{I}_{F_x}^j/\mathcal{I}_{F_x}^{j+1})\to \Omega^p_{\mathbb{P}(\mathcal{E}_L)}(\log\Phi)(-\Phi)\otimes(\mathcal{O}_{\mathbb{P}(\mathcal{E}_L)}/\mathcal{I}_{F_x}^{j+1})\\
    \to \Omega^p_{\mathbb{P}(\mathcal{E}_L)}(\log\Phi)(-\Phi)\otimes(\mathcal{O}_{\mathbb{P}(\mathcal{E}_L)}/\mathcal{I}_{F_x}^j)\to 0,
\end{split}
\end{equation*} we conclude that it is enough to verify that
\begin{equation*}
    H^i(\Omega^p_{\mathbb{P}(\mathcal{E}_L)}(\log\Phi)(-\Phi)\otimes(\mathcal{I}_{F_x}^j/\mathcal{I}_{F_x}^{j+1}))=0\textrm{ for all $j\geq 0$.}
\end{equation*}
Using \eqref{sym} and \eqref{0}, we conclude that 
\begin{equation*}
\begin{array}{c}
H^i(\Omega^p_{\mathbb{P}(\mathcal{E}_L)}(\log\Phi)(-\Phi)\otimes(\mathcal{I}_{F_x}^j/\mathcal{I}_{F_x}^{j+1}))\\
    \cong \bigoplus\limits_{q=0}^j\left[H^i(\Omega^p_{\mathbb{P}(\mathcal{E}_L)}(\log\Phi)|_{F_x}\otimes b_x^*((q+1)L)(-(2q+2)E_x))\right]^{\oplus \binom{n+j-q-1}{n-1}}
\end{array}
\end{equation*}
and the conclusion follows.
\end{proof}
\begin{claim}\label{cl2}
    Let $m\geq 0$. If $H^i(\Omega^q_{\Phi}|_{F_x}\otimes b_x^*(mL)(-2mE_x))=0$ for $q=p-1,p$ and for all $x\in X$, then $$H^i(\Omega^p_{\mathbb{P}(\mathcal{E}_L)}(\log\Phi)|_{F_x}\otimes b_x^*(mL)(-2mE_x))=0.$$
\end{claim}
\begin{proof}
    Follows by twisting (\ref{ex1}) by $b_x^*(mL)(-2mE_x)$, and passing to cohomology.
\end{proof}

\begin{claim}\label{cl3}
    Assume $m,q\geq 0$. If 
    \begin{equation}\label{modified}
        H^i(\Omega_{F_x}^{q'}\otimes b_x^*(mL)(-2mE_x))=0\,\textrm{ for all }\, 0\leq q'\leq q\,\textrm{ and for all }\, x\in X,
    \end{equation}
    then $H^i(\Omega^q_{\Phi}|_{F_x}\otimes b_x^*(mL)(-2mE_x))=0$ for all $x\in X$.
\end{claim}
\begin{proof}
To show this, we use the following short exact sequence 
\begin{equation}\label{cbs}
    0\to\mathcal{N}_{F_x/\Phi}^*\to \Omega^1_{\Phi}|_{F_x}\to \Omega^1_{F_x}\to 0.
\end{equation}
Since all these sheaves are locally free, there exists a filtration 
\begin{equation}\label{filt}
    \Omega^q_{\Phi}|_{F_x} = F^0 \supseteq F^1 \supseteq \cdots \supseteq F^q \supseteq F^{q+1} = 0\,\textrm{ with $F^l/F^{l+1} \cong \bigwedge^l{\mathcal{N}_{F_x/\Phi}^*} \otimes \Omega^{q-l}_{F_x}.$}
\end{equation} 
We prove by induction that $$H^i(F^l\otimes b_x^*(mL)(-2mE_x))=0\, \textrm{ for $l=0,\ldots , q$.}$$ 
For the base case, we use the short exact sequence $$0 \to F^{q+1} = 0 \to F^q \to \bigwedge^q{\mathcal{N}_{F_x/\Phi}^*}\to 0.$$ We twist the sequence by $b_x^*(mL)(-2mE_x)$ and take cohomology, and then the vanishing follows from the hypothesis (\ref{modified}) (corresponding to $q'=0$) by (\ref{0}) which we recall says $\mathcal{N}_{F_x/\Phi}^*\cong\mathcal{O}_{F_x}^{\oplus n}$. Assume next that we know the result for $l$. We use the short exact sequence 
$$0\to F^l \to F^{l-1} \to \bigwedge^{l-1}{\mathcal{N}_{F_x/\Phi}^*} \otimes \Omega^{q-l+1}_{F_x} \to 0.$$ Twisting by $b_x^*(mL)(-2mE_x)$ and taking cohomology, the result follows from the induction hypothesis, and (\ref{modified}) by (\ref{0}).
\end{proof} 
The assertion of the proposition follows by combining the above three claims.
\end{proof}

\begin{proof}[Proof of \theoremref{lv1}] Note that the statement follows for $p=0$ by \cite[Proof of Proposition 3.2]{CS18}. Thus we assume $p\geq 1$, in which case the conclusion is a consequence of \propositionref{dbvg}. 
\end{proof}

Now we are ready to provide the 

\begin{proof}[Proof of \theoremref{thm1}.] By \cite[Proposition 3.3]{Ste} (see also \cite[\S2.1]{MPOW}), we have an exact triangle:
\begin{equation}\label{ste}
    {\bf R}t_*\Omega^k_{\mathbb{P}(\mathcal{E}_L)}(\log\Phi)(-\Phi)\to \underline{\Omega}^k_{\Sigma}\to \underline{\Omega}^k_{X}\xrightarrow{+1}
\end{equation}
Since $\mathcal{H}^i(\underline{\Omega}^k_X)=0$ for $i\neq 0$ as $X$ is smooth, passing to the cohomology of the above, we obtain $\mathcal{H}^i(\underline{\Omega}^k_{\Sigma})=0$ for $i\neq 0$ as $R^it_*\Omega^k_{\mathbb{P}(\mathcal{E}_L)}(\log\Phi)(-\Phi)=0$ for all $i\geq 1$ and $0\leq k\leq p$ by \theoremref{lv1}.  
\end{proof}

\subsection{Reflexivity condition on \texorpdfstring{$\cohH^0(\underline{\Omega}^p_{\Sigma})$}{TEXT}}
We now aim to describe the sheaf $\cohH^0(\underline{\Omega}^p_{\Sigma})$. Recall that for a seminormal variety with pre-$p$-Du Bois singularities, satisfying the codimension condition on the singular locus, the condition missing for it to have $p$-Du Bois singularities is that on degree zero, the associated graded complex of the Du Bois complex is reflexive. For this, we discuss a reflexivity condition of the push-forward of the sheaf of $p$-forms discussed in \cite{KS}. 

\vspace{5pt}

We work with the standing assumption that $L$ is 3-very ample. We will use, often without stating, that if $L$ satisfies $(Q1)$ the $\Sigma$ is normal (see \cite[Theorem D]{Ull16}), and if $\Sigma$ is normal and $L$ satisfies $(Q3_0)$ then $\Sigma$ has Du Bois singularities (see \cite[Proof of Theorem 1.2 or more precisely Theorem 3.4]{CS18}).

\begin{remark}
We have the isomorphism ${j_{U_X}}_*j_{U_X}^*\Omega_{\Sigma}^{[p]}\cong {j_{U_X}}_*\Omega_{U_X}^p$ for all $p\geq 0$ (recall that by definition $\Omega_{\Sigma}^{[p]}:=(\Omega_{\Sigma}^p)^{**}$). Thus, when $\Sigma$ is normal, we have the isomorphism 
\begin{equation}\label{jux}
    {j_{U_X}}_*\Omega_{U_X}^{p}\cong \Omega_{\Sigma}^{[p]}.
\end{equation}
On the other hand, we have the natural inclusion $$\phi_p:t_*\Omega^p_{\mathbb{P}(\mathcal{E}_L)}\hookrightarrow {j_{U_X}}_*\Omega_{U_X}^{p}.$$ 
Thus, when $\Sigma$ is normal, composing $\phi_p$ with the isomorphism \eqref{jux}, we obtain the maps $$\varphi_p:t_*\Omega_{\mathbb{P}(\mathcal{E}_L)}^p\to \Omega_{\Sigma}^{[p]}.$$
\end{remark}

\begin{proposition}\label{pn1}
The natural inclusion $\phi_p:t_*\Omega^p_{\mathbb{P}(\mathcal{E}_L)}\hookrightarrow{j_{U_X}}_*\Omega_{U_X}^{p}$ is an isomorphism for $0\leq p\leq n-1$.
In particular, if $\Sigma$ is normal,  then $\varphi_p:t_*\Omega_{\mathbb{P}(\mathcal{E}_L)}^p\to \Omega_{\Sigma}^{[p]}$ is an isomorphism for $0\leq p\leq n-1$.
\end{proposition}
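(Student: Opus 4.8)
The plan is to recognize $\phi_p$ as the comparison map appearing in the extension criterion of Kebekus--Schnell \cite[Section~6]{KS} and to verify that criterion through Grothendieck duality. Injectivity of $\phi_p$ is automatic, so only surjectivity is at issue, and its failure is measured by a cokernel supported on $X=\Sigma_{\textrm{sing}}$. Set $K:={\bf R}t_*\Omega^p_{\mathbb{P}(\mathcal{E}_L)}$, which has no cohomology in negative degrees and satisfies $\mathcal{H}^0(K)=t_*\Omega^p_{\mathbb{P}(\mathcal{E}_L)}$. Since $t$ restricts to an isomorphism over $U_X=\Sigma\setminus X$, we have $j_{U_X}^*\mathcal{H}^0(K)\cong\Omega^p_{U_X}$ and hence ${j_{U_X}}_*j_{U_X}^*\mathcal{H}^0(K)\cong{j_{U_X}}_*\Omega^p_{U_X}$, under which $\phi_p$ becomes exactly the natural map $\mathcal{H}^0(K)\to{j_{U_X}}_*j_{U_X}^*\mathcal{H}^0(K)$. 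Thus it suffices to check the support hypothesis $\dim\bigl(X\cap\Supp R^k\mathcal{H}om_{\mathcal{O}_{\Sigma}}(K,\omega_{\Sigma}^{\bullet})\bigr)<-(k+2)$ for all $k\in\mathbb{Z}$.

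Next I would compute the relevant duals. As $t$ is proper and $\mathbb{P}(\mathcal{E}_L)$ is smooth of dimension $2n+1=\dim\Sigma$, Grothendieck duality gives ${\bf D}_{\Sigma}(K)\cong{\bf R}t_*\,{\bf D}_{\mathbb{P}(\mathcal{E}_L)}(\Omega^p_{\mathbb{P}(\mathcal{E}_L)})\cong{\bf R}t_*\Omega^{2n+1-p}_{\mathbb{P}(\mathcal{E}_L)}$, using that $(\Omega^p)^{\vee}\otimes\omega\cong\Omega^{\dim-p}$ on a smooth variety. Unwinding the shift in the definition of ${\bf D}_{\Sigma}$ identifies $R^k\mathcal{H}om_{\mathcal{O}_{\Sigma}}(K,\omega_{\Sigma}^{\bullet})$ with $R^{k+2n+1}t_*\Omega^{2n+1-p}_{\mathbb{P}(\mathcal{E}_L)}$. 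Writing $i=k+2n+1\geq 0$, so that $-(k+2)=2n-1-i$, the criterion reduces to the single family of inequalities $\dim\bigl(X\cap\Supp R^i t_*\Omega^{2n+1-p}_{\mathbb{P}(\mathcal{E}_L)}\bigr)<2n-1-i$ for all $i\geq 0$.

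The case $i=0$ gives $t_*\Omega^{2n+1-p}_{\mathbb{P}(\mathcal{E}_L)}$, whose support meets $X$ in all of $X$, so the required inequality reads $n<2n-1$, which holds exactly when $n\geq 2$; the excluded case $n=1$ forces $p=0$, where the assertion is Hartogs extension of functions across the codimension $n+1\geq 2$ locus $X$ in the normal variety $\Sigma$ (cf.\ \propositionref{stronglog}). For $i\geq 1$ the morphism $t$ is an isomorphism over $U_X$, so $R^i t_*\Omega^{2n+1-p}_{\mathbb{P}(\mathcal{E}_L)}$ is supported on $X$, and the real content is to bound the dimension of this support. Here I would pass to the formal completion along a fiber and apply the formal function theorem, so that the stalk at $x$ is governed by the groups $H^i\bigl(F_x,\Omega^{2n+1-p}_{\mathbb{P}(\mathcal{E}_L)}|_{F_x}\otimes b_x^*(aL)(-2aE_x)\bigr)$ arising from the graded pieces in \eqref{sym}; restricting $\Omega^{2n+1-p}_{\mathbb{P}(\mathcal{E}_L)}|_{F_x}$ further via the conormal splitting \eqref{1} expresses everything in terms of $\Omega^c_{F_x}$ twisted by the positive line bundles $b_x^*(aL)(-2aE_x)$ with $a\geq 1$. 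The constraint $p\leq n-1$, equivalently $2n+1-p\geq n+2>n=\dim F_x$, is what makes the form degree large relative to the fibers, and together with $\codim_{\Sigma}(X)=n+1$ it is precisely what forces these higher direct images to be supported in dimension strictly below $2n-1-i$. This fiberwise cohomological estimate is the main obstacle, and it is the only place the hypothesis $0\leq p\leq n-1$ enters.

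Granting that $\phi_p$ is an isomorphism, the final clause is immediate: when $\Sigma$ is normal, \eqref{jux} identifies ${j_{U_X}}_*\Omega^p_{U_X}$ with the reflexive sheaf $\Omega^{[p]}_{\Sigma}$, and by construction $\varphi_p$ is $\phi_p$ composed with this isomorphism, so $\varphi_p$ is an isomorphism for $0\leq p\leq n-1$ as well.
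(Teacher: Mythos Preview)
Your overall strategy—recognize $\phi_p$ as the extension map in the Kebekus--Schnell criterion and verify the support condition on the Grothendieck dual—is exactly the paper's. But there is a genuine gap at the critical index $i=n$, and it is precisely where the paper does something you do not.

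You apply the criterion to the \emph{full} complex $K={\bf R}t_*\Omega^p_{\mathbb{P}(\mathcal{E}_L)}$, whose dual is ${\bf R}t_*\Omega^{2n+1-p}_{\mathbb{P}(\mathcal{E}_L)}$. At $i=n$ you must control $R^n t_*\Omega^{2n+1-p}_{\mathbb{P}(\mathcal{E}_L)}$, and your justification (``the form degree $2n+1-p\geq n+2$ exceeds $\dim F_x$'') is not an argument: the sheaf in play is $\Omega^{2n+1-p}_{\mathbb{P}(\mathcal{E}_L)}|_{F_x}$, \emph{not} $\Omega^{2n+1-p}_{F_x}$. Filtering by the conormal sequence, its graded pieces are $\Omega^c_{F_x}\otimes\bigwedge^{2n+1-p-c}\mathcal{N}^*_{F_x/\mathbb{P}(\mathcal{E}_L)}$ for $n-p\leq c\leq n$, and at the $j=0$ level of your formal-function expansion (the untwisted piece in \eqref{sym}) the groups $H^n(F_x,\Omega^c_{F_x})\cong H^0(F_x,\Omega^{n-c}_{F_x})\cong H^0(X,\Omega^{n-c}_X)$ appear. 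These are Hodge numbers of $X$ and are generically nonzero, so there is no reason for $R^n t_*\Omega^{2n+1-p}_{\mathbb{P}(\mathcal{E}_L)}$ to have small support. Your fiberwise argument cannot close this case.

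The paper avoids this by invoking Saito's decomposition (via \cite[(2.3.5)--(2.3.8)]{KS}): one writes ${\bf R}t_*\Omega^l_{\mathbb{P}(\mathcal{E}_L)}\cong K_l\oplus R_l$ with $\Supp R_l\subseteq X$, observes that the torsion-free sheaf $t_*\Omega^p_{\mathbb{P}(\mathcal{E}_L)}$ equals $\mathcal{H}^0(K_p)$, and uses that the dual of $K_p$ is $K_{2n+1-p}$ together with the Hodge-theoretic bound $\mathcal{H}^k(K_{2n+1-p})=0$ for $k\geq p+1$. That last vanishing, applied at $k=n$, is exactly what the hypothesis $p\leq n-1$ buys, and it is a consequence of the decomposition theorem rather than any elementary fiber computation. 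A smaller point: your $n=1$ reduction appeals to Hartogs on a normal $\Sigma$, but normality is only assumed in the ``in particular'' clause, not in the first assertion; the paper's argument via the $K_l$ decomposition covers $n=1$, $p=0$ uniformly without it.
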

\begin{proof} We recall from \cite[(2.3.5)]{KS} that for all $l\geq 0$, Saito's formalism leads to a decomposition 
\begin{equation*}
    {\bf R}t_*\Omega_{\mathbb{P}(\mathcal{E}_L)}^l\cong K_l\oplus R_l  
\end{equation*}
where $K_l,R_l\in{\bf D}^b(\text{Coh}(\Sigma))$. Among other properties, $K_l$ and $R_l$ enjoy the following (see {\it loc. cit.} (2.3.6), (2.3.7)):
\begin{equation}\label{p1}
    \textrm{Supp$(R_l)\subseteq \Sigma_{\textrm{sing}}\subseteq X$,}
\end{equation}
\begin{equation}\label{p2}
    \textrm{$\mathcal{H}^k(K_l)=0$ for $k\geq 2n-l+2$}
\end{equation}
where \eqref{p1} follows from \propositionref{stronglog}. Since $t_*\Omega^p_{\mathbb{P}(\mathcal{E}_L)}$ is torsion-free, \eqref{p1} implies that $t_*\Omega^p_{\mathbb{P}(\mathcal{E}_L)}\cong \mathcal{H}^0(K_p)$. By \cite[(2.3.8)]{KS} we also have the isomorphism $${\bf R}\mathcal{H}\textrm{\textit{om}}_{\mathcal{O}_{\Sigma}}(K_p,\omega_{\Sigma}^{\bullet})\cong K_{2n+1-p}[2n+1].$$ Thus, by \cite[(2.3.9)]{KS}, it is enough to show that 
\begin{equation}\label{needp}
    \dim\left(X\cap \textrm{Supp}\left(\mathcal{H}^k(K_{2n+1-p})\right)\right)\leq 2n-1-k\,\textrm{ for all $k\in \mathbb{Z}$}.
\end{equation}
Observe that $\dim\left(X\cap \textrm{Supp}\left(\mathcal{H}^k(K_{2n+1-p})\right)\right)\leq n$ whence \eqref{needp} holds for $k\leq n-1$. On the other hand, since the dimension of the fibers of $t$ is $\leq n$, we have $R^kt_*\Omega_{\mathbb{P}(\mathcal{E}_L)}^{2n+1-p}=0$ for $k\geq n+1$, whence \eqref{needp} holds if $k\geq n+1$. Finally, since $n\geq p+1$ by assumption, we have $\mathcal{H}^n\left(K_{2n+1-p}\right)=0$ by \eqref{p2}, and the assertion follows. 
\end{proof}

We need one more result in order to prove \theoremref{thm2.3}. From the short exact sequence 
\begin{equation}\label{piff}
    0\to q^*\Omega^1_{X}\to \Omega^1_{\Phi}\to \Omega^1_{\Phi/X}\to 0,
\end{equation} we have an induced morphism $$\gamma_p: \Omega^p_X \to q_*\Omega^p_{\Phi}$$ by taking the wedge product of the first map and then pushing forward. 

\begin{proposition}\label{propgamma} The maps $\gamma_k$ for $k=1,\ldots ,p$ are isomorphisms if and only if $h^0(X,\Omega^k_X)= 0$ for $k=1,\ldots, p$.
\end{proposition}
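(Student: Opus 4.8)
The plan is to compute $q_*\Omega^p_{\Phi}$ explicitly and to recognize $\gamma_p$ as the inclusion of a direct summand. Precisely, I will establish a natural decomposition of $\mathcal{O}_X$-modules
\begin{equation*}
q_*\Omega^p_{\Phi}\cong\bigoplus_{a+b=p}\Omega^a_X\otimes_{\CC} H^0(X,\Omega^b_X),
\end{equation*}
under which $\gamma_p$ is the inclusion of the summand with $b=0$, i.e.\ $\Omega^p_X\otimes H^0(\mathcal{O}_X)=\Omega^p_X$. Granting this, $\coker(\gamma_p)\cong\bigoplus_{a+b=p,\,b\geq 1}\Omega^a_X\otimes H^0(\Omega^b_X)$, and the proposition falls out: if $h^0(\Omega^k_X)=0$ for $1\leq k\leq p$, then for every $k\leq p$ each summand of $\coker(\gamma_k)$ carries a factor $H^0(\Omega^b_X)$ with $1\leq b\leq k\leq p$, hence vanishes, so $\gamma_k$ is an isomorphism; conversely, if every $\gamma_k$ with $1\leq k\leq p$ is an isomorphism, then the $a=0$ summand of $\coker(\gamma_b)$ forces $\mathcal{O}_X\otimes H^0(\Omega^b_X)=0$, i.e.\ $h^0(\Omega^b_X)=0$, for each $1\leq b\leq p$.

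To obtain the decomposition I would first compute the analogous pushforward on $X\times X$. Since $\Omega^1_{X\times X}\cong p_1^*\Omega^1_X\oplus p_2^*\Omega^1_X$, taking wedge powers gives $\Omega^p_{X\times X}\cong\bigoplus_{a+b=p}p_1^*\Omega^a_X\otimes p_2^*\Omega^b_X$; applying ${p_1}_*$ with the projection formula and the flat base change square used in the proof of \lemmaref{ext} (which gives ${p_1}_*p_2^*\Omega^b_X\cong H^0(\Omega^b_X)\otimes\mathcal{O}_X$, cf.\ \eqref{rjexplicit}) produces ${p_1}_*\Omega^p_{X\times X}\cong\bigoplus_{a+b=p}\Omega^a_X\otimes H^0(\Omega^b_X)$. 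It then remains to transport this to $\Phi$ through $q=p_1\circ b_{\Delta}$, for which the key input is
\begin{equation*}
{b_{\Delta}}_*\Omega^p_{\Phi}\cong\Omega^p_{X\times X}\qquad\text{for all }p.
\end{equation*}
This is the main obstacle. For $n=1$ it is vacuous since $b_{\Delta}$ is an isomorphism, and the resulting statement $q_*\Omega^p_{\Phi}\cong{p_1}_*\Omega^p_{X\times X}$ reduces for $p=1$ to \eqref{isolcd''}; in general I would argue that $b_{\Delta}$ is an isomorphism away from $\Delta$, which has codimension $n\geq 2$ in the smooth variety $X\times X$, so ${b_{\Delta}}_*\Omega^p_{\Phi}$ is torsion-free, agrees with the locally free (hence reflexive) sheaf $\Omega^p_{X\times X}$ in codimension one, and therefore embeds into $j_*\bigl(\Omega^p_{X\times X}|_{(X\times X)\setminus\Delta}\bigr)\cong\Omega^p_{X\times X}$. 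Since the natural map $\Omega^p_{X\times X}\to{b_{\Delta}}_*\Omega^p_{\Phi}$ is injective and an isomorphism over the complement of $\Delta$, comparing the two reflexive extensions gives the claim. Pushing forward by ${p_1}_*$ and using ${b_{\Delta}}_*\mathcal{O}_{\Phi}\cong\mathcal{O}_{X\times X}$ then yields $q_*\Omega^p_{\Phi}\cong{p_1}_*\Omega^p_{X\times X}$.

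Finally I must check that $\gamma_p$ matches the $b=0$ summand, which I would do by functoriality of the cotangent map for the factorization $q=p_1\circ b_{\Delta}$: the map $q^*\Omega^1_X\to\Omega^1_{\Phi}$ defining $\gamma_p$ is the composite $b_{\Delta}^*p_1^*\Omega^1_X\to b_{\Delta}^*\Omega^1_{X\times X}\to\Omega^1_{\Phi}$, whose first arrow is $b_{\Delta}^*$ of the summand inclusion $p_1^*\Omega^1_X\hookrightarrow\Omega^1_{X\times X}$. Taking $p$-th wedge powers, pushing forward by ${b_{\Delta}}_*$, and using the projection formula together with ${b_{\Delta}}_*\Omega^p_{\Phi}\cong\Omega^p_{X\times X}$ and ${b_{\Delta}}_*\mathcal{O}_{\Phi}\cong\mathcal{O}_{X\times X}$ identifies the pushed-forward map with the summand inclusion $p_1^*\Omega^p_X\hookrightarrow\Omega^p_{X\times X}$; a further ${p_1}_*$ and the identification $q_*q^*\Omega^p_X\cong\Omega^p_X$ show that $\gamma_p$ is exactly the inclusion of the $b=0$ summand. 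The one place demanding genuine care is the codimension-one comparison establishing ${b_{\Delta}}_*\Omega^p_{\Phi}\cong\Omega^p_{X\times X}$; everything else is formal bookkeeping with the projection formula and flat base change.
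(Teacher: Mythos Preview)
Your proof is correct, but it takes a genuinely different route from the paper's.

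The paper argues the two implications separately and by different methods. For the forward direction, it uses the relative cotangent sequence for the smooth morphism $q$ and the resulting filtration on $\Omega^k_{\Phi}$ with graded pieces $q^*\Omega^l_X\otimes\Omega^{k-l}_{\Phi/X}$; the vanishing hypotheses kill $q_*\Omega^{k-l}_{\Phi/X}$ via Grauert's theorem, leaving $q_*\Omega^k_{\Phi}\cong\Omega^k_X$. For the converse, it takes $H^0$ of the assumed isomorphism $\gamma_k$, invokes \propositionref{iffeq} (whose proof rests on Deligne's decomposition and taking $\gr^0_F$) together with Hodge symmetry to get $h^{0,k}(\Phi)=\sum_j h^{0,j}(X)h^{0,k-j}(X)$, and then runs an induction on $k$.

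Your approach instead exploits the factorization $q=p_1\circ b_{\Delta}$: you reduce to the Künneth splitting of $\Omega^p_{X\times X}$ and the classical identity ${b_{\Delta}}_*\Omega^p_{\Phi}\cong\Omega^p_{X\times X}$ for the blow-up of a smooth center of codimension $\geq 2$, which you justify by a clean torsion-free/reflexivity sandwich argument. This yields the explicit decomposition $q_*\Omega^p_{\Phi}\cong\bigoplus_{a+b=p}\Omega^a_X\otimes H^0(\Omega^b_X)$ with $\gamma_p$ the $b=0$ summand, from which both implications are immediate and non-inductive. Your argument is more elementary in that it avoids \propositionref{iffeq} and any Hodge-theoretic input, and it produces a sharper intermediate statement (the full decomposition of $q_*\Omega^p_{\Phi}$, valid with no hypothesis on $h^0(\Omega^k_X)$). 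The paper's argument, on the other hand, stays entirely on the level of the smooth map $q$ and never needs to analyze the blow-up $b_{\Delta}$ directly.
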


\begin{proof}
We prove first that given the cohomological conditions, $\gamma_p$ is an isomorphism. For this we use \eqref{piff} again, 
and the fact that all these sheaves are locally free, to obtain a filtration $$\Omega^k_{\Phi} = F^0 \supseteq F^1 \supseteq \cdots \supseteq F^k \supseteq F^{k+1} = 0$$ with quotients $F^l/F^{l+1} \cong q^*\Omega^l_X \otimes \Omega^{k-l}_{\Phi/X}.$ We prove by induction that $q_*F^l \cong \Omega^k_X$. The base case is 
$$0\to F^{k+1} = 0 \to F^k \to q^*\Omega^k_X \to 0,$$ and the claim is clear for $F^k$ by Projection Formula and \cite[Lemma 2.2]{CS18}. Suppose next that $q_*F^l\cong \Omega^k_X$. Consider the short exact sequence $$0 \to F^l \to F^{l-1} \to q^*\Omega^{l-1}_X \otimes \Omega^{k-l+1}_{\Phi/X}\to 0.$$ We pushforward the short exact sequence and by the Projection Formula, we have 
$$0\to \Omega^k_X \to q_*F^{l-1} \to \Omega^{l-1}_X \otimes q_*\Omega^{k-l+1}_{\Phi/X}.$$ Since $h^0(\Omega^{k-l+1}_{F_x}) = h^0(\Omega^{k-l+1}_{X}) = 0$ because $F_x$ is birational to $X$ for all $x\in X$, by Grauert's Theorem (\cite[III, Corollary 12.9]{Har}), $q_*\Omega^{k-l+1}_{\Phi/X} = 0$ and then, $q_*F^{l-1} \cong \Omega^k_X$.

Suppose next that the maps $\gamma_k$ are isomorphisms for $k=1,\dots,p$. 
 We argue by induction. The base case is $p=1$, in which case the assumption says that $\Omega^1_X \cong q_*\Omega^1_{\Phi}$, and therefore, by taking $H^0$ we obtain $h^{1,0}(X) = h^{1,0}(\Phi)$. By \propositionref{iffeq} we have $$h^{1,0}(\Phi) = h^{0,1}(\Phi) = h^{0,1}(X) + h^{0,1}(X).$$ Therefore, $h^{0,1}(X) = h^{0,1}(\Phi) = 0$. This means that $h^0(X,\Omega^1_X)=0$. Suppose now that the result is known for $p=r$, and that $\gamma_k$ are isomorphisms for $k=1,\ldots , r+1$. By the induction hypothesis, $h^0(X,\Omega^k_X)= 0$ for $k=1,\ldots, r$. Moreover, since $\Omega^{r+1}_X \cong q_*\Omega^{r+1}_{\Phi}$, taking $H^0$ we obtain that $h^{r+1,0}(X) = h^{r+1,0}(\Phi).$ By \propositionref{iffeq} and the induction hypothesis, we have $$h^{0,r+1}(\Phi)= h^{0,r+1}(X) + h^{0,r+1}(X).$$ Therefore, $h^{0,r+1}(X) = h^{0,r+1}(\Phi) = 0$. This means that $h^0(X,\Omega^{r+1}_X)=0$.
\end{proof}

Let us describe the maps $\delta_p$ in detail. The functoriality of Du Bois complexes induces a canonical map $\underline{\Omega}^p_{\Sigma}\to{\bf R}t_*\underline{\Omega}^p_{\mathbb{P}(\mathcal{E}_L)}={\bf R}t_*\Omega_{\mathbb{P}(\mathcal{E}_L)}^p$, which yields $\beta_p:\mathcal{H}^0(\underline{\Omega}^p_{\Sigma})\to t_*\Omega_{\mathbb{P}(\mathcal{E}_L)}^p.$ In particular, when $\Sigma$ is normal we obtain for all $p\geq 0$, the natural map 
\begin{equation}\label{defdelk}
    \delta_p:=\varphi_p\circ\beta_p:\mathcal{H}^0(\underline{\Omega}^p_{\Sigma})\to\Omega^{[p]}_{\Sigma}.
\end{equation}

We record a fact that we will use without any further reference.

\begin{remark}
Assume $\Sigma$ is normal. The map $\delta_k$ is an isomorphism if and only if $\beta_k$ and $\varphi_k$ are both isomorphisms. Indeed, this follows immediately from the injectivity of $\varphi_k$.
\end{remark}

We are now ready to provide the 

\begin{proof}[Proof of \theoremref{thm2.3}]
Recall that $\Sigma$ is normal and has Du Bois singularities. In particular, we have the isomorphisms 
\begin{equation}\label{nor}
    \Omega_{\Sigma}^{[2n+1]}\cong {j_{U_X}}_*\omega_{U_X}\cong \omega_{\Sigma}:=\mathcal{H}^{-(2n+1)}\omega_{\Sigma}^{\bullet}.
\end{equation}
Also recall that by \theoremref{lv1} 
\begin{equation}\label{adhoc p}
    R^1t_*\Omega^k_{\mathbb{P}(\mathcal{E}_L)}(\log\Phi)(-\Phi)=0\,\textrm{ for all }\, 0\leq k\leq p.
\end{equation}
We work with the following commutative diagram with exact rows: 
\begin{equation}\label{K}
    \begin{tikzcd}
    0\arrow[r] & t_*\Omega^k_{\mathbb{P}(\mathcal{E}_L)}(\log\Phi)(-\Phi)\arrow[r]\arrow[d, equal] & \mathcal{H}^0(\underline{\Omega}^k_{\Sigma})\arrow[r]\arrow[d, "\beta_k"] & \Omega^k_X\arrow[r]\arrow[d, "\gamma_k"] & 0\\
    0\arrow[r] & t_*\Omega^k_{\mathbb{P}(\mathcal{E}_L)}(\log\Phi)(-\Phi)\arrow[r] & t_*\Omega^k_{\mathbb{P}(\mathcal{E}_L)}\arrow[r] & q_*\Omega^k_{\Phi}\arrow[r] & 0
\end{tikzcd}
\end{equation}
where the top sequence is obtained by passing to the cohomology of \eqref{ste}, the bottom row is obtained by taking the direct images of the sequence $$0\to \Omega^k_{\mathbb{P}(\mathcal{E}_L)}(\log\Phi)(-\Phi)\to  \Omega^k_{\mathbb{P}(\mathcal{E}_L)}\to \Omega^k_{\Phi}\to 0.$$ Both rows are exact on the right because of \eqref{adhoc p}. 

First assume $H^0(\Omega_X^k)=0$ for $1\leq k\leq p$. Then, by Proposition \propositionref{propgamma}, $\gamma_k$ is an isomorphism for all $1\leq k\leq p$ whence $\beta_k$'s are isomorphisms in the same range. If $p\leq n-1$, then the conclusion follows since $\varphi_k:t_*\Omega_{\mathbb{P}(\mathcal{E}_L)}^k\to \Omega_{\Sigma}^{[k]}$ are isomorphism by \propositionref{pn1}. If $p\geq n$, then $H^n(\mathcal{O}_X)=0$ by assumption, whence $t_*\omega_{\mathbb{P}(\mathcal{E}_L)}\cong \omega_{\Sigma}$ by \cite[Theorem 5.8]{CS18}. Thus, the conclusion in this case follows by \cite[Theorem 1.4]{KS}. Conversely, assume $\delta_k:\mathcal{H}^0(\underline{\Omega}^k_{\Sigma})\to\Omega^{[k]}_{\Sigma}$ are isomorphisms for $1\leq k\leq p$, whence $\beta_k$'s are isomorphisms. Now the conclusion follows again from \propositionref{propgamma}. This completes the proof.
\end{proof}

\begin{proof}[Proof of \corollaryref{corpdb}] Under our assumptions, $\Sigma$ has pre-$p$-Du Bois singularities by \theoremref{thm1} and satisfies the codimension condition as $p\leq \lfloor\frac{n}{2}\rfloor$. Also recall that $\Sigma$ is normal and has Du Bois singularities, and in particular is seminormal. We may assume that $n\geq 2$, $p\geq 1$. 
Note that $\mathcal{H}^0(\underline{\Omega}_{\Sigma}^k)$ is reflexive for $1\leq k\leq p$ if and only if 
$\delta_k$'s are isomorphisms for $1\leq k\leq p$. The conclusion now follows from \theoremref{thm2.3}.
\end{proof}

\begin{example}[Curves embedded by line bundles of smaller degree]\label{cliff}
Suppose $X$ is a curve and $L$ is a non-special (i.e., $H^1(L)=0$) $3$-very ample line bundle on $X$. Note that $L$ satisfies $(Q3_p)$ for all $p\geq 0$ by assumption. In this case:
\begin{itemize}
    \item[(i)] Our proof shows that the singularities of $\Sigma$ are pre-$p$-Du Bois for all $p\geq 0$. 
    \item[(ii)] \cite[Proof of Theorem 3.4]{CS18} shows that the singularities of $\Sigma$ are Du Bois if $\Sigma$ is normal.
    \item[(iii)] Recall that the main result of \cite{Ull16} asserts that $\Sigma$ is normal if $L(-2x)$ is projectively normal for $x\in X$. In particular, via \cite[Theorem 1]{GL} (see \cite[Proof of Corollary B]{Ull16}), if one of the following holds:
    \begin{enumerate}
        \item $\textrm{deg}(L)=2g+1$ and $\textrm{Cliff}(X)\geq 2$ (equivalently, $X$ is not hyperelliptic, trigonal, or plane quintic); or
        \item $\textrm{deg}(L)=2g+1$ and $\textrm{Cliff}(X)\geq 1$ (equivalently $X$ is not hyperelliptic),
    \end{enumerate}
    then $\Sigma$ is normal and has Du Bois singularities.
\end{itemize}
On a complementary direction, although the secant varieties of canonical curves with $\textrm{Cliff}(X)\geq 3$ are normal by \cite[Corollary B]{Ull16}, it was shown in \cite{CS18} that in this case the singularities of $\Sigma$ are not Du Bois.
\end{example}

\subsection{Further results} We can actually say more about the associated graded pieces of the Du Bois complex (under the standing assumption of 3-very ampleness of $L$) that are usually not considered in the definition of higher Du Bois singularities. In particular, we aim to prove the following: 

\begin{theorem}\label{thm2} Let $\delta_k:\mathcal{H}^0(\underline{\Omega}^k_{\Sigma})\to \Omega_{\Sigma}^{[k]}$ be the natural maps defined in \eqref{defdelk}. Then the following statements hold:
\begin{enumerate}
    \item Assume $L$ satisfies $(Q_n)$-property. Then the following are equivalent:
    \begin{itemize}
        \item[(i)] $H^n(\mathcal{O}_X)=0$,
        \item[(ii)] $\delta_{2n+1}$ is an isomorphism,
        \item[(iii)] $\delta_{2n}$ is an isomorphism.
    \end{itemize}
    \item Assume $n\geq 3$ and $L$ satisfies $(Q_n)$-property. Let $n+2\leq p\leq 2n-1$. If $H^k(\mathcal{O}_X)=0$ for all $k\geq p-n-1$, then $\delta_p$ is an isomorphism.\footnote{When $L$ is 3-very ample, the conclusions of \theoremref{thm2} are also valid if $L$ satisfies $(Q3_n)$ and $\Sigma$ is normal.}
\end{enumerate}
\end{theorem}

In order to prove part (2) of the above theorem, we need another local vanishing result:

\begin{proposition}\label{prop1}
Let $i,p\geq 1$ be integers and let $L$ be a $3$-very ample line bundle. Suppose that for all $x\in X$, $j\geq 0$, and $0\leq q\leq p$,
\begin{equation}\label{onlylog}
    H^i(\Omega_{F_x}^q\otimes b_x^*(jL)(-2jE_x))=0.
\end{equation}
Then $R^{i}t_*\Omega^p_{\mathbb{P}(\mathcal{E}_L)}(\log\Phi)=0$. 
\end{proposition}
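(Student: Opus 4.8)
The plan is to reprise almost verbatim the three-step argument used to prove \propositionref{dbvg}, the only modification being the bookkeeping of the twisting sheaf. As there, \propositionref{stronglog} (\ref{stronglog1}) guarantees that $t$ is an isomorphism over $U_X=\Sigma\setminus X$, so $R^it_*\Omega^p_{\mathbb{P}(\mathcal{E}_L)}(\log\Phi)$ is supported on $X$ for $i\geq 1$, and it suffices to show that its formal stalk vanishes at every $x\in X$. By the formal function theorem this reduces to
\[
    H^i\bigl(\Omega^p_{\mathbb{P}(\mathcal{E}_L)}(\log\Phi)\otimes(\mathcal{O}_{\mathbb{P}(\mathcal{E}_L)}/\mathcal{I}_{F_x}^j)\bigr)=0\quad\textrm{for all }j\geq 1,
\]
and, by filtering through the exact sequences relating $\mathcal{O}/\mathcal{I}_{F_x}^{j+1}$, $\mathcal{O}/\mathcal{I}_{F_x}^{j}$ and the graded pieces $\mathcal{I}_{F_x}^j/\mathcal{I}_{F_x}^{j+1}$, to the vanishing of $H^i\bigl(\Omega^p_{\mathbb{P}(\mathcal{E}_L)}(\log\Phi)\otimes(\mathcal{I}_{F_x}^j/\mathcal{I}_{F_x}^{j+1})\bigr)$ for all $j\geq 0$.

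The one genuine difference from \propositionref{dbvg} arises here. Applying \eqref{sym} and restricting to $F_x$, one finds
\[
    \Omega^p_{\mathbb{P}(\mathcal{E}_L)}(\log\Phi)\otimes(\mathcal{I}_{F_x}^j/\mathcal{I}_{F_x}^{j+1})\cong\bigoplus_{m=0}^{j}\bigoplus^{\binom{n+j-m-1}{n-1}}\Omega^p_{\mathbb{P}(\mathcal{E}_L)}(\log\Phi)|_{F_x}\otimes b_x^*(mL)(-2mE_x).
\]
Because the present statement carries no $-\Phi$ twist, the summand with $m=0$ genuinely occurs, whereas in \propositionref{dbvg} the factor $\mathcal{O}(-\Phi)|_{F_x}\cong b_x^*L(-2E_x)$ coming from \eqref{0} shifts every twist up by one, so that only $m\geq 1$ appears there. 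This is precisely why the hypothesis \eqref{onlylog} is imposed for $j\geq 0$ rather than $j\geq 1$. Thus it suffices to prove
\[
    H^i\bigl(\Omega^p_{\mathbb{P}(\mathcal{E}_L)}(\log\Phi)|_{F_x}\otimes b_x^*(mL)(-2mE_x)\bigr)=0\quad\textrm{for all }x\in X,\ m\geq 0.
\]

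For this I invoke \claimref{cl2} and \claimref{cl3} exactly as stated, both of which are already formulated for arbitrary $m\geq 0$: \claimref{cl2} (which rests on twisting the sequence \eqref{ex1} by $b_x^*(mL)(-2mE_x)$) reduces the above to the vanishing of $H^i(\Omega^q_{\Phi}|_{F_x}\otimes b_x^*(mL)(-2mE_x))$ for $q=p-1,p$, and \claimref{cl3} (which rests on the filtration \eqref{filt}) reduces that in turn to $H^i(\Omega^{q'}_{F_x}\otimes b_x^*(mL)(-2mE_x))=0$ for $0\leq q'\leq p$, which is exactly the hypothesis \eqref{onlylog}. Combining the three reductions yields the claim.

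I expect no serious obstacle: the entire argument is a twist-by-twist transcription of \propositionref{dbvg}, and the only point deserving attention is verifying that the $m=0$ term is covered by \eqref{onlylog}, which it is by design. One should simply record, as in \propositionref{dbvg}, that \eqref{ex1} and the filtration \eqref{filt} persist after tensoring with $b_x^*(mL)(-2mE_x)$ since all the sheaves involved are locally free.
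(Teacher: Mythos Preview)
Your proof is correct and follows essentially the same approach as the paper's own proof: a formal function theorem reduction to the graded pieces via \eqref{sym}, followed by an appeal to \claimref{cl2} and \claimref{cl3}. Your explicit remark on why the $m=0$ summand appears here (unlike in \propositionref{dbvg}) and why this forces the hypothesis \eqref{onlylog} to include $j=0$ is a helpful clarification that the paper leaves implicit.
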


\begin{proof} As we did in the proof of \propositionref{dbvg}, we first prove the following 

\begin{claim}\label{cl1}
    If $H^i(\Omega^p_{\mathbb{P}(\mathcal{E}_L)}(\log\Phi)|_{F_x}\otimes b_x^*(mL)(-2mE_x))$ for all $x\in X$ and for all $m\geq 0$ then $R^it_*\Omega^p_{\mathbb{P}(\mathcal{E}_L)}(\log\Phi)=0$.
\end{claim}
\begin{proof}
Clearly $(R^it_*\Omega^p_{\mathbb{P}(\mathcal{E}_L)}(\log\Phi))_y=0$ if $y\in\Sigma$ and $y\notin X$. For $x\in X$, we use the formal function theorem 
\begin{equation*}
    \left(R^it_*\Omega^p_{\mathbb{P}(\mathcal{E}_L)}(\log\Phi)\right)_x^{\widehat{}}\cong \varprojlim H^i(\Omega^p_{\mathbb{P}(\mathcal{E}_L)}(\log\Phi)\otimes(\mathcal{O}_{\mathbb{P}(\mathcal{E}_L)}/\mathcal{I}_{F_x}^j)).
\end{equation*}
Thus, in order to prove the assertion, it is enough to show that 
\begin{equation}\label{ratv1}
    H^i(\Omega^p_{\mathbb{P}(\mathcal{E}_L)}(\log\Phi)\otimes(\mathcal{O}_{\mathbb{P}(\mathcal{E}_L)}/\mathcal{I}_{F_x}^j))=0\textrm{ for all $j\geq 1$}.
\end{equation}
Passing to the cohomology of the exact sequence
\begin{equation}\label{indratv}
\begin{split}
    0\to\Omega^p_{\mathbb{P}(\mathcal{E}_L)}(\log\Phi)\otimes(\mathcal{I}_{F_x}^j/\mathcal{I}_{F_x}^{j+1})\to\Omega^p_{\mathbb{P}(\mathcal{E}_L)}(\log\Phi)\otimes(\mathcal{O}_{\mathbb{P}(\mathcal{E}_L)}/\mathcal{I}_{F_x}^{j+1})\\
    \to \Omega^p_{\mathbb{P}(\mathcal{E}_L)}(\log\Phi)\otimes(\mathcal{O}_{\mathbb{P}(\mathcal{E}_L)}/\mathcal{I}_{F_x}^{j})\to 0,
\end{split}
\end{equation}
we conclude that to prove \eqref{ratv1}, it is enough to show that 
\begin{equation}\label{needratv}
    H^i(\Omega^p_{\mathbb{P}(\mathcal{E}_L)}(\log\Phi)\otimes(\mathcal{I}_{F_x}^j/\mathcal{I}_{F_x}^{j+1}))=0\textrm{ for all $j\geq 0$.}
\end{equation}
Observe that by \eqref{sym}, we have 
\begin{equation}\label{sym'}
    H^i(\Omega^p_{\mathbb{P}(\mathcal{E}_L)}(\log\Phi)\otimes(\mathcal{I}_{F_x}^j/\mathcal{I}_{F_x}^{j+1}))\cong \bigoplus\limits_{m=0}^j\left[H^i(\Omega^p_{\mathbb{P}(\mathcal{E}_L)}(\log\Phi)|_{F_x}\otimes b_x^*(mL)(-2mE_x))\right]^{\oplus \binom{n+j-m-1}{n-1}}
\end{equation}
and the conclusion follows.
\end{proof}
We now invoke \claimref{cl2} and \claimref{cl3}, which immediately completes the proof.
\end{proof}

The previous result can be used to obtain one more reflexivity statement:

\begin{proposition}\label{toptob}
Let $n\geq 3$ and $n+2\leq p\leq 2n-1$. Assume the following conditions:
\begin{itemize}
\item[(i)] $H^n(\Omega_{F_x}^q\otimes b_x^*(jL)(-jE_x))=0 \,\textrm{ for all }\, j\geq 1$, $0\leq q\leq 2n+1-p$,
\item[(ii)] $H^0(\Omega_{X}^{n-q})=0\,\textrm{ for all }\, 0\leq q\leq 2n+1-p$.
\end{itemize} 
Then the two natural maps $$t_*\Omega_{\mathbb{P}(\mathcal{E}_L)}^p(\log\Phi)(-\Phi)\hookrightarrow t_*\Omega_{\mathbb{P}(\mathcal{E}_L)}^p\hookrightarrow{j_{U_X}}_*\Omega_{U_X}^p$$ are isomorphisms. In particular, if (i) and (ii) hold and if $\Sigma$ is normal, then the map $$t_*\Omega_{\mathbb{P}(\mathcal{E}_L)}^p(\log\Phi)(-\Phi)\to\Omega_{\Sigma}^{[p]}$$ is an isomorphism. 
\end{proposition}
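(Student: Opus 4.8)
The plan is to obtain both isomorphisms simultaneously by proving that the composite $\Psi\colon t_*\Omega^p_{\mathbb{P}(\mathcal{E}_L)}(\log\Phi)(-\Phi)\to{j_{U_X}}_*\Omega_{U_X}^p$ is an isomorphism. Both maps in the statement are injective: the first because $t_*$ is left exact applied to the inclusion $\Omega^p_{\mathbb{P}(\mathcal{E}_L)}(\log\Phi)(-\Phi)\hookrightarrow\Omega^p_{\mathbb{P}(\mathcal{E}_L)}$ from \eqref{omega}, and the second is the inclusion $\phi_p$ into the pushforward of forms off $X$ (by torsion-freeness). Hence once $\Psi$ is an isomorphism, so are both factors. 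Since $t$ is an isomorphism over $U_X=\Sigma\setminus X$ and $\Phi$ is disjoint from $t^{-1}(U_X)$, all three sheaves restrict to $\Omega_{U_X}^p$ there, so $\Psi$ is automatically an isomorphism over $U_X$; the whole content is to extend this across $X$.

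The key tool is the Kebekus--Schnell reflexivity criterion already used in \propositionref{pn1} (see \cite[(2.3.9)]{KS}): for $K\in{\bf D}^b(\mathrm{Coh}(\Sigma))$ with $\mathcal{H}^j(K)=0$ for $j<0$, the natural map $\mathcal{H}^0(K)\to{j_{U_X}}_*j_{U_X}^*\mathcal{H}^0(K)$ is an isomorphism provided $\dim\bigl(X\cap\Supp\mathcal{H}^m({\bf D}_{\Sigma}(K))\bigr)\le 2n-1-m$ for all $m$, exactly as in \eqref{needp}. I would apply this to $K:={\bf R}t_*\Omega^p_{\mathbb{P}(\mathcal{E}_L)}(\log\Phi)(-\Phi)$, for which $\mathcal{H}^0(K)=t_*\Omega^p_{\mathbb{P}(\mathcal{E}_L)}(\log\Phi)(-\Phi)$ and $j_{U_X}^*\mathcal{H}^0(K)=\Omega_{U_X}^p$; the criterion then yields precisely that $\Psi$ is an isomorphism.

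To verify the hypothesis I would first identify the dual complex. On the smooth $(2n+1)$-fold $\mathbb{P}(\mathcal{E}_L)$ with the simple normal crossing divisor $\Phi$, the log pairing gives $\Omega^p_{\mathbb{P}(\mathcal{E}_L)}(\log\Phi)(-\Phi)\cong\mathcal{H}\textit{om}\bigl(\Omega^{2n+1-p}_{\mathbb{P}(\mathcal{E}_L)}(\log\Phi),\omega_{\mathbb{P}(\mathcal{E}_L)}\bigr)$, so Grothendieck duality for $t$ (with $t^!\omega_{\Sigma}^\bullet=\omega_{\mathbb{P}(\mathcal{E}_L)}[2n+1]$) and biduality give ${\bf D}_{\Sigma}(K)\cong{\bf R}t_*\Omega^{p'}_{\mathbb{P}(\mathcal{E}_L)}(\log\Phi)$, where $p':=2n+1-p\in[2,n-1]$. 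Thus the supports to control are those of $R^m t_*\Omega^{p'}_{\mathbb{P}(\mathcal{E}_L)}(\log\Phi)$, which vanish for $m>n$ and are contained in $X$ for $m\ge1$. A direct check of the numerics shows that the bound $\dim(X\cap\Supp)\le 2n-1-m$ is automatic in every degree except $m=n$, where it becomes $\dim\Supp R^n t_*\Omega^{p'}_{\mathbb{P}(\mathcal{E}_L)}(\log\Phi)\le n-1$; equivalently, this sheaf must vanish at the generic point of $X$.

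The heart of the argument is therefore this single top-degree vanishing, and it is where the two hypotheses enter. Running the formal-function and conormal-filtration reduction of \propositionref{prop1} (restrict to $F_x$, filter $\mathcal{O}_{\mathbb{P}(\mathcal{E}_L)}/\mathcal{I}_{F_x}^{\bullet}$ by \eqref{sym}, peel off the log-restriction sequence \eqref{ex1}, and use $\mathcal{N}^*_{F_x/\Phi}\cong\mathcal{O}_{F_x}^{\oplus n}$ from \eqref{0}), the stalk of $R^n t_*\Omega^{p'}_{\mathbb{P}(\mathcal{E}_L)}(\log\Phi)$ at a general $x\in X$ is governed by groups of the form $H^n\bigl(\Omega^{q'}_{F_x}\otimes b_x^*(\ell L)(-2\ell E_x)\bigr)$ with $0\le q'\le p'$ and $\ell\ge0$. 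For $\ell=0$ one has $H^n(\Omega^{q'}_{F_x})\cong H^0(\Omega^{n-q'}_X)^\vee$ by Serre duality on $F_x$ together with the birational invariance $H^0(\Omega^{n-q'}_{F_x})=H^0(\Omega^{n-q'}_X)$, which vanishes by hypothesis (ii); the twisted terms $\ell\ge1$ vanish by hypothesis (i). Granting this, the criterion applies, $\Psi$ is an isomorphism, and both maps in the statement are isomorphisms; when $\Sigma$ is normal, \eqref{jux} identifies ${j_{U_X}}_*\Omega_{U_X}^p\cong\Omega_{\Sigma}^{[p]}$, giving the final assertion. I expect the main obstacle to be exactly the degree $m=n$ case: one must run the nested filtrations carefully and match the fibrewise twists they produce to the vanishing furnished by (i) and (ii), the reconciliation of the conormal twists with the exponents in hypothesis (i) via Serre duality on $F_x$ and the adjunction $\omega_{\mathbb{P}(\mathcal{E}_L)}|_{F_x}\cong\omega_{F_x}\otimes b_x^*L(-2E_x)$ being the delicate bookkeeping.
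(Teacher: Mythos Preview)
Your proposal is correct and follows essentially the same route as the paper: apply the Kebekus--Schnell criterion (\cite[Proposition~6.4]{KS}) to $K={\bf R}t_*\Omega^p_{\mathbb{P}(\mathcal{E}_L)}(\log\Phi)(-\Phi)$, identify ${\bf D}_{\Sigma}(K)\cong{\bf R}t_*\Omega^{2n+1-p}_{\mathbb{P}(\mathcal{E}_L)}(\log\Phi)$ via Grothendieck duality and the log pairing, observe that the dimension bound is automatic for $m\le n-1$ and $m\ge n+1$, and then establish $R^n t_*\Omega^{2n+1-p}_{\mathbb{P}(\mathcal{E}_L)}(\log\Phi)=0$ by feeding hypotheses (i) and (ii) (the latter via Serre duality on $F_x$) into the formal-function/filtration machinery of \propositionref{prop1}. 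One small remark: at $m=n$ the criterion only asks that the support have dimension $\le n-1$, which you phrase as vanishing at the generic point of $X$; both you and the paper in fact prove the stronger statement that the sheaf vanishes outright, which is what \propositionref{prop1} delivers.
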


\begin{proof} We first note the following isomorphisms
\begin{equation*}
\begin{split}
    {\bf R}\mathcal{H}\textit{om}_{\mathcal{O}_{\Sigma}}\left({\bf R}t_*\Omega_{\mathbb{P}(\mathcal{E}_L)}^p(\log\Phi)(-\Phi),\omega_{\Sigma}^{\bullet}\right) & \cong {\bf R}t_*{\bf R}\mathcal{H}\textit{om}_{\mathcal{O}_{\mathbb{P}(\mathcal{E}_L)}}\left(\Omega_{\mathbb{P}(\mathcal{E}_L)}^p(\log\Phi)(-\Phi),\omega_{\mathbb{P}(\mathcal{E}_L)}[2n+1]\right)\\
     & \cong {\bf R}t_*\Omega_{\mathbb{P}(\mathcal{E}_L)}^{2n+1-p}(\log\Phi)[2n+1],
\end{split}
\end{equation*} where the first one is obtained via duality.
Observe that it is enough to show that the natural map $$t_*\Omega_{\mathbb{P}(\mathcal{E}_L)}^p(\log\Phi)(-\Phi)\hookrightarrow {j_{U_X}}_*\Omega_{U_X}^p$$ is an isomorphism. Thus, by \cite[Proposition 6.4]{KS}, it is enough to show that 
\begin{equation}\label{extn}
    \dim\left(X\cap\textrm{Supp}\left(R^k\Omega_{\mathbb{P}(\mathcal{E}_L)}^{2n+1-p}(\log\Phi)\right)\right)\leq 2n-1-k\,\textrm{ for all }\, k\in\mathbb{Z}.
\end{equation}
As in the proof of \propositionref{pn1}, \eqref{extn} holds if $k\leq n-1$ or if $k\geq n+1$. Thus it is enough to show that 
\begin{equation}\label{needmore}
    R^n\Omega_{\mathbb{P}(\mathcal{E}_L)}^{2n+1-p}(\log\Phi)=0.
\end{equation} Notice that \begin{equation}\label{more}
    H^n(\Omega_{F_x}^q\otimes b_x^*(jL)(-jE_x))=0 \,\textrm{ for all }\, j\geq 0, 0\leq q\leq 2n+1-p.
\end{equation}
Indeed \eqref{more} follows from hypothesis (i) for $j\geq 1$. For $j=0$, the vanishing follows from hypothesis (ii) as we have $h^n(\Omega_{F_x}^q)= 
h^0(\Omega_{F_x}^{n-q})=h^0(\Omega_X^{n-q}).$ 
Now \eqref{needmore} follows from \propositionref{prop1}.
\end{proof}

\begin{proof}[Proof of \theoremref{thm2}.] 
Recall that under our assumption, $\Sigma$ is normal and has Du Bois singularities. We also recall that $\delta_k$ is an isomorphism if and only if $\beta_k$ and $\varphi_k$ are isomorphisms.

We first prove (1) and set $p=2n+1$. It follows from the middle column of \eqref{K} that $$\mathcal{H}^0(\underline{\Omega}^{2n+1}_{\Sigma})\cong t_*\omega_{\mathbb{P}(\mathcal{E}_L)}.$$ Thus, using \eqref{nor}, we see that the equivalence of (i) and (ii) is a consequence of \cite[Theorem 5.8]{CS18}. We now show the equivalence of (i) and (iii). If (i) holds, then as before, we see that the inclusion $t_*\omega_{\mathbb{P}(\mathcal{E}_L)}\hookrightarrow{j_{U_X}}_*\omega_{U_X}\cong \omega_{\Sigma}$ is an isomorphism by \cite[Theorem 5.8]{CS18}. Thus, by \cite[Theorem 1.5]{KS}, we see that the two morphisms $$t_*\Omega^{2n}_{\mathbb{P}(\mathcal{E}_L)}(\log\Phi)(-\Phi)\hookrightarrow t_*\Omega^{2n}_{\mathbb{P}(\mathcal{E}_L)}\hookrightarrow{j_{U_X}}_*\Omega_{U_X}^{2n}$$ are both isomorphisms. Consequently, it follows from \eqref{K} that $\beta_{2n}$ and $\varphi_{2n}$ are both isomorphisms. Conversely, assume (iii) holds. Then \eqref{K} implies $H^0(\omega_{\Phi})=0$ whence by \propositionref{iffeq} we get $H^0(\omega_X)=0$ which is (i).

Now we prove (2). Since $p\geq n+2$ by assumption, it follows from the first row of \eqref{K} that $$t_*\Omega_{\mathbb{P}(\mathcal{E}_L)}^p(\log\Phi)(-\Phi)\cong\mathcal{H}^0(\underline{\Omega}^p_{\Sigma}).$$ The conclusion now follows from \propositionref{toptob}.
\end{proof}

%\theoremref{prop1} (2) is ``off by one'' from our naive expectation:
%\begin{question}
%Assume $n\geq 2$ and $L$ satisfies $(Q_n)$ property. Let $n+1\leq p\leq 2n-1$. Is the following true: $\delta_k$ is an isomorphism for all $k\geq p$ if and only if $H^k(\mathcal{O}_X)=0$ for all $k\geq p-n$?
%\end{question}

\begin{remark}
If the natural map $\delta_k:\mathcal{H}^0(\underline{\Omega}^k_{\Sigma})\to \Omega_{\Sigma}^{[k]}$ is an isomorphisms for some $k\geq 1$, then $$\min\left\{h^0(\Omega_X^{k-i}),h^0(\Omega_X^i)\right\}=0\,\textrm{ for all $0\leq i\leq k$}.$$
Indeed, since $\delta_k$ is an isomorphism, $\beta_k$ is also an isomorphism. Consequently, from \eqref{K} we see that $\gamma_k$ is an isomorphism, whence $h^0(\Omega_X^k)=h^0(\Omega_{\Phi}^k)$. The conclusion now follows from \propositionref{iffeq}.    
\end{remark}

\begin{corollary}\label{graded}
    Let $p\in\mathbb{N}$. If one of the following holds:
    \begin{itemize}
    \item[(i)] $L$ satisfies $(Q_p)$-property and $H^k(\mathcal{O}_{X})=0$ for $1\leq k\leq p$;
    \item[(ii)] $n\geq 2$, $L$ satisfies $(Q_n)$-property, $p\in\left\{2n, 2n+1\right\}$, and $H^n(\mathcal{O}_X)=0$;
    \item[(iii)] $n\geq 3$, $L$ satisfies $(Q_n)$-property, $n+2\leq p\leq 2n-1$, and $H^{k}(\mathcal{O}_X)=0$ for all $k\geq p-n-1$;
    \end{itemize}
    then there is a natural quasi-isomorphism $\underline{\Omega}^p_{\Sigma}\cong\Omega_{\Sigma}^{[p]}$.
\end{corollary}

\begin{remark}
    Notice that the condition on $k$ in (i) is vacuous when $p=0$.
\end{remark}

\begin{proof} This is an immediate consequence of \theoremref{thm1}, \theoremref{thm2.3} and \theoremref{thm2}.\end{proof}

\corollaryref{nakano} and \corollaryref{corhdiff} are special cases of the following more general results:

\begin{corollary}\label{nakano'}
    Let $p\in\mathbb{N}$ and let $\mathcal{L}$ be an ample line bundle on $\Sigma$. If one of the conditions (i), (ii) or (iii) of \corollaryref{graded} holds, then $$H^q(\Omega_{\Sigma}^{[p]}\otimes \mathcal{L})=0\,\textrm{ when $p+q>\dim\Sigma=2n+1$}.$$
\end{corollary}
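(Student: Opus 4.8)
The plan is to reduce the statement to a Kodaira--Akizuki--Nakano vanishing for the Du Bois complex and then prove that vanishing on the resolution $t\colon\mathbb{P}(\mathcal{E}_L)\to\Sigma$. Under any of the hypotheses (i)--(iv), \corollaryref{graded} supplies a natural quasi-isomorphism $\underline{\Omega}^p_{\Sigma}\cong\Omega_{\Sigma}^{[p]}$; in particular $\underline{\Omega}^p_{\Sigma}$ is concentrated in degree zero and
\[
H^q\bigl(\Omega_{\Sigma}^{[p]}\otimes\mathcal{L}\bigr)\cong\mathbb{H}^q\bigl(\Sigma,\underline{\Omega}^p_{\Sigma}\otimes\mathcal{L}\bigr).
\]
So it is enough to show $\mathbb{H}^q(\Sigma,\underline{\Omega}^p_{\Sigma}\otimes\mathcal{L})=0$ whenever $p+q>2n+1=\dim\Sigma$.

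To access the left-hand side I would tensor the exact triangle \eqref{ste} by $\mathcal{L}$ and apply the projection formula to the first term, obtaining
\[
{\bf R}t_*\bigl(\Omega^p_{\mathbb{P}(\mathcal{E}_L)}(\log\Phi)(-\Phi)\otimes t^*\mathcal{L}\bigr)\to\underline{\Omega}^p_{\Sigma}\otimes\mathcal{L}\to\Omega^p_X\otimes\mathcal{L}|_X\xrightarrow{+1},
\]
where the third term is $\Omega^p_X\otimes\mathcal{L}|_X$ because $X$ is smooth, so $\underline{\Omega}^p_X=\Omega^p_X$. In the resulting long exact sequence of hypercohomology over $\Sigma$, the group $\mathbb{H}^q(\Sigma,\underline{\Omega}^p_{\Sigma}\otimes\mathcal{L})$ sits between $\mathbb{H}^q(\mathbb{P}(\mathcal{E}_L),\Omega^p_{\mathbb{P}(\mathcal{E}_L)}(\log\Phi)(-\Phi)\otimes t^*\mathcal{L})$ and $H^q(X,\Omega^p_X\otimes\mathcal{L}|_X)$. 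The latter vanishes automatically once $p+q>2n+1$: if $p>n$ then $\Omega^p_X=0$, and if $p\le n$ then $q>2n+1-p\ge n+1>\dim X$. Hence the whole problem reduces to the logarithmic vanishing
\[
H^q\bigl(\mathbb{P}(\mathcal{E}_L),\Omega^p_{\mathbb{P}(\mathcal{E}_L)}(\log\Phi)(-\Phi)\otimes t^*\mathcal{L}\bigr)=0\qquad\textrm{for }p+q>2n+1.
\]

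For this I would invoke Serre duality on the smooth projective variety $\mathbb{P}(\mathcal{E}_L)$ of dimension $N=2n+1$, together with the perfect pairing $\bigl(\Omega^p_{\mathbb{P}(\mathcal{E}_L)}(\log\Phi)(-\Phi)\bigr)^{\vee}\otimes\omega_{\mathbb{P}(\mathcal{E}_L)}\cong\Omega^{N-p}_{\mathbb{P}(\mathcal{E}_L)}(\log\Phi)$, which converts the target into
\[
H^b\bigl(\mathbb{P}(\mathcal{E}_L),\Omega^a_{\mathbb{P}(\mathcal{E}_L)}(\log\Phi)\otimes(t^*\mathcal{L})^{-1}\bigr)=0\qquad\textrm{for }a+b<N,
\]
namely the logarithmic Akizuki--Nakano vanishing below the middle degree for the line bundle $t^*\mathcal{L}$. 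The main obstacle is that $t^*\mathcal{L}$ is only nef and big, not ample, so the classical Esnault--Viehweg form does not apply verbatim. Here the geometry is favorable: by \propositionref{stronglog} the map $t$ is an isomorphism over $\Sigma\setminus X$, so the augmented base locus of $t^*\mathcal{L}$ is contained in the exceptional divisor $\Phi$, which is exactly the logarithmic boundary. The crux of the proof is therefore to establish the log Akizuki--Nakano vanishing for a nef and big line bundle whose augmented base locus lies in the boundary divisor. Alternatively, one can bypass the resolution entirely and deduce $\mathbb{H}^q(\Sigma,\underline{\Omega}^p_{\Sigma}\otimes\mathcal{L})=0$ from Saito's vanishing theorem applied to the (shifted) constant Hodge module on $\Sigma$, whose graded de Rham pieces recover $\underline{\Omega}^p_{\Sigma}$; this route uses the genuinely ample bundle $\mathcal{L}$ on $\Sigma$ and avoids the nef-and-big subtlety altogether.
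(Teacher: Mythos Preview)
Your proposal is correct, and in fact your final sentence is exactly the paper's proof. The paper's argument is one line: combine \corollaryref{graded} with the general Kodaira--Akizuki--Nakano vanishing for the Du Bois complex, namely $\mathbb{H}^q(\Sigma,\underline{\Omega}^p_{\Sigma}\otimes\mathcal{L})=0$ for $p+q>\dim\Sigma$ and $\mathcal{L}$ ample, which the authors cite as \cite[Theorem V.5.1]{GN} (the Guill\'en--Navarro Aznar version; equivalently Saito's vanishing applied to the constant Hodge module). This is precisely your ``alternative'' route at the end.

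The bulk of your write-up takes a genuinely different path: you push the question to the resolution via the triangle \eqref{ste} and reduce to a logarithmic Akizuki--Nakano vanishing on $\mathbb{P}(\mathcal{E}_L)$ for the nef and big line bundle $t^*\mathcal{L}$. This is a valid strategy, and your observation that the non-ample locus of $t^*\mathcal{L}$ is contained in the boundary $\Phi$ is exactly what is needed to invoke the appropriate Esnault--Viehweg-type extension. But it is considerably more work than necessary: the paper's approach uses the genuinely ample $\mathcal{L}$ on $\Sigma$ itself and never touches the resolution, so the nef-and-big subtlety you identify simply does not arise. Your detour would be the right move if no such intrinsic vanishing for $\underline{\Omega}^p$ were available; since one is, the paper just quotes it.
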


\begin{proof}
    This is an immediate consequence of \corollaryref{graded} and \cite[Theorem V.5.1]{GN}.
\end{proof}

\begin{corollary}[Description of the $h$-differentials]\label{h'}
    Let $p\in\mathbb{N}$. If one of the conditions (i), (ii) or (iii) of \corollaryref{graded} holds, then there is a natural isomorphism $\Omega^p_h|_{\Sigma}\cong\Omega_{\Sigma}^{[p]}$.
\end{corollary}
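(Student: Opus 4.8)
The plan is to deduce the statement formally from \corollaryref{graded} together with the general comparison between $h$-differentials and the Du Bois complex. The key external input is the theorem of Huber--J\"order \cite{HJ}: for any reduced variety $Z$ over a field of characteristic zero there is a natural isomorphism of coherent sheaves
$$\Omega^p_h|_Z\cong\mathcal{H}^0(\underline{\Omega}^p_Z),$$
with no hypothesis on the singularities of $Z$. This is precisely the comparison that, when combined with the identification $\mathcal{H}^0(\underline{\Omega}^p_Z)\cong\Omega^{[p]}_Z$ available for $Z$ with rational singularities, produces the result of \cite{KS} recalled in the introduction.

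First I would apply this comparison to $Z=\Sigma$, which is reduced (in fact normal, by \remarkref{rmkndb}), obtaining a natural isomorphism $\Omega^p_h|_{\Sigma}\cong\mathcal{H}^0(\underline{\Omega}^p_{\Sigma})$. Next, under any one of the hypotheses (i)--(iv), \corollaryref{graded} furnishes a natural quasi-isomorphism $\underline{\Omega}^p_{\Sigma}\cong\Omega^{[p]}_{\Sigma}$. Since $\Omega^{[p]}_{\Sigma}$ is a sheaf concentrated in degree zero, this says exactly that $\underline{\Omega}^p_{\Sigma}$ is concentrated in degree zero and that the comparison map $\delta_p\colon\mathcal{H}^0(\underline{\Omega}^p_{\Sigma})\to\Omega^{[p]}_{\Sigma}$ is an isomorphism. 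Composing the two then yields the desired natural isomorphism
$$\Omega^p_h|_{\Sigma}\cong\mathcal{H}^0(\underline{\Omega}^p_{\Sigma})\xrightarrow{\ \delta_p\ }\Omega^{[p]}_{\Sigma}.$$

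The argument is essentially formal once \corollaryref{graded} is in hand, so no serious cohomological obstacle remains; the substantive work has already been carried out in establishing that corollary. The one point I would verify with care is compatibility: that the composite above is the canonical comparison morphism between $\Omega^p_h|_{\Sigma}$ and $\Omega^{[p]}_{\Sigma}$, rather than merely an abstract isomorphism. This holds because both the Huber--J\"order isomorphism and $\delta_p$ are induced by the natural morphisms out of $\Omega^p_{\Sigma}$ (the map to $h$-differentials on the one hand, the reflexive-hull map on the other), so it suffices to trace these canonical maps through the construction. I would emphasize that, in contrast to \cite{KS}, we do \emph{not} assume $\Sigma$ has rational singularities: the isomorphism $\mathcal{H}^0(\underline{\Omega}^p_{\Sigma})\cong\Omega^{[p]}_{\Sigma}$ provided by \corollaryref{graded} holds throughout the stated range of $p$ even when the singularities of $\Sigma$ fail to be rational, which is exactly what takes the conclusion beyond the rational case.
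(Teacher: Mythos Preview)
Your proof is correct and follows essentially the same approach as the paper: both invoke \corollaryref{graded} together with the Huber--J\"order comparison $\Omega^p_h|_Z\cong\mathcal{H}^0(\underline{\Omega}^p_Z)$ (cited in the paper as \cite[Theorem 7.12]{HJ}). The paper's proof is a one-line citation of these two ingredients, while you have spelled out the composition and the naturality check, but the underlying argument is identical.
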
  

\begin{proof}
    This is an immediate consequence of \corollaryref{graded} and \cite[Theorem 7.12]{HJ}.
\end{proof}

\section{Higher rational singularities of secant varieties}\label{sectionrat}

As before, it is our standing assumption that $X$ is a smooth projective variety and $L$ is a 3-very ample line bundle on $X$. Here we study the dual ${\bf D}_{\Sigma}(\underline{\Omega}_{\Sigma}^{p})$. In particular, we prove \theoremref{prerat}.

\subsection{Cohomology of the dual of \texorpdfstring{$\underline{\Omega}_{\Sigma}^{p}$}{TEXT}}
Recall that for a variety $Z$, we set $${\bf D}_Z(-):={\bf R}\mathcal{H}\textit{om}_{\mathcal{O}_Z}(-,\omega_Z^{\bullet})[-\dim Z].$$
If $Z\subset W$ is of codimension $c$, we have ${\bf D}_{W}(-)\cong{\bf D}_{Z}(-)[-c]$ for complexes of $\mathcal{O}_Z$-modules.

\begin{remark}\label{alex}
Since $L$ is 3-very ample, we have \eqref{ste}, dualizing which (with obvious modifications of wedge powers) we obtain the exact triangle:
\begin{equation}\label{dualgro}
    {\bf D}_{\Sigma}(\underline{\Omega}_X^{2n+1-k})\to {\bf D}_{\Sigma}(\underline{\Omega}_{\Sigma}^{2n+1-k})\to {\bf R}t_*\Omega_{\mathbb{P}(\mathcal{E}_L)}^k(\log\Phi)\xrightarrow{+1}.
\end{equation}
Thus, we have the isomorphisms ${\bf D}_{\Sigma}(\underline{\Omega}_{\Sigma}^{2n+1-k})\cong {\bf R}t_*\Omega_{\mathbb{P}(\mathcal{E}_L)}^k(\log\Phi)$ for $0\leq k\leq n$; in particular
\begin{equation*}
    \mathcal{H}^i({\bf D}_{\Sigma}(\underline{\Omega}_{\Sigma}^{2n+1-k}))\cong R^it_*\Omega_{\mathbb{P}(\mathcal{E}_L)}^k(\log\Phi)\,\textrm{ for all }\, 0\leq k\leq n, i\geq 0.
\end{equation*}
\end{remark}

We first have the following results for the cohomology of the dual complex:

\begin{lemma}\label{h0}
The following assertions hold for all $0\leq k\leq 2n+1$:
\begin{enumerate}
    \item $\mathcal{H}^i({\bf D}_{\Sigma}(\underline{\Omega}_{\Sigma}^{2n+1-k}))\cong R^it_*\Omega_{\mathbb{P}(\mathcal{E}_L)}^k(\log\Phi)\,\textrm{ for all }\, 0\leq i\leq n-1$.
    \item If $i\geq n+2$, then 
    $\mathcal{H}^i({\bf D}_{\Sigma}(\underline{\Omega}_{\Sigma}^{2n+1-k}))=0$.
\end{enumerate}
\end{lemma}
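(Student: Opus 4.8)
The plan is to read off both assertions from the long exact sequence of cohomology sheaves attached to the exact triangle \eqref{dualgro} of \remarkref{alex},
\begin{equation*}
    {\bf D}_{\Sigma}(\underline{\Omega}_X^{2n+1-k})\to {\bf D}_{\Sigma}(\underline{\Omega}_{\Sigma}^{2n+1-k})\to {\bf R}t_*\Omega_{\mathbb{P}(\mathcal{E}_L)}^k(\log\Phi)\xrightarrow{+1}.
\end{equation*}
The cohomology sheaves of the third term are exactly the $R^it_*\Omega_{\mathbb{P}(\mathcal{E}_L)}^k(\log\Phi)$ that appear in the statement, so the heart of the matter is to pin down the cohomology of the left-hand term ${\bf D}_{\Sigma}(\underline{\Omega}_X^{2n+1-k})$. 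The main claim I would isolate is that this term is concentrated in the single cohomological degree $n+1$.

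To see this, recall that $X$ is smooth, so $\underline{\Omega}_X^{2n+1-k}\cong\Omega_X^{2n+1-k}$ lives in degree zero and vanishes as soon as $2n+1-k>n$, i.e.\ for $k\leq n$. Since $X\subset\Sigma$ has codimension $n+1$ by \propositionref{stronglog}, the codimension-shift identity ${\bf D}_{\Sigma}(-)\cong {\bf D}_X(-)[-(n+1)]$ for complexes supported on $X$ yields
\begin{equation*}
    {\bf D}_{\Sigma}(\underline{\Omega}_X^{2n+1-k})\cong {\bf D}_X(\Omega_X^{2n+1-k})[-(n+1)]\cong \Omega_X^{k-n-1}[-(n+1)],
\end{equation*}
where the last step uses that on the smooth $n$-fold $X$ one has ${\bf D}_X(\Omega_X^j)\cong (\Omega_X^j)^{\vee}\otimes\omega_X\cong\Omega_X^{n-j}$ in degree zero, taken with $j=2n+1-k$. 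Thus $\mathcal{H}^i({\bf D}_{\Sigma}(\underline{\Omega}_X^{2n+1-k}))$ can be nonzero only for $i=n+1$ (and the whole term vanishes when $k\leq n$, consistently with the isomorphism already recorded in \remarkref{alex} in that range).

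I would then combine this with the fibre-dimension bound: the fibres of $t$ have dimension at most $n$ (those over $x\in X$ being $F_x\cong\mathrm{Bl}_xX$), so $R^it_*\Omega_{\mathbb{P}(\mathcal{E}_L)}^k(\log\Phi)=0$ for all $i\geq n+1$. Feeding both facts into the long exact sequence gives the result: for $0\leq i\leq n-1$ the two neighbouring terms $\mathcal{H}^i$ and $\mathcal{H}^{i+1}$ of ${\bf D}_{\Sigma}(\underline{\Omega}_X^{2n+1-k})$ vanish (since $i,i+1\leq n<n+1$), so $\mathcal{H}^i({\bf D}_{\Sigma}(\underline{\Omega}_{\Sigma}^{2n+1-k}))\cong R^it_*\Omega_{\mathbb{P}(\mathcal{E}_L)}^k(\log\Phi)$, which is (1); and for $i\geq n+2$ both the contribution $\mathcal{H}^i({\bf D}_{\Sigma}(\underline{\Omega}_X^{2n+1-k}))$ and the term $R^it_*\Omega_{\mathbb{P}(\mathcal{E}_L)}^k(\log\Phi)$ vanish, forcing $\mathcal{H}^i({\bf D}_{\Sigma}(\underline{\Omega}_{\Sigma}^{2n+1-k}))=0$, which is (2).

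There is no serious obstacle here; the only point that needs care is the precise degree in which ${\bf D}_{\Sigma}(\underline{\Omega}_X^{2n+1-k})$ is supported. It is exactly this degree $n+1=\codim_{\Sigma}X$, together with the adjacent degree $n$, that the argument leaves uncontrolled, which is precisely why the two ranges in the statement stop at $i=n-1$ and resume at $i=n+2$, and why the intermediate degrees $i\in\{n,n+1\}$ (where the Hodge cohomology of $X$ enters) must be handled separately.
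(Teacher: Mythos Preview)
Your argument is correct and follows essentially the same route as the paper: both compute ${\bf D}_{\Sigma}(\underline{\Omega}_X^{2n+1-k})\cong\Omega_X^{k-n-1}[-(n+1)]$ via the codimension shift and smoothness of $X$, use the fibre-dimension bound to kill $R^it_*$ for $i\geq n+1$, and then read off (1) and (2) from the long exact sequence attached to the triangle \eqref{dualgro}. Your write-up is in fact slightly more explicit than the paper's in tracking which adjacent cohomology sheaves must vanish at each step.
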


\begin{proof} We start by observing the following consequence of the smoothness of $X$: 
\begin{equation}\label{hid1}
{\bf D}_{\Sigma}(\underline{\Omega}_X^{2n+1-k})\cong {\bf D}_X(\underline{\Omega}_X^{2n+1-k})[-n-1]\cong \begin{cases} 
      \Omega_X^{k-n-1}[-n-1] & \textrm{if }\, k\geq n+1; \\
      0 & \textrm{otherwise}.
   \end{cases}
\end{equation}   
Thus
\begin{equation}\label{hid}
    \mathcal{H}^i({\bf D}_{\Sigma}(\underline{\Omega}_{X}^{2n+1-k}))\cong \begin{cases} 
      \Omega_X^{k-n-1} & \textrm{if }\, i=n+1\, \textrm{ and }\, k\geq n+1; \\
      0 & \textrm{otherwise}.
   \end{cases}
\end{equation}
Consequently, we obtain (1) from \eqref{dualgro}. Further, $R^it_*\Omega_{\mathbb{P}(\mathcal{E}_L)}^k(\log\Phi)=0$ for $i\geq n+1$, whence by \eqref{dualgro} we conclude that for all $0\leq k\leq 2n-1$, we have $$\mathcal{H}^i({\bf D}_{\Sigma}(\underline{\Omega}_X^{2n+1-k}))\cong\mathcal{H}^i({\bf D}_{\Sigma}(\underline{\Omega}_{\Sigma}^{2n+1-k}))\,\textrm{ for all }\, i\geq n+2.$$
The conclusions now follow from \eqref{hid}.
\end{proof}

We end this section by showing that we can say more about $\mathcal{H}^0(\tau_k)$, a fact that will not be used in the sequel. Recall that we have the natural map $$\tau_k:\underline{\Omega}_{\Sigma}^k\to {\bf D}_{\Sigma}(\underline{\Omega}_{\Sigma}^{2n+1-k}).$$ The induced map $\mathcal{H}^0(\tau_k)$ via the isomorphism of \lemmaref{h0} (1) can be identified with $$\phi'_k\circ\beta_k: \mathcal{H}^0(\underline{\Omega}_{\Sigma}^k)\to t_*\Omega_{\mathbb{P}(\mathcal{E}_L)}^k(\log\Phi)$$ where $\phi_k:t_*\Omega^k_{\mathbb{P}(\mathcal{E}_L)}\hookrightarrow{j_{U_X}}_*\Omega_{U_X}^k$ is the composition of the two inclusions $\phi'_k$, $\phi''_k$ described in the following diagram:
\begin{equation*}
    \begin{tikzcd}
    t_*\Omega^k_{\mathbb{P}(\mathcal{E}_L)}\arrow[r, hook,"\phi'_k"]\arrow[rr, swap, "\phi_k", bend right=18] & t_*\Omega^k_{\mathbb{P}(\mathcal{E}_L)}(\log\Phi)\arrow[r, hook, "\phi''_k"] & {j_{U_X}}_*\Omega_{U_X}^k
\end{tikzcd}
\end{equation*}
In particular, when $\Sigma$ is normal, the maps $\delta_k:\mathcal{H}^0(\underline{\Omega}^k_{\Sigma})\to\Omega^{[k]}_{\Sigma}$ in \eqref{defdelk} is the composition of $\mathcal{H}^0(\tau_k)$ and an inclusion.

\begin{remark}\label{comp}
    We observe that
    \begin{itemize}
        \item[(a)] $\mathcal{H}^0(\tau_k)$ is an isomorphism if and only if $\beta_k$ and $\phi'_k$ are isomorphisms.
    \item[(b)] $\phi_k$ is an isomorphism if and only if $\phi'_k$ and $\phi''_k$ are isomorphisms.
    \end{itemize} 
    Also recall that if $\Sigma$ is normal, then $\delta_k$ is an isomorphism if and only if $\beta_k$ and $\phi_k$ are isomorphisms.
    In particular, if $\Sigma$ is normal and $\delta_k$ is an isomorphism, then $\mathcal{H}^0(\tau_k)$ is also an isomorphism.
\end{remark}

We have the following consequence of \theoremref{thm2.3} and \theoremref{thm2}:

\begin{proposition}\label{hod}
    Let $\mathcal{H}^0(\tau_k): \mathcal{H}^0(\underline{\Omega}_{\Sigma}^k)\to \mathcal{H}^0({\bf D}_{\Sigma}(\underline{\Omega}_{\Sigma}^{2n+1-k}))$ be the natural maps. Then the following hold:
    \begin{itemize}
        \item[(1)] Let $0\leq p\leq 2n+1$, and assume $L$ satisfies $(Q_p)$-property. Then $\mathcal{H}^0(\tau_{k})$'s are isomorphisms for all $0\leq k\leq p$ if and only if $H^k(\mathcal{O}_X)=0$ for all $1\leq k\leq p$.\footnote{Notice that this condition is vacuous when $p=0$.}
        \item[(2)] Assume $n\geq 2$ and $L$ satisfies $(Q_n)$-property. If $H^n(\mathcal{O}_X)=0$ then $\mathcal{H}^0(\tau_{2n+1})$ and $\mathcal{H}^0(\tau_{2n})$ are isomorphisms. 
        \item[(3)] Assume $n\geq 3$ and $L$ satisfies $(Q_n)$-property. Let $n+2\leq p\leq 2n-1$. If $H^{n-k}(\mathcal{O}_X)=0$ for all $0\leq k\leq 2n+1-p$, then $\mathcal{H}^0(\tau_{p})$ is an isomorphism.
    \end{itemize}
\end{proposition}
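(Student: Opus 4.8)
The plan is to read the entire statement off the two isomorphism theorems \theoremref{thm2.3} and \theoremref{thm2} through the factorization $\mathcal{H}^0(\tau_k)=\phi'_k\circ\beta_k$ and the bookkeeping recorded in \remarkref{comp}. The one structural input I would use repeatedly is that, by \remarkref{comp}, $\mathcal{H}^0(\tau_k)$ is an isomorphism exactly when both $\beta_k$ and the inclusion $\phi'_k$ are isomorphisms, and that whenever $\delta_k$ is an isomorphism so is $\phi_k=\phi''_k\circ\phi'_k$, hence so is $\phi'_k$; in particular, $\delta_k$ an isomorphism makes $\mathcal{H}^0(\tau_k)$ an isomorphism. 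So the whole argument reduces to controlling $\beta_k$ and $\phi'_k$.

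For the ``if'' direction of (1) and for parts (2) and (3) it suffices to upgrade the corresponding statement about $\delta_k$. Under the hypotheses of (1), \theoremref{thm2.3} gives that $H^k(\mathcal{O}_X)=0$ for $1\le k\le p$ makes $\delta_k$ an isomorphism for $1\le k\le p$, hence $\mathcal{H}^0(\tau_k)$ an isomorphism there; the case $k=0$ is automatic, since $\Omega^0_{\mathbb{P}(\mathcal{E}_L)}(\log\Phi)=\mathcal{O}_{\mathbb{P}(\mathcal{E}_L)}$ forces $\phi'_0=\mathrm{id}$ while $\beta_0$ is the normalization isomorphism $\mathcal{O}_\Sigma\cong t_*\mathcal{O}_{\mathbb{P}(\mathcal{E}_L)}$. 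Part (2) is identical via \theoremref{thm2}(1) applied to $k\in\{2n,2n+1\}$. For part (3) I would first rewrite the hypothesis: $H^{n-k}(\mathcal{O}_X)=0$ for $0\le k\le 2n+1-p$ is exactly $H^j(\mathcal{O}_X)=0$ for $p-n-1\le j\le n$, i.e. $H^j(\mathcal{O}_X)=0$ for all $j\ge p-n-1$ (vanishing above degree $n$ being free); since the range $n+2\le p\le 2n-1$ is empty unless $n\ge 3$, \theoremref{thm2}(2) applies and yields $\delta_p$, hence $\mathcal{H}^0(\tau_p)$, an isomorphism.

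The ``only if'' direction of (1) is the one place that needs its own argument, and it is where I expect the (mild) main subtlety: an isomorphism $\mathcal{H}^0(\tau_k)$ is genuinely weaker than an isomorphism $\delta_k$, because it detects $\phi'_k$ but not $\phi''_k$, so I cannot simply invoke the converse half of \theoremref{thm2.3}. Instead I would extract only what is actually available: by \remarkref{comp}(a), $\mathcal{H}^0(\tau_k)$ an isomorphism forces $\beta_k$ to be one for $1\le k\le p$. Feeding this into the diagram \eqref{K}, whose rows are short exact by \theoremref{lv1}, the snake lemma (its left vertical map being the identity) forces $\gamma_k$ to be an isomorphism for $1\le k\le p$; \propositionref{propgamma} then gives $h^0(\Omega^k_X)=0$, and Hodge symmetry $h^{k,0}(X)=h^{0,k}(X)$ converts this to $H^k(\mathcal{O}_X)=0$. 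The crucial point keeping this non-circular is precisely that the converse half of \theoremref{thm2.3} already rests only on $\beta_k$, which is exactly what $\mathcal{H}^0(\tau_k)$ supplies.
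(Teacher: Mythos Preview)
Your proof is correct and follows essentially the same route as the paper: parts (2) and (3) are deduced from \theoremref{thm2} via \remarkref{comp}, the ``if'' direction of (1) from \theoremref{thm2.3} via \remarkref{comp}, and the ``only if'' direction by extracting $\beta_k$ from $\mathcal{H}^0(\tau_k)$, passing through diagram \eqref{K} to $\gamma_k$, and invoking \propositionref{propgamma}. Your treatment of $k=0$ (observing $\phi'_0=\mathrm{id}$ and reading $\beta_0$ as the normalization isomorphism) is a slight variant of the paper's argument, which instead uses $q_*\mathcal{O}_\Phi\cong\mathcal{O}_X$ in \eqref{K} to see $\gamma_0$, hence $\beta_0$, is an isomorphism; both are fine.
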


\begin{proof} We first recall that $\Sigma$ is normal. 
Notice that (2) and (3) follow by \theoremref{thm2} (1), (2) and \remarkref{comp}.  

We now prove (1). Recall that we have the diagram \eqref{K} with exact rows for all $0\leq k\leq p$. Since $q_*\mathcal{O}_{\Phi}\cong\mathcal{O}_X$ by \eqref{cs}, it follows that $\beta_0$ is an isomorphism. Also, $\phi_0$ is an isomorphism as $\Sigma$ is normal whence $\phi'_0$ is an isomorphism by \remarkref{comp} (b). Thus $\mathcal{H}^0(\tau_0)$ is an isomorphism by \remarkref{comp} (a). 

We now assume $k\geq 1$, hence $p\geq 1$. If $\mathcal{H}^0(\tau_k)$ is an isomorphism for all $1\leq k\leq p$, then $\beta_k$'s whence $\gamma_k$'s are isomorphisms in the same range. Thus $H^0(\Omega_X^k)=0$ for all $1\leq k\leq p$ by \propositionref{propgamma}. The converse follows from \theoremref{thm2.3} and \remarkref{comp}. \end{proof}

\subsection{Secant varieties with pre-\texorpdfstring{$1$}{TEXT}-rational singularities.} We now prove \theoremref{prerat}. 

\begin{proof}[Proof of \theoremref{prerat}.] 
Observe that by \remarkref{alex} 
\begin{equation}\label{ander}
    \mathcal{H}^i({\bf D}_{\Sigma}(\underline{\Omega}_{\Sigma}^{2n+1-k}))\cong R^it_*\Omega_{\mathbb{P}(\mathcal{E}_L)}^k(\log\Phi)\,\textrm{ for all }\, 0\leq k\leq 1, i\geq 0.
\end{equation}

Assume (1) holds. Then by \eqref{ander}, we have $R^1t_*\Omega_{\mathbb{P}(\mathcal{E}_L)}^1(\log\Phi)=0$. Using the restriction sequence and \theoremref{lv1}, we obtain that $$R^1t_*\Omega_{\mathbb{P}(\mathcal{E}_L)}^1(\log\Phi)\cong R^1q_*\Omega_{\mathbb{P}(\mathcal{E}_L)}^1(\log\Phi)|_{\Phi}=0.$$ The short exact sequence $$0\to\Omega_{\Phi}^1\to \Omega_{\mathbb{P}(\mathcal{E}_L)}^1(\log\Phi)|_{\Phi}\to\mathcal{O}_{\Phi}\to 0$$ along with the vanishings above implies that the resulting map $q_*\mathcal{O}_{\Phi}\to R^1q_*\Omega_{\Phi}^1$ is surjective. But $$\textrm{rank}(q_*\mathcal{O}_{\Phi})=1\,\textrm{ and }\, \textrm{rank}(R^1q_*\Omega_{\Phi}^1)=\begin{cases}
    nh^{1,0}(X)+h^{1,1}(X)+1 & \textrm{ if } n\geq 2;\\
    nh^{1,0}(X)+h^{1,1}(X) & \textrm{ if } n=1
\end{cases}$$
where the second equality follows from \lemmaref{ext}. Thus, we conclude that $n=1$ and $(X,L)$ is a rational normal curve of degree $\geq 3$.

Now we prove the converse and assume $(X,L)$ is a rational normal curve of degree $\geq 3$. 
We claim that 
\begin{equation}\label{ratneed}
    R^it_*\Omega_{\mathbb{P}(\mathcal{E}_L)}^k(\log\Phi)\,\textrm{ for all }\, 0\leq k\leq 1, i\geq 1.
\end{equation} Notice that \eqref{ratneed} holds for $k=0$ by \cite[Corollary 1.5]{CS18}. Since \eqref{ratneed} holds for $i\geq 2$, it is enough to show \eqref{ratneed} for $i=k=1$. We first show that \begin{equation}\label{ratneed'}
    H^1(\Omega^1_{\mathbb{P}(\mathcal{E}_L)}(\log\Phi)|_{F_x})=0\,\textrm{ for all }\, x\in X.
\end{equation} 
Notice that in this case, $F_x\cong\mathbb{P}^1$, $\Phi\cong\mathbb{P}^1\times\mathbb{P}^1$ and one sees easily that $h^1(\Omega^1_{\Phi}|_{F_x})=h^1(\Omega_{F_x}^1)=1$. Passing to the cohomology of the exact sequence 
\begin{equation}\label{ex1p1}
    0\to{\Omega}^1_{\Phi}|_{F_x}\to \Omega^1_{\mathbb{P}(\mathcal{E}_L)}(\log\Phi)|_{F_x}\to\mathcal{O}_{F_x}\to 0,
\end{equation}
 and by \eqref{cbs}, we obtain the composite map 
\begin{equation}\label{connecting}
    H^0(\mathcal{O}_{F_x})\to H^1(\Omega_{\Phi}^1|_{F_x})\to H^1(\Omega_{F_x}^1)
\end{equation}
Now we observe that the composition of the maps \eqref{connecting} sends $1\in H^0(\mathcal{O}_{F_x})$ to the cohomology class $c_1(\mathcal{O}_{\mathbb{P}(\mathcal{E}_L)}(\Phi)|_{F_x})\in H^1(\Omega_{F_x}^1)$ (see the diagram \eqref{omega}), whence it is injective since $c_1(\mathcal{O}_{\mathbb{P}(\mathcal{E}_L)}(\Phi)|_{F_x})$ is not cohomologically trivial by \eqref{0} as $L(-2x)$ is ample (since deg$(L)\geq 3$).
Consequently, the connecting map $H^0(\mathcal{O}_{F_x})\to H^1(\Omega_{\Phi}^1|_{F_x})$ obtained from \eqref{ex1p1} is injective. Thus, \eqref{ratneed'} follows by passing to the cohomology of \eqref{ex1p1} as $H^1(\mathcal{O}_{F_x})=0$. Recall that by \claimref{cl1}, in order to show \eqref{ratneed} it is enough to prove that $$H^1(\Omega^1_{\mathbb{P}(\mathcal{E}_L)}(\log\Phi)|_{F_x}\otimes b_x^*(mL)(-2mE_x))=0\,\textrm{ for all }\, m\geq 0, x\in X.$$ This holds for $m=0$ by \eqref{ratneed'}, and for $m\geq 1$ this is a consequence of $$H^i(\Omega_{\Phi}^q|_{F_x}\otimes b_x^*(mL)(-2mE_x))=0\,\textrm{ for all }\, m\geq 1,x\in X, q=0,1$$ via \claimref{cl2}, which is easy to see as the degree of the curve is $\geq 3$ (or use \claimref{cl3}). 
\end{proof}

\begin{proof}[Proof of \corollaryref{hodgesym}]
Since $\dim(\Sigma)=3$, it is enough to show \eqref{hdb} when $0\leq p\leq 1$. This is an immediate consequence of \theoremref{prerat} and \cite[Corollary 4.1]{SVV}.
\end{proof}

\bibliography{SV.bbl}

\end{document}